\theoremstyle{definition}
\newtheorem{lemma}{Lemma}
\newtheorem{thm}{Theorem}
\newtheorem{rem}{Remark}
\newtheorem{prop}{Proposition}
\author{Glal Bacho}
\thanks{Glal Bacho: Lehrstuhl für Angewandte Analysis, RWTH Aachen University, Kreuzherrenstr. 2, 52062 Aachen, Germany. Email: \texttt{bacho@math1.rwth-aachen.de}. Phone: +49 241 80-94584.}
\author{Christof Melcher}
\thanks{Christof Melcher: Lehrstuhl für Angewandte Analysis, RWTH Aachen University, Kreuzherrenstr. 2, 52062 Aachen, Germany. Email: \texttt{melcher@rwth-aachen.de}. Phone: +49 241 80-94585.}
\title[Bubbling analysis of bimeron configurations]{Bubbling analysis of bimeron configurations}
\renewcommand{\d}[1]{\mathrm{d}#1}
\newcommand{\R}{\mathbb{R}}
\newcommand{\C}{\mathbb{C}}
\newcommand{\Z}{\mathbb{Z}}
\newcommand{\N}{\mathbb{N}}
\renewcommand{\S}{\mathbb{S}}
\renewcommand{\Re}{\operatorname{Re}}
\renewcommand{\Im}{\operatorname{Im}}
\newcommand{\tr}{\operatorname{tr}}
\newcommand{\T}{\mathbb{T}}
\newcommand{\D}{\mathbb{D}}
\newcommand{\e}[1]{\bs{\hat{e}}_{#1}}
\newcommand{\m}{\boldsymbol{m}}
\renewcommand{\c}{\boldsymbol{c}}
\newcommand{\h}{\boldsymbol{h}}
\newcommand{\tm}{\boldsymbol{u}}
\newcommand{\tms}{u}
\newcommand{\bm}{\overline{\m}}
\newcommand{\bms}{\overline{m}}
\newcommand{\bs}[1]{\boldsymbol{#1}}
\newcommand{\mDMI}{\left( \begin{array}{c}
         - \nabla m_3  \\
         \nabla \cdot m
    \end{array}\right)}
\newcommand{\St}{\mathbb{S}^2}
\newcommand{\dist}{\mathrm{dist}}
\newcommand{\loc}{\mathrm{loc}}
\newcommand{\dd}[2]{\frac{\partial #1}{\partial #2}}
\newcommand{\eps}{\varepsilon}
\newcommand{\del}{\partial}
\begin{document}

\date{\today}

\begin{abstract}
Chiral symmetry breaking in magnetic thin films stabilizes new families of topological solitons that are absent in conventional anisotropic ferromagnets. Beyond the classical chiral skyrmion, which arises in uniaxial systems through the Dzyaloshinskii–Moriya interaction, dipolar configurations known as chiral bimerons emerge in easy-plane situations. On compact domains, these solitonic states appear as localized concentrations of energy embedded in an easy-plane background. The analysis, inspired by the Sacks–Uhlenbeck theory for approximate harmonic maps, combines blow-up methods with quantitative rigidity for the Möbius group to describe the structure, scaling behaviour, and asymptotic limits of such magnetic solitons in both the conformal and large-domain regimes.
\end{abstract}

\maketitle

\section{Introduction}

Topological solitons in two-dimensional field theories arise as finite-energy critical points of variational functionals whose configuration spaces possess nontrivial homotopy structure. A standard example is provided by the $O(3)$ sigma model, where one studies maps from $\mathbb{R}^{2}$ to $\S^{2}$, the unit sphere in $\mathbb{R}^{3}$, minimizing the Dirichlet energy subject to a topological constraint. For smooth configurations approaching a constant value at infinity sufficiently fast, this constraint can be formulated in terms of an integer-valued integral of the Jacobian, called the topological charge or degree $Q=\deg$. Mathematically, these finite-energy critical points correspond to harmonic maps from $\S^{2} \cong \mathbb{C} \cup \{\infty\}$ to $\S^{2}$ of a given degree. The energy-minimizing harmonic maps in each degree class are the holomorphic (or antiholomorphic) ones, and they form a smooth finite-dimensional moduli space of rational functions of the corresponding algebraic degree, which parameterizes the soliton positions, sizes, and orientations.

Solitons of unit topological charge are commonly referred to as \emph{skyrmions}, named after T.\,H.\,R. Skyrme, who introduced a sigma-type model from $\mathbb{R}^3$ to $SU(2) \cong \S^3$ with stabilizing higher-order interaction terms (the nuclear Skyrme model) as a nonlinear field theory admitting topological soliton solutions, see \cite{skyrme1961non}. Confirming the existence of nuclear skyrmions occuring as energy minimizers in non-trivial homotopy classes by means of the direct method of the calculus of variations is a prominent  instant for the concentration-compactness principle, see, e.g., \cite{esteban2004existence, lin2004existence, lin2007energy}.

Fractional topological charges can also appear. The term \emph{meron} (from the Greek \emph{meros} \foreignlanguage{greek}{μέρος}, ``part'') denotes a field configuration carrying half the topological charge of a skyrmion. Merons were first introduced in Yang–Mills theory as half-instanton solutions with fractional topological charge \cite{Allan1977quarkconfinement}, see \cite{Glimm1978} for a more rigorous treatment. These ideas were subsequently adapted to two-dimensional O(3) sigma models and magnetic systems, see \cite{gross1978}. On the other hand, such configurations naturally arise, for example, in the setting of harmonic maps from the unit disk $\D \subset \mathbb{R}^2$ to the sphere $\S^2$, when one prescribes boundary data that trace the equator of $\S^2$ exactly once and that select either the upper or lower hemisphere as target. The resulting map covers only half of $\S^2$, and in this precise sense it represents a \emph{half-skyrmion} or \emph{meron} configuration. Uniqueness in each polarity class (upper or lower hemisphere) is an instance of the seminal work by Jäger and Kaul \cite{jager1979uniqueness, kaul1983rotationally}. 

More generally, for such maps $\m = (m_1, m_2, m_3): \D \to \S^2$, a well-studied class of variational problems motivated by thin-film micromagnetics is concerned with Ginzburg--Landau type energies
\begin{equation} \label{eq:GL}
E_{0,\eps}(\m) = \frac{1}{2} \int_{\D} |\nabla \m|^2 + \left( \frac{m_3}{\eps} \right)^2 \, \mathrm{d}x
\end{equation}
and minimizers subject to certain Dirichlet boundary conditions in the form of equator maps. Depending on the winding number of the boundary maps, the system develops magnetic vortices (which we may alternatively call merons) that asymptotically ($\eps \ll 1$) interact according to a renormalized Coulomb-type energy, as in the $\mathbb{C}$-valued case introduced in \cite{BBH}. The static case has been examined in \cite{LinHang2001PlanarFerro}, and the dynamics in \cite{KMMS, kurzke2014vortex}.
A somewhat degenerate situation is the case of constant boundary conditions: no non-trivial equilibria are possible beyond the constant absolute energy minimizer which, for 
finite $\eps>0$, follows from the Pohozaev identity
 \begin{equation} \label{eq:Pohozaev}
\int_{\D} \left( \frac{m_3}{\eps}  \right)^2  \, \mathrm{d}x = 
 \frac{1}{2} \int_{\S^1} \left| \dd{\m}{\tau} \right|^2 -\left| \dd{\m}{\nu} \right|^2 + \left( \frac{m_3}{\eps}  \right)^2  \, \mathrm{d}s
\end{equation}
and the maximum principle, 
and from a conformality argument in the case $\eps = \infty$ of harmonic maps,  cf. \cite{lemaire1978applications, Brezis1983harmonic, helein2002harmonic}.

\subsubsection*{Dzyaloshinskii--Moriya interaction}

A recent approach to stabilizing nontrivial magnetic equilibria introduces symmetry-breaking interaction terms that are linear combinations of the so-called \emph{Lifshitz invariants},
\[
m_j \nabla m_k - m_k \nabla m_j,
\]
which arise in models of nematic liquid crystals and various spin--orbit-coupled condensed-matter systems. In chiral magnets, these invariants represent the antisymmetric exchange known as the \emph{Dzyaloshinskii--Moriya interaction} (DMI). Having the structure of a differential one-form coupling, the DMI is inherently sensitive to orientation: it breaks independent rotational symmetries in both domain and target, favors modulated spin textures, and lowers the total energy by selecting a preferred sense of rotation, or chirality, of the magnetization field.
Writing $\m=(m,m_3)$, a specific form of DMI relevant for magnetic thin films is given by
\begin{equation} \label{eq:DMI}
e_{\mathrm{DMI}}(\m) = \frac{1}{2}\big( m_3 (\nabla \cdot m) - (m \cdot \nabla) m_3 \big).
\end{equation}
Since
\[
e_{\mathrm{DMI}}(\m) = (\nabla \cdot m)\, m_3 - \tfrac{1}{2}\nabla \cdot (m\, m_3),
\]
the divergence term can typically be discarded under suitable boundary conditions such as $m=0$, $m_3=0$, or periodicity. The remaining term energetically favors, in an axisymmetric setting, hedgehog or anti-hedgehog configurations of $m$ depending on the sign of $m_3$.
Another important variant, occurring in cubic helimagnets and cholesteric liquid crystals, 
is the \emph{helicity} form
\begin{equation} \label{eq:helicity}
e_{\mathrm{DMI}}(\m) = \m \cdot (\nabla \times \m),
\end{equation}
which promotes whirling spin structures and spiralization. It agrees with the well-known helicity functional, which plays a significant role in various branches of mathematical physics, particularly in topological fluid dynamics. Perturbing the harmonic map problem by helicity yields an optimality condition of Beltrami type, $\nabla \times \m + \m = 0$, whose unimodular solutions correspond to one-dimensional helical states (see~\cite{melcher2014chiral} and references therein). 
Finally, DMI-type interactions can also emerge geometrically as curvature-induced effects, revealed naturally through Cartan’s moving frame calculus~\cite{melcher2019curvature}. In particular, curvature-driven DMI on spherical surfaces with normal anisotropy gives rise to novel saddle-point configurations~\cite{gustafson2025saddle}, which can be interpreted as precursors of chiral skyrmions discussed in the following paragraph.

\subsubsection*{Chiral skyrmions}
When sufficiently strong uniaxial anisotropy or Zeeman coupling is present, thereby defining a vacuum state at infinity, the DMI contribution is balanced by the Dirichlet energy, ensuring positivity of the total energy. In this ferromagnetic regime, isolated \emph{chiral skyrmions} arise as local energy minimizers of topological degree $\pm1$, depending on the orientation near infinity. The key element of the stabilization mechanism, and the foundation of the existence theory, is that DMI lowers the skyrmion energy strictly below the critical threshold $4\pi$, the energy of a unit-degree harmonic bubble. This enables a variational existence proof in the spirit of Brezis and Coron~\cite{Brezis1983harmonic}, first established in~\cite{melcher2014chiral} (see also~\cite{hoffmann2017antiskyrmions} for the possible stabilization of anti-skyrmions).  
Rotational invariance of the functional about the anisotropy axis suggests that minimizing skyrmions should retain axisymmetry, a question that remains among the most prominent open problems in the field. Spectral stability and local minimality of axisymmetric skyrmions within the symmetry-breaking energy space were established in~\cite{LiMelcherStability, gustafson2021co}.  
The \emph{conformal limit} in the regime of small DMI, introduced in~\cite{doring2017compactness}, provides a systematic framework for extracting universal information about the structure and energetics of chiral skyrmions in the regime of vanishing core size, where the conformal Dirichlet interaction dominates. In this limit, a symmetric harmonic bubble of degree $\pm1$ arises that represents the skyrmion core. A central advance in this context is the quantitative rigidity result for unit-degree harmonic maps (cf. \eqref{eq:rigidity} below) obtained by Bernand-Mantel, Muratov, and Simon in~\cite{MuratovSimonRigidity}, which yields the requisite $\Gamma$ equivalence; that is, matching upper bounds and ansatz-free lower bounds for the energy asymptotics. This characterization identifies both the universal limiting profile and the asymptotic scaling of the energy and skyrmion core (or radius), as established for axisymmetric chiral skyrmions in~\cite{komineas2020profile, gustafson2021co}.  Finally, the Dirichlet problem for chiral skyrmions under confinement is examined in~\cite{monteil2023magnetic}, where skyrmions are shown to concentrate as point-like states, and their positions and sizes are determined by a boundary-repelling renormalized energy.

\subsubsection*{Chiral Bimerons}
Consider a disk-shaped chiral easy-plane ferromagnet governed by the energy functional
(writing $\m=(m,m_3)$)
\begin{equation}\label{eq:full_energy}
E_{\lambda,\varepsilon}(\mathbf{m}) =
\int_{\mathbb{D}} 
\frac{|\nabla \mathbf{m}|^2}{2} 
+ \lambda \, (\nabla \cdot m)\, \frac{m_3}{\varepsilon}  
+ \frac{1}{2} \left( \frac{m_3}{\varepsilon} \right)^2 \, \mathrm{d}x ,
\end{equation}
which extends~\eqref{eq:GL} by inclusion of a properly scaled DMI term 
$\tfrac{\lambda}{\varepsilon} (\nabla \cdot m) m_3$ of the form \eqref{eq:DMI}.
The imposition of a constant in-plane Dirichlet boundary condition 
\begin{equation} \label{eq:BC}
\mathbf{m} = (c,0) \in \S^1 \times \{0\} \quad \text{on} \quad \mathbb{S}^1
\end{equation}
necessarily breaks the rotational symmetry of the variational problem.
This symmetry breaking gives rise to configurations consisting of a bound pair of merons 
with opposite in-plane winding and opposite polarity, commonly referred to as (chiral) \emph{bimerons} 
see, e.g.,~\cite{bachmann2023meron, Gao2019topologicalmeron, gobel2021beyond, Go2019MagneticBimerons, Lin2015fractionalskyrmion}.

In the vortex language, a bimeron represents a dipolar structure, that is, a vortex--antivortex pair 
which does not occur in the unperturbed model~\eqref{eq:GL} without DMI.
Topologically, a bimeron carries the same skyrmion number as a conventional skyrmion; 
the distinction lies in the geometric arrangement of the magnetization rather than in its topological charge.
In contrast, however, the invariance of~\eqref{eq:full_energy} under the transformation 
$\m(x) \mapsto (m,-m_3)(-x)$
implies that, for the same boundary condition \eqref{eq:BC}, bimeron configurations of the same energy but opposite degree replicate.
When considered on a flat torus~$\T^2$, such configurations correspond to meron lattices 
as encountered in~\cite{li2020lattice} through symmetry-breaking bifurcation methods 
applied to a chiral Ginzburg--Landau model, thereby connecting with physical observations 
reported in~\cite{yu2018transformation}.

The existence of degree $\pm 1$ local minimizers for~\eqref{eq:full_energy} can be studied in a framework analogous to the skyrmion case analyzed in~\cite{melcher2014chiral}, 
relying on a concentration--compactness argument and the subcritical energy bound $\inf E_{\lambda,\varepsilon} < 4\pi$ in topological sectors of degree $\pm 1$. 
For chiral bimerons on $\mathbb{R}^2$, a new difficulty arises from the interplay between DMI and easy-plane anisotropy when attempting to rule out vanishing. 
This challenge was elegantly addressed in the recent work of Deng, Ignat, and Lamy~\cite{DengRaduLamyConformalBimeron} (focusing on helicity-type DMI \eqref{eq:helicity}), 
who adapted the approach of~\cite{MuratovSimonRigidity}. 
Their method leverages quantitative rigidity results for the Möbius group, showing that any finite-energy map $\m:\mathbb{R}^2 \to \mathbb{S}^2$ of unit degree is, in a precise functional sense, close to a Möbius map $\boldsymbol{\Psi}$ after stereographic projection. 
The rigidity result~\cite{MuratovSimonRigidity} (see also \cite{ToppingRigidity}) takes the form
\begin{equation} \label{eq:rigidity}
\int_{\mathbb{R}^2} \bigl| \nabla (\m - \boldsymbol{\Psi}) \bigr|^2 \, \mathrm{d}x 
\;\lesssim\; \frac{1}{2} \int_{\mathbb{R}^2} |\nabla \m|^2 \, \mathrm{d}x - 4\pi.
\end{equation}
By adapting this result to the easy-plane setting and employing optimal Möbius maps, they derived sharp lower bounds, controlled the scale of minimizers in the small-DMI regime, and rigorously described the convergence of energy-minimizing bimerons to conformal maps as the DMI parameter tends to zero.\\

Here we examine chiral bimeron configurations on compact domains, 
specifically on the unit disk $\D$ with Dirichlet boundary conditions~\eqref{eq:BC}, 
and on the flat torus $\T^2$. 
In Section~\ref{sec:energybounds}, we present the corresponding lower and upper energy bounds, 
based on constructing suitable trial fields on the disk to ensure the compactness of minimizing sequences. 
After stereographic projection, the construction employs the bimeron prototype
\begin{equation} \label{eq:ansatz}
f(z) = c\,\frac{z - a}{z + a}, \qquad a \in \R^+,
\end{equation}
which carries a vortex at $z = a$ and an antivortex at $z = -a$, and is smoothly cut off in an annular region. 
These constructions provide configurations with sufficiently small energy to guarantee the compactness of minimizing sequences. 
Consequently, we obtain the existence result (Theorem~\ref{thm:existence})
also valid in the periodic case.

\medskip

In Section~\ref{sec:conformallimit} we present an approach to the conformal limit, 
embedding it into the framework of bubbling for approximate harmonic maps 
as developed in~\cite{DingTian1995Approximateharmonicmap, qing1997bubbling, lin1998energy} 
and extended to include Dirichlet boundaries in~\cite{Jost2019NoNeckDirichlet}. 
The blow-up procedure fits naturally within this framework for maps with $L^2$-bounded tension fields, 
where no-neck results—both for energy and oscillation—remain valid even under Dirichlet boundary conditions. 
The decay of neck energy ensures compactness and controls concentration, providing a natural setting 
for the analysis of parameter-dependent regimes. 
Theorem~\ref{thm:bubbling} shows that bimerons in easy-plane chiral films are stabilized 
by the Dzyaloshinskii--Moriya interaction: they exist for every fixed $0 < |\lambda| < 1$, and, 
in the weak-chirality limit $\lambda \to 0$, their cores converge (after rescaling) 
to a harmonic profile which, after translation and rotation, is stereographically represented by 
\eqref{eq:ansatz}.

\medskip

In Section~\ref{sec:Energygap} we introduce and analyze an auxiliary problem arising as an effective easy-plane limit of~\eqref{eq:full_energy} as $\eps \to 0$. 
Suppressing the out-of-plane component leads to the anisotropic harmonic map functional
\[
E_\lambda(m) = \int_{\T^2} \frac{1}{2}|\nabla m|^2 - \frac{\lambda^2}{2} (\nabla \cdot m)^2 \,\mathrm{d}x,
\]
for maps $m:\T^2 \to \S^1$. 
Comparable energies, obtained by relaxing the $\S^1$ constraint via a Chern-Simons-Higgs potential, 
arise in models of nematic--isotropic transitions with highly disparate elastic constants; see~\cite{golovaty2020model}. 
Critical points satisfy a quasilinear perturbation of the classical harmonic map equation 
$\Delta m + |\nabla m|^2 m = 0$. 
For $0 < |\lambda| < 1$, the system remains uniformly elliptic, although the ellipticity constants and the structure of the principal part depend on the direction of $m$, 
and the additional term involves the tangential projection of $\nabla(\nabla \cdot m)$ onto $\T_m \S^1$. 
In the critical case $\lambda^2 = 1$, however, the energy supports singular vortex structures. 
Combining a Pohozaev-type identity with a Sacks--Uhlenbeck perturbation scheme, adapted to the anisotropic setting, 
yields the energy gap stated in Theorem~\ref{thm:energygap}, which plays a central role in the large-domain analysis.

\medskip

In Section~\ref{sec:eps0} we finally consider the large-domain limit $\eps \to 0$ of unit-degree minimizers of \eqref{eq:full_energy} on the torus, obtained in Theorem~\ref{thm:existence} for fixed DMI coupling $0 < |\lambda| < \lambda_0 \ll 1$, thereby producing bimeron configurations on $\R^2$ (Theorem~\ref{thm:R2existence}). 
The energy gap established in Theorem~\ref{thm:energygap} constrains the behavior of minimizers away from the skyrmion core, ensuring that the far field relaxes to a uniformly easy-plane state and that all nontrivial topology remains confined to a localized core region. 
Uniform $\eps$-independent regularity bounds are obtained via higher-order energy estimates combined with a variant of Schoen’s trick. 
The intrinsic core scale $R_{\lambda,\eps}$ of a minimizer $\m_{\lambda,\eps}$ satisfies the asymptotic relation $R_{\lambda,\eps} \sim \eps$. 
Crucially, as a consequence of quantitative rigidity, the neck energy between the core and the far field decays, providing a robust alternative to the concentration-compactness principle.

\section{Energy bounds and existence}\label{sec:energybounds}
Crucial ingredients for compactness of minimizing sequences and existence results 
are energy bounds from below and above.  
A starting point is the classical topological lower bound
\begin{equation}\label{eq:topo-lower}
    \frac{1}{2} \int_{\D} |\nabla \m|^2\, \mathrm{d}x 
    \;\geq\; 4\pi\, |\deg \m|
\end{equation}
for finite-energy maps $\m : \D \to \S^2$ with boundary values \eqref{eq:BC},
see e.g.~\cite{Brezis1983harmonic}.  
For such maps, the Brouwer degree is well-defined and given by
\begin{equation}\label{eq:degree}
    \deg \m 
    = \frac{1}{4\pi}\int_{\D} 
        \m \cdot (\partial_1 \m \times \partial_2 \m)\, \mathrm{d}x.
\end{equation}
Hence the configuration space 
\[
H^1_c(\D; \S^2) =\{ \m \in  H^1(\D;\S^2): m-c \in H^1_0(\D; \R^2)\} 
\]
decomposes into closed topological sectors 
labeled by $\deg \m \in \mathbb{Z}$.
Taking into account the estimate
\begin{equation}\label{eq:lambda-ineq}
    \Bigl| \lambda (\nabla \!\cdot\! \m)\, \frac{m_3}{\varepsilon} \Bigr| 
    \;\le\;
    \frac{\lambda^2}{2}|\nabla \m|^2 
    + \frac{m_3^2}{2\varepsilon^2},
\end{equation}
one easily deduces the following lower bounds for the full energy 
$E=E_{\lambda, \eps}$ (cf.~\eqref{eq:full_energy}) with $0<|\lambda|<1$ and 
$\m \in H^1_c(\D;\S^2)$:
\begin{equation}\label{prop:toplowerbound}
    E(\m) 
    \;\geq\; \frac{1-\lambda^2}{2} \int_{\D} |\nabla \m|^2 \, \mathrm{d}x,
    \qquad 
    E(\m) 
    \;\geq\; \frac{1-|\lambda|}{2} 
    \int_{\D} \left( |\nabla \m|^2 + \frac{m_3^2}{\varepsilon^2} \right) 
    \mathrm{d}x.
\end{equation}
In particular, the topological lower bound \eqref{eq:topo-lower} implies
\begin{equation}\label{eq:topo-lower-lambda}
    E(\m) 
    \;\geq\; 4\pi (1-\lambda^2)\, |\deg \m|.
\end{equation}

Exact topological lower bounds of the form 
$E(\m) \ge 4\pi\, |\deg \m|$
can be obtained in models for chiral skyrmions, 
where the easy-plane anisotropy is replaced by an easy-axis term 
or combined with a Zeeman interaction. 
Such bounds arise naturally in gauge-field inspired formulations 
that lead to classes of exactly solvable models, 
see e.g.~\cite{schroers1995bogomol,melcher2014chiral, doring2017compactness, barton2020magnetic}. \\

We are interested in the variational problem for $E=E_{\lambda, \eps}$ (cf.~\eqref{eq:full_energy})
\begin{equation}\label{eq:variation}
    I 
    = \inf\Bigl\{ 
        E(\m) : \m \in H^1(\D;\S^2),\
        \m|_{\mathbb{S}^1} = (c,0) \in \S^1 \times \{0\},\
        \deg \m = -1
    \Bigr\}
\end{equation}
obtaining the following upper bounds in the almost conformal regime $|\lambda| \ll 1$:
\begin{prop} \label{prop:upper_bound} 
$\displaystyle{I \le 4 \pi \left(1- \frac{\lambda^2}{8|\ln \lambda|} \right) + o \left(\frac{\lambda^2}{|\ln \lambda|}\right)}$ 
as $\lambda \to 0$.
\end{prop}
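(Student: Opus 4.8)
The plan is to exhibit an explicit competitor built from the prototype~\eqref{eq:ansatz} and to optimise its internal scale. Via stereographic projection $\m\leftrightarrow w$, with $m_1+im_2=\tfrac{2w}{1+|w|^2}$ and $m_3=\tfrac{|w|^2-1}{|w|^2+1}$, I set $w=f_a(z)=c\,\tfrac{z-a}{z+a}$ and restrict the resulting map to $\D$. Since the divergence-form DMI in~\eqref{eq:full_energy} is invariant under the simultaneous rotation $\m(x)\mapsto\big(R_\phi\,m(R_{-\phi}x),\,m_3(R_{-\phi}x)\big)$, which also preserves the constant boundary datum, the infimum $I$ is independent of $c$ and I may take $c=1$. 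From $f_a(z)=1-\tfrac{2a}{z}+O(a^2/z^2)$ for $|z|\gg a$ one reads off that the competitor deviates from the equator value $(1,0)$ only by $O(a)$ near $\S^1$, so I enforce~\eqref{eq:BC} by geodesically interpolating to $(1,0)$ through a radial cut-off in a fixed annulus $\{1-\delta<|z|<1\}$. Finally I arrange $\deg\m=-1$: should the map above have degree $+1$, I apply the symmetry $\m(x)\mapsto(m,-m_3)(-x)$ from the introduction, which flips the degree while leaving each of the three energy terms---and in particular the chiral integral $\int(\nabla\cdot m)\,m_3$---unchanged.

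Next I expand $E_{\lambda,\eps}$ of this competitor in the small scale $a$. As the stereographic image of a Möbius map, the uncut field is conformal with $\tfrac12\int_{\R^2}|\nabla\m|^2=4\pi$; the Dirichlet energy lost on $\{|z|>1\}$ and the cost of the cut-off are both $O(a^2)$, so $\tfrac12\int_\D|\nabla\m|^2=4\pi+O(a^2)$. The far field gives $m_3\approx-2a\,\Re(1/z)=-2a\cos\theta/r$ on $a\ll r<1$, whence
\[
\frac{1}{2\eps^2}\int_\D m_3^2\,\d x=\frac{2\pi\,a^2|\ln a|}{\eps^2}\,(1+o(1)).
\]
For the chiral term, holomorphy of $w$ yields $\nabla\cdot m=\tfrac{4\Re(w')}{(1+|w|^2)^2}$, hence
\[
\int_\D(\nabla\cdot m)\,m_3\,\d x=a\,\kappa+O(a^2),\qquad \kappa=\int_{\R^2}\frac{4\Re(w')\,(|w|^2-1)}{(1+|w|^2)^3}\,\d x,
\]
the $O(a^2)$ collecting the tail beyond $\D$ and the cut-off annulus. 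Passing to the $w$-plane via $z=\tfrac{1+w}{1-w}$ turns $\kappa$ into $8\int_{\C}\tfrac{|w|^2-1}{(1+|w|^2)^3}\,\Re\!\big(\tfrac{1}{(1-w)^2}\big)\,\d A_w$; integrating first over $\arg w$, the angular integral of $\tfrac{1}{(1-w)^2}$ equals $2\pi$ on $\{|w|<1\}$ and $0$ on $\{|w|>1\}$, so a single radial integral remains and gives $\kappa=-2\pi$. Collecting terms (for $\lambda>0$; for $\lambda<0$ the reflected competitor restores the favourable sign, and only $\lambda^2$ enters below),
\[
E_{\lambda,\eps}(\m)=4\pi-\frac{2\pi\lambda a}{\eps}+\frac{2\pi\,a^2|\ln a|}{\eps^2}+o\!\left(\frac{\lambda a}{\eps}+\frac{a^2|\ln a|}{\eps^2}\right).
\]

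Minimising this expression over $a$, the stationarity condition $a|\ln a|=\tfrac{\lambda\eps}{2}$ gives $a_*=\tfrac{\lambda\eps}{2|\ln\lambda|}(1+o(1))$ as $\lambda\to0$ for fixed $\eps$, since $|\ln a_*|=|\ln\lambda|+O(1+\ln|\ln\lambda|)=|\ln\lambda|(1+o(1))$. Substituting $a_*|\ln a_*|=\tfrac{\lambda\eps}{2}$ into the quadratic-plus-linear balance yields
\[
I\le E_{\lambda,\eps}(\m)=4\pi-\frac{\pi\lambda^2}{2|\ln\lambda|}+o\!\left(\frac{\lambda^2}{|\ln\lambda|}\right)=4\pi\left(1-\frac{\lambda^2}{8|\ln\lambda|}\right)+o\!\left(\frac{\lambda^2}{|\ln\lambda|}\right),
\]
since $\m$ is admissible in~\eqref{eq:variation}; note that $\eps$ has dropped from the leading order.

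The main work is twofold. First, the exact values $\kappa=-2\pi$ and the logarithmic coefficient $2\pi$ are precisely what fix the constant $\tfrac18$, so the residue/mean-value computation is the crux, and one must verify carefully that \emph{every} tail and cut-off contribution is genuinely $o(a^2|\ln a|/\eps^2+\lambda a/\eps)$---this is where the fixed annulus width $\delta$ and the slow ($1/|z|$) decay of the Möbius tail have to be controlled. Second, the optimisation hides a slowly varying logarithm: I must justify that treating $|\ln a|$ as frozen while differentiating perturbs $a_*$ and the minimal value only by factors $1+o(1)$, so that the $\eps$-dependence stays confined to subleading order and the clean, $\eps$-free leading term survives.
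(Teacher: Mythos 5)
Your proposal is correct and follows essentially the same route as the paper: the bimeron ansatz \eqref{eq:ansatz} lifted through stereographic projection, a cut-off enforcing \eqref{eq:BC}, the three-term expansion $4\pi-\tfrac{2\pi\lambda a}{\eps}+\tfrac{2\pi a^{2}|\ln a|}{\eps^{2}}+o(\cdot)$, and optimisation at $a|\ln a|\sim\tfrac{\lambda\eps}{2}$, yielding the constant $\tfrac18$. The only (harmless) implementation differences are that the paper cuts off via a modified stereographic map at a variable radius $R$ with $a/R\sim\lambda/|\ln\lambda|$ and controls the annulus by Lemma~\ref{lemma:mRBounds}, whereas you interpolate in a fixed annulus near $\S^1$, and that the paper obtains the chiral coefficient $-2\pi a$ by direct polar integration (Lemma~\ref{lemma:localenergy}) rather than by your change of variables to the $w$-plane and mean-value argument, which indeed gives $\kappa=-2\pi$.
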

Importantly, this upper bound holds independently of $\varepsilon>0$. 
The crucial feature of the chiral interaction is that it can push 
the minimal energies in the non-trivial homotopy class below the critical 
threshold of $4\pi$. This is sufficient to deduce the existence of 
local energy minimizers via the concentration-compactness principle.

\begin{thm} \label{thm:existence}
For all $\eps>0$ and $0<|\lambda|<1$ the infimum of $E_{\lambda, \eps}(\m)$ in the class of finite energy fields $\m: \D \to \S^2$ with Dirichlet boundary condition 
\begin{align*}
\m|_{\S^1}=(c,0)\in \S^1\times\{0\}
\end{align*}
and $\deg \m = \pm 1$ is attained by smooth fields $\m$ with $E_{\lambda, \eps}(\m)< 4 \pi$.
\end{thm}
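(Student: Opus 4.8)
The plan is to establish existence of minimizers in the degree $\pm 1$ sectors by the direct method combined with the concentration-compactness principle, using the subcritical energy bound from Proposition~\ref{prop:upper_bound} as the decisive tool to exclude the loss of topology. By the symmetry $\m(x)\mapsto(m,-m_3)(-x)$ noted after \eqref{eq:BC}, it suffices to treat the case $\deg\m=-1$, whose infimum is $I$ from \eqref{eq:variation}.

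\medskip

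First I would fix $0<|\lambda|<1$ and $\eps>0$ and take a minimizing sequence $\m_k$ for $I$. The lower bound \eqref{prop:toplowerbound} shows $\tfrac{1-\lambda^2}{2}\int_{\D}|\nabla\m_k|^2\le E(\m_k)$, so the Dirichlet energies, and hence the $H^1$ norms (since $|\m_k|=1$), are uniformly bounded; likewise the easy-plane penalty $\int_{\D}m_3^2/\eps^2$ is controlled by the second bound. Extracting a subsequence, I obtain a weak $H^1$ limit $\m$ with $|\m|=1$ a.e., satisfying the boundary condition since the trace is weakly continuous and $H^1_c(\D;\S^2)$ is weakly closed. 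The issue is that weak convergence in $H^1$ permits both loss of energy and, more seriously, loss of degree through bubbling: a portion of the Dirichlet energy may concentrate at interior points and escape in the form of a nonconstant harmonic sphere carrying integer degree. The standard concentration-compactness dichotomy (in the spirit of \cite{Brezis1983harmonic}) then gives $E(\m)+\sum_j 4\pi|Q_j|\le I$ and $\deg\m+\sum_j Q_j=-1$, where each bubble is a harmonic map $\S^2\to\S^2$ of degree $Q_j$ costing at least $4\pi|Q_j|$ of Dirichlet energy (the DMI and anisotropy terms contribute nonnegatively to the concentrated energy by \eqref{prop:toplowerbound}, or vanish under rescaling). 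Here the crucial point is that the bubbles see only the conformal Dirichlet energy in the blow-up limit, so each costs the full $4\pi|Q_j|$ rather than the renormalized $4\pi(1-\lambda^2)|Q_j|$.

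\medskip

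The heart of the argument, and the step I expect to be the main obstacle, is ruling out nontrivial bubbling. Suppose at least one bubble forms, so $\sum_j|Q_j|\ge 1$. Then the energy splitting yields
\begin{equation*}
I \;\ge\; E(\m) + \sum_j 4\pi|Q_j| \;\ge\; 4\pi \sum_j |Q_j| \;\ge\; 4\pi,
\end{equation*}
where I have discarded the nonnegative residual energy $E(\m)\ge 0$. But Proposition~\ref{prop:upper_bound} gives $I\le 4\pi\bigl(1-\tfrac{\lambda^2}{8|\ln\lambda|}\bigr)+o(\lambda^2/|\ln\lambda|)<4\pi$ for $|\lambda|$ small, which is a contradiction. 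For general $0<|\lambda|<1$ one argues instead from $I<4\pi$ directly: since the trial field underlying Proposition~\ref{prop:upper_bound}, namely the cut-off bimeron prototype \eqref{eq:ansatz}, yields a configuration of degree $-1$ with energy strictly below $4\pi$ for every admissible $\lambda$ and $\eps$, one has $I<4\pi$ throughout, and the same splitting contradicts any bubbling. Hence no energy is lost, $\m_k\to\m$ strongly in $H^1$, the limit retains $\deg\m=-1$, and $\m$ attains $I$. The delicate technical points are the uniform-in-$\eps$ validity of the upper bound (already asserted after Proposition~\ref{prop:upper_bound}) and verifying that the concentrated bubble energy is purely the $4\pi|Q_j|$ Dirichlet contribution, which follows because the scaling-invariant Dirichlet term dominates while the penalty and DMI terms rescale away on the shrinking bubble necks.

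\medskip

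Finally, smoothness of the minimizer $\m$ follows from standard elliptic regularity for the associated Euler--Lagrange system: the minimizer is a weakly harmonic-type map with a lower-order DMI perturbation, and the boundary data $(c,0)$ are smooth, so interior and boundary regularity theory for critical points of \eqref{eq:full_energy} (the DMI term being linear in $\nabla\m$ and hence subcritical relative to the Dirichlet energy) upgrades $\m\in H^1$ to $\m\in C^\infty$. The strict inequality $E_{\lambda,\eps}(\m)<4\pi$ is inherited from $I<4\pi$. This completes the existence statement, and the degree $+1$ case is identical by the stated symmetry.
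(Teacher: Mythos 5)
Your proposal is correct and follows essentially the same route as the paper: direct method with the lower bounds \eqref{prop:toplowerbound} for compactness, concentration-compactness (the paper cites Theorem~E.1 of \cite{BrezisHarmonicmaps} and Lemma~4.3 of \cite{LionsConcentrationCompactness}) to obtain the splitting $E(\m)+4\pi\sum_k|q_k|\le I$, the subcritical bound $I<4\pi$ from the cut-off bimeron trial field to kill all bubbles, and Hélein-type regularity for the $L^2$-perturbed harmonic map system. Your observation that for general $0<|\lambda|<1$ one should invoke $I<4\pi$ directly from the trial-field construction rather than the asymptotic form of Proposition~\ref{prop:upper_bound} is a correct and welcome clarification of a point the paper leaves implicit.
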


Adapting the strategy from \cite{MuratovSimonRigidity}, sharp lower bounds, 
which significantly improve upon \eqref{eq:topo-lower-lambda},
were obtained in \cite{DengRaduLamyConformalBimeron}. In combination with the corresponding 
upper bounds, these results provide the correct energy asymptotics 
in the conformal limit $\lambda \to 0$ in the full-space situation, 
first obtained in the context of chiral skyrmions in 
\cite{komineas2020profile, komineas2023chiral, MuratovSimonRigidity, gustafson2021co}.

\subsection{Upper bound construction}
We shall focus on the case $\deg \m =-1$.
Lifting $\m = \Phi(u)$ via the stereographic map
\begin{equation} \label{eq:stereographic}
    \Phi : \R^2 \to \S^2, 
    \qquad 
    \Phi(y) = \left( \frac{2y}{|y|^2 + 1},\, \frac{|y|^2 - 1}{|y|^2 + 1} \right),
\end{equation}
to a field $u : U \to \R^2$ on an open set $U \subset \R^2$ where $\m$ avoids the north pole 
$\mathbf{e}_3 = (0,0,1)$, a straightforward computation yields the pulled-back energy density
\begin{align*}
    \tilde{e}(u)
    &= 
    2\,\frac{|\nabla u|^2}{(|u|^2 + 1)^2}
    - 2\,\frac{\lambda}{\varepsilon}\,
        \frac{\nabla \!\cdot\! u}{(|u|^2 + 1)^2}
    + \frac{1}{2\varepsilon^2}\,
        \frac{(|u|^2 - 1)^2}{(|u|^2 + 1)^2}
    + \frac{\lambda}{\varepsilon}\, 
        \nabla \!\cdot\! \frac{u}{|u|^2 + 1}.
\end{align*}

Discarding the divergence term gives the \emph{reduced energy density}
\begin{equation}\label{eq:reduced-density}
    e(u)
    :=
    2\,\frac{|\nabla u|^2}{(|u|^2 + 1)^2}
    - 2\,\frac{\lambda}{\varepsilon}\,
        \frac{\nabla \!\cdot\! u}{(|u|^2 + 1)^2}
    + \frac{1}{2\varepsilon^2}\,
        \frac{(|u|^2 - 1)^2}{(|u|^2 + 1)^2}.
\end{equation}

Note that formula~\eqref{eq:reduced-density} is to be applied on 
$\D \setminus \m^{-1}(\{\mathbf{e}_3\})$, since $|u| \to \infty$ as 
$\m \to \mathbf{e}_3$.  
In our setting, there is only one point in $\m^{-1}(\{\mathbf{e}_3\})$. 
The energy density behaves particularly well when the complex field 
$u_1 + i u_2$ is holomorphic, which greatly simplifies subsequent computations.
Indeed, setting $f = u_1 + i u_2$ and using the Cauchy--Riemann equations, 
we have
\begin{align*}
    f' 
    &= \partial_1 u_1 + i \partial_1 u_2 
     = \partial_2 u_2 - i \partial_2 u_1.
\end{align*}
Hence with $\nabla^{\perp} = (-\partial_2, \partial_1)$
\begin{equation*}
    2|f'|^2 = |\nabla u|^2,
    \quad \text{and} \quad  
    f' = \tfrac{1}{2}\bigl(\nabla \!\cdot\! u 
    + i\, \nabla^{\perp}\! \cdot u \bigr).
\end{equation*}

\begin{lemma}\label{lemma:densityholomorph}
For $f = u_1 + i u_2$ holomorphic 
\eqref{eq:reduced-density} take on the form
\begin{equation*}
    e(u)
    = 4\,\frac{|f'(z)|^2}{(|f(z)|^2 + 1)^2}
      - 4\,\frac{\lambda}{\varepsilon}\,
        \Re\!\left( 
            \frac{f'(z)}{(|f(z)|^2 + 1)^2}
        \right)
      + \frac{1}{2\varepsilon^2}\,
        \frac{(|f(z)|^2 - 1)^2}{(|f(z)|^2 + 1)^2}.
\end{equation*}
\end{lemma}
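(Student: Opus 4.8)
The plan is to substitute the holomorphic structure into the reduced density term by term. Starting from \eqref{eq:reduced-density}, the third (anisotropy) term involves only $|u|^2 = |f|^2$, so it transfers verbatim to the claimed form with no computation required. The first term is immediate from the identity $|\nabla u|^2 = 2|f'|^2$ established just above the statement: substituting gives $2 \cdot 2|f'|^2/(|f|^2+1)^2 = 4|f'|^2/(|f|^2+1)^2$, matching the target. So the entire content of the lemma reduces to rewriting the single middle term, the DMI contribution $-2\tfrac{\lambda}{\varepsilon}(\nabla \cdot u)/(|u|^2+1)^2$.

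For that term, the key is the relation $f' = \tfrac{1}{2}(\nabla \cdot u + i\,\nabla^\perp \cdot u)$, which shows that $\nabla \cdot u = 2\Re(f')$. First I would take real parts: since $|f|^2+1$ is real-valued, the denominator $(|f|^2+1)^2$ passes through the real-part operator, so
\begin{equation*}
    -2\,\frac{\lambda}{\varepsilon}\,\frac{\nabla \cdot u}{(|f|^2+1)^2}
    = -2\,\frac{\lambda}{\varepsilon}\,\frac{2\Re(f')}{(|f|^2+1)^2}
    = -4\,\frac{\lambda}{\varepsilon}\,\Re\!\left(\frac{f'}{(|f|^2+1)^2}\right),
\end{equation*}
which is exactly the middle term of the asserted formula (here $|f|=|f(z)|$). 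Collecting the three rewritten terms yields the stated expression.

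I do not expect any genuine obstacle here, since each step is a direct algebraic substitution using identities already recorded in the excerpt. The only point requiring mild care is the bookkeeping of the factor $2\Re(f') = \nabla \cdot u$ together with the prefactor $-2\lambda/\varepsilon$ to produce the coefficient $-4\lambda/\varepsilon$, and the observation that the real-valued denominator may be pulled inside $\Re(\cdot)$. One should also note, as the surrounding text already flags, that the computation is valid on $\D \setminus \m^{-1}(\{\mathbf{e}_3\})$ where $u = \Phi^{-1}(\m)$ is well-defined and finite, so holomorphy of $f$ and all the pointwise identities make sense; this domain restriction is precisely why the divergence term was discarded in passing to \eqref{eq:reduced-density}.
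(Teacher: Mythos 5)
Your proposal is correct and follows exactly the route the paper intends: the lemma is stated as an immediate consequence of the identities $|\nabla u|^2 = 2|f'|^2$ and $f' = \tfrac{1}{2}(\nabla\cdot u + i\,\nabla^{\perp}\cdot u)$ recorded just before it, and your term-by-term substitution (in particular $\nabla\cdot u = 2\Re(f')$ and pulling the real denominator inside $\Re$) is precisely that argument.
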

We will henceforth write $e(f)$ for the right-hand side of the above identity.

\subsubsection*{Bimeron ansatz}

Consider the meromorphic ansatz \eqref{eq:ansatz} with vortices centered on the real axis and midpoint at the origin,
which corresponds to a vortex located at $(a,0)$ and an antivortex at $(-a,0)$ for some $a>0$; see, for instance, \cite{bachmann2023meron, gross1978}.  
This ansatz does not satisfy the prescribed boundary condition and therefore requires a cut-off modification, as described in Section~\ref{subsec:cut-off}.  
On the disk $D_R \subset \D$ with radius $R>0$, the energy is evaluated as a function of the parameters $a>0$ and $R>0$.  
For convenience, the constant $c$ is set equal to $1$.

\begin{lemma}\label{lemma:localenergy}
Let $f(z) = \frac{z - a}{z + a}$.  
Then the energy on $D_R$ is given by
\begin{equation*}
\int_{D_R} e(f)\, \mathrm{d}z
= 4\pi \!\left(1 - \frac{\lambda a}{2\eps}\right)
  \frac{R^2}{R^2 + a^2}
  + \pi \frac{a^2}{\eps^2}
    \!\left(
      \ln\!\frac{R^2 + a^2}{a^2}
      - \frac{R^2}{R^2 + a^2}
    \right)\!.
\end{equation*}
\end{lemma}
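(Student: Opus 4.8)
The plan is to reduce the integrand $e(f)$ from Lemma~\ref{lemma:densityholomorph} to an explicit rational function of $\Re z$ and $|z|^2$, and then integrate radially over $D_R$. First I would record the elementary ingredients. Since $f(z)=\frac{z-a}{z+a}$ we have $f'(z)=\frac{2a}{(z+a)^2}$, and because $a\in\R^+$ is real the parallelogram-type identities
\begin{equation*}
|z-a|^2+|z+a|^2 = 2\bigl(|z|^2+a^2\bigr), \qquad |z-a|^2-|z+a|^2 = -4a\,\Re z
\end{equation*}
hold. Dividing by $|z+a|^2$ they yield the two key simplifications
\begin{equation*}
|f|^2+1 = \frac{2\bigl(|z|^2+a^2\bigr)}{|z+a|^2}, \qquad |f|^2-1 = \frac{-4a\,\Re z}{|z+a|^2},
\end{equation*}
after which every power of $|z+a|$ cancels against the numerator $|f'|^2$ or $(|f|^2\pm1)^2$.

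Next I would insert these into the three summands of $e(f)$. The exchange term collapses to the familiar degree-one density $4|f'|^2/(|f|^2+1)^2 = \frac{4a^2}{(|z|^2+a^2)^2}$, and the anisotropy term becomes $\frac{1}{2\eps^2}(|f|^2-1)^2/(|f|^2+1)^2 = \frac{2a^2(\Re z)^2}{\eps^2(|z|^2+a^2)^2}$. For the DMI term I would use that, $a$ being real, $|z+a|^4/(z+a)^2 = (\bar z+a)^2$, so that
\begin{equation*}
\frac{f'(z)}{(|f|^2+1)^2} = \frac{a\,(\bar z+a)^2}{2\,(|z|^2+a^2)^2}, \qquad \Re\!\left(\frac{f'(z)}{(|f|^2+1)^2}\right) = \frac{a\bigl((\Re z)^2-(\Im z)^2+2a\,\Re z+a^2\bigr)}{2\,(|z|^2+a^2)^2}.
\end{equation*}

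I would then integrate over $D_R$ in polar coordinates, and here the rotational symmetry of the disk does the work. It discards the anisotropic pieces, giving $\int_{D_R}\frac{(\Re z)^2}{(|z|^2+a^2)^2}\,\d{z} = \tfrac12\int_{D_R}\frac{|z|^2}{(|z|^2+a^2)^2}\,\d{z}$ and $\int_{D_R}\frac{(\Re z)^2-(\Im z)^2}{(|z|^2+a^2)^2}\,\d{z} = \int_{D_R}\frac{\Re z}{(|z|^2+a^2)^2}\,\d{z} = 0$, so that only the isotropic $a^2$-part of the DMI numerator survives. Each remaining radial integral is elementary through the substitution $s=r^2+a^2$: one obtains $\int_{D_R}\frac{\d{z}}{(|z|^2+a^2)^2}=\pi\frac{R^2}{a^2(R^2+a^2)}$, whence the exchange contribution $4\pi\frac{R^2}{R^2+a^2}$ and the DMI contribution $-\frac{2\pi\lambda a}{\eps}\frac{R^2}{R^2+a^2}$, together with $\int_{D_R}\frac{|z|^2}{(|z|^2+a^2)^2}\,\d{z} = \pi\bigl(\ln\frac{R^2+a^2}{a^2}-\frac{R^2}{R^2+a^2}\bigr)$, whence the anisotropy contribution $\pi\frac{a^2}{\eps^2}\bigl(\ln\frac{R^2+a^2}{a^2}-\frac{R^2}{R^2+a^2}\bigr)$. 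Collecting the exchange and DMI terms into the common factor $\frac{R^2}{R^2+a^2}$ produces $4\pi\bigl(1-\frac{\lambda a}{2\eps}\bigr)$ and reproduces the stated formula.

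There is no deep obstacle; the computation is direct. The one step requiring care is the DMI term, where I would first perform the complex simplification $|z+a|^4/(z+a)^2=(\bar z+a)^2$—valid precisely because the vortex centres lie on the real axis—and then rely on the symmetry cancellations to discard the $(\Re z)^2-(\Im z)^2$ and $\Re z$ contributions, leaving only the isotropic term that couples to $\lambda a/\eps$. Tracking the signs and the prefactor $4\lambda/\eps$ in this term is the only place where a slip is likely.
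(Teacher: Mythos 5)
Your proposal is correct and follows essentially the same route as the paper: reduce $e(f)$ to the explicit rational density $\frac{4a^2}{(|z|^2+a^2)^2}-\frac{2\lambda a}{\eps}\frac{(\Re z)^2-(\Im z)^2+2a\Re z+a^2}{(|z|^2+a^2)^2}+\frac{2a^2(\Re z)^2}{\eps^2(|z|^2+a^2)^2}$, discard the angularly odd pieces by symmetry, and evaluate the two radial integrals. Your intermediate identities (including $|z+a|^4/(z+a)^2=(\bar z+a)^2$) also make explicit the smooth extension of the density across $z=-a$ that the paper checks, and all constants match the stated formula.
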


\begin{proof}
In order to apply Lemma~\ref{lemma:densityholomorph}, it is necessary to verify that the energy density extends smoothly to $\C$.  
Since $f'(z) = \frac{2a}{(z + a)^2}$, it follows for $z \neq -a$ that
\begin{align*}
e(f)
&= 4\,\frac{|f'(z)|^2}{(|f(z)|^2 + 1)^2}
  - 4\,\frac{\lambda}{\eps}\,
    \Re\!\left(
        \frac{f'(z)}{(|f(z)|^2 + 1)^2}
    \right)
  + \frac{1}{2\eps^2}\,
    \frac{(|f(z)|^2 - 1)^2}{(|f(z)|^2 + 1)^2} \\[0.4em]
&= 4\,\frac{a^2}{(|z|^2 + a^2)^2}
   - 2\,\frac{\lambda a}{\eps}\,
     \frac{(\Re z)^2 - (\Im z)^2 + 2a\,\Re z + a^2}{(|z|^2 + a^2)^2}
   + \frac{2a^2}{\eps^2}\,
     \frac{(\Re z)^2}{(|z|^2 + a^2)^2}.
\end{align*}
Hence the energy density admits a smooth extension at $z = -a$.  
In polar coordinates $z = r e^{i\theta}$, integration yields
\begin{align*}
\int_{D_R} e(f)\, \mathrm{d}z
&= 8\pi a^2 \int_0^R \frac{r\, \mathrm{d}r}{(r^2 + a^2)^2}
   - 4\pi \frac{\lambda a^3}{\eps}
     \int_0^R \frac{r\, \mathrm{d}r}{(r^2 + a^2)^2}
   + 2\pi \frac{a^2}{\eps^2}
     \int_0^R \frac{r^3\, \mathrm{d}r}{(r^2 + a^2)^2} \\[0.5em]
&= 4\pi \!\left(1 - \frac{\lambda a}{2\eps}\right)
     \frac{R^2}{R^2 + a^2}
   + \pi \frac{a^2}{\eps^2}
     \!\left(
       \ln\!\frac{R^2 + a^2}{a^2}
       - \frac{R^2}{R^2 + a^2}
     \right)\!,
\end{align*}
which establishes the stated formula.
\end{proof}

\subsubsection*{Cut-off argument}\label{subsec:cut-off}

Assume $\lambda>0$; otherwise the change of sign
$\m = (m, m_3) \mapsto \m^{-} = (-m, m_3)$
reduces the case $\lambda<0$ to the same setting.  
To construct $f_{R,a}$ satisfying the prescribed boundary behaviour, the variable $z$ is expressed through the inverse stereographic projection $z = \Phi^{-1}(\Phi(z))$, followed by a cut-off modification analogous to that introduced in~\cite{doring2017compactness}.  

Define a smooth function $g : [0,\infty) \to \R$ such that
\begin{align}\label{eq:trial}
g(s) =
\begin{cases}
\ln(1 + s^2), & s \le 2, \\[0.3em]
\text{const}, & s \ge 4,
\end{cases}
\end{align}
and
\begin{align}\label{ineq:trial}
0 \le g'(s) \le \frac{2s}{s^2 + 1},
\qquad
0 \le -g''(s) \le \frac{C}{s^2 + 1}
\end{align}
for all $s \ge 2$.  

\medskip
Using $g$, define the modified stereographic map $\tilde\Phi : \R^2 \to \S^2$ by
\[
\tilde\Phi(x)
= \Bigl(
    g'(|x|)\,\frac{x}{|x|},
    \, \operatorname{sgn}(|x|^2 - 1)\, \sqrt{1 - g'(|x|)^2}
  \Bigr)^{\!T}.
\]
It follows that $\tilde\Phi(x) = \Phi(x)$ for $|x| \le 2$ and $\tilde\Phi(x) = \e{3}$ for $|x| \ge 4$.  
Consequently, $\Phi^{-1}\!\circ\! \tilde\Phi$ approximates the identity and diverges as $|x| \nearrow 4$.  
Recalling that $\Phi^{-1}(y, y_3) = \frac{y}{1 - y_3}$, one obtains
\[
\Phi^{-1} \circ \tilde\Phi(x)
= \frac{g'(|x|)}{1 - \operatorname{sgn}(|x|^2 - 1)\sqrt{1 - g'(|x|)^2}}\,
  \frac{x}{|x|}.
\]
Introducing the radial function
\[
r(s)
= \frac{g'(s)}{1 - \operatorname{sgn}(s^2 - 1)\sqrt{1 - g'(s)^2}},
\]
define its scaled version by $r_R(s) := R\,r(s/R)$, where $0 < R < \tfrac{1}{2}$.  
The modified ansatz is then given by
\[
f_{R,a}(z)
= \frac{r_R(|z|)\frac{z}{|z|} - a}
       {r_R(|z|)\frac{z}{|z|} + a}.
\]

By construction,
\[
f_{R,a}(z) = f(z) = \frac{z - a}{z + a}
\quad \text{for } |z| \le R,
\qquad
f_{R,a} \equiv 1
\quad \text{on } \C \setminus D_{2R},
\]
and $f_{R,a}$ is smooth on $\C \setminus \{-a\}$.  
The associated fields $u_{R,a}$ and $\m_{R,a} = \Phi(u_{R,a})$ contribute to the energy only on $D_{2R}$, with $\deg \m_{R,a} = -1$.  
According to Lemma~\ref{lemma:mRBounds} and Hölder’s inequality,
\[
\int_{D_{2R} \setminus D_R} e(\m_{R,a})\, \mathrm{d}x
\le C\!\left(\frac{a^2}{R^2} + \frac{a^2}{\eps^2}\right).
\]
Combining this with Lemma~\ref{lemma:localenergy} yields the upper bound
\begin{align*}
4\pi\!\left(1 - \frac{\lambda a}{2\eps}\right)\!\frac{R^2}{R^2 + a^2}
+ \pi \frac{a^2}{\eps^2}
  \!\left(
    \ln\!\frac{R^2 + a^2}{a^2}
    - \frac{R^2}{R^2 + a^2}
  \right)
+ C\!\left(
    \frac{a^2}{R^2} + \frac{a^2}{\eps^2}
  \right)\!.
\end{align*}
This upper bound depends solely on the ratios $\frac{a}{\eps}$ and $\frac{a}{R}$.  
To capture the asymptotic behaviour, consider the parametrization
\[
\frac{a}{\eps} = c_0 \frac{\lambda}{|\ln \lambda|},
\qquad
\frac{a}{R} = c_1 \frac{\lambda}{|\ln \lambda|},
\]
which is optimal for $c_0 = \tfrac{1}{2}$ and any $c_1 > 0$.  
As $\lambda \to 0$, this leads to
\begin{align}\label{eq:energyboundlambdalimit}
E(\m_{R,a})
\le 4\pi\!\left(
    1 - \frac{\lambda^2}{8|\ln \lambda|}
  \right)
  + o\!\left(\frac{\lambda^2}{|\ln \lambda|}\right),
\end{align}
thereby establishing Proposition~\ref{prop:upper_bound} and implying the corresponding existence result.

\begin{proof}[Proof of Theorem \ref{thm:existence}]
Fix $\eps>0$ and $0<|\lambda|<1$.  
Let $(\m_k)_{k\in \N}\subset H^{1}_c(\D;\St)$ denote a minimizing sequence for $E=E_{\lambda,\eps}$ subject to the prescribed boundary and degree conditions.  
By \eqref{prop:toplowerbound}, the sequence $(\m_k)$ satisfies
\[
\m_k\rightharpoonup\m 
\quad\text{weakly in } H^{1}(\D;\S^2), 
\qquad
\m_k \to \m 
\quad\text{strongly in } L^{2}(\D;\S^2),
\]
and $\m_k(x)\to \m(x)$ almost everywhere in $\D$.  
Consequently $|\m|=1$, and the Jacobians 
\(\omega(\m_k)=\m_k\cdot\partial_1\m_k\times\partial_2\m_k\)  
converge weakly-$\ast$ as measures on $\overline{\D}$.

Application of Theorem~E.1 in \cite{BrezisHarmonicmaps} together with Lemma~4.3 in \cite{LionsConcentrationCompactness} yields the existence of integers \(q_1,\ldots,q_N\in\Z\) and points \(x_1,\dots,x_N\in\D\) such that
\[
E_{\lambda,\eps}(\m) + 4\pi \sum_{k=1}^N |q_k| \le I,
\qquad 
\omega(\m_k) \rightharpoonup \omega(\m) + \sum_{k=1}^N q_k \delta_{x_k}.
\]
By Proposition~\ref{prop:upper_bound}, all topological charges vanish, i.e.\ \(q_1 = \ldots = q_N = 0\).  
Hence \(\omega(\m_k) \rightharpoonup \omega(\m)\), and consequently \(\deg\m=-1\) and \(E_{\lambda,\eps}(\m)=I\).

Local minimality implies that \(\m\) satisfies weakly the Euler--Lagrange system
\[
-\Delta\m +\frac{\lambda}{\eps}\mDMI+\frac{m_3}{\eps^{2}} \e 3 
\;\parallel\; \m,
\]
which constitutes an $L^2$ perturbation of the harmonic map equation.  
A modification of Hélein’s regularity argument \cite{helein2002harmonic} then guarantees continuity and smoothness of~$\m$.  
\end{proof}

\begin{rem}\label{rem:torus}
The unit disk $\D$ may be replaced by the flat two-dimensional torus
\[
\T^{2} = \R^{2}/\Z^{2},
\]
where Dirichlet boundary conditions are replaced by periodic ones.  
The reasoning in the proof remains valid under this setting.  
This torus formulation will be adopted in Section~\ref{sec:Energygap} and Section~\ref{sec:eps0} for the analysis of the large-domain limit.
\end{rem}

\section{The conformal limit} \label{sec:conformallimit}
Keeping the anisotropy constant $\varepsilon > 0$ fixed (for convenience, $\varepsilon = 1$), the limit $\lambda \to 0$ is referred to as the \emph{conformal limit}.  
This regime represents a manifestation of the bubbling phenomenon for Palais--Smale-type sequences associated with harmonic-type systems, as analyzed in~\cite{DingTian1995Approximateharmonicmap, qing1997bubbling, lin1998energy, Jost2019NoNeckDirichlet}.  
Let $\m_{\lambda}$ denote a family of local minimizers of $E_{\lambda} = E_{\lambda,1}$ provided by Theorem~\ref{thm:existence}.  
In the regime $\lambda \ll 1$, a bubbling analysis shows that the maps $\m_\lambda$ approach a degree-one harmonic profile.  
The subsequent analysis examines the asymptotic behavior of the total energy and its individual components in this limit.

\begin{lemma}\label{lemma:energyconformallimit}
We have 
\begin{align*}
 \lim_{\lambda \to 0} \int_{\D}  |m_{\lambda3}|^2\, \mathrm{d}x =0 \quad \text{and} \quad 
    \lim_{\lambda \to 0} E_{\lambda}(\m_{\lambda})
    = \lim_{\lambda \to 0} 
      \int_{\D} \frac{|\nabla \m_{\lambda}|^2}{2}\, \mathrm{d}x
    = 4\pi.
\end{align*}
\end{lemma}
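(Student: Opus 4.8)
The plan is to derive both limits by a sandwich argument from the matching upper and lower energy bounds already at our disposal; no bubbling analysis is needed at this stage. Since $\m_\lambda$ is a minimizer of $E_\lambda = E_{\lambda,1}$ in the degree $\pm 1$ class, we have $E_\lambda(\m_\lambda) = I$, so Proposition~\ref{prop:upper_bound} (which holds independently of $\eps$, hence for $\eps = 1$) furnishes the one-sided bound $\limsup_{\lambda \to 0} E_\lambda(\m_\lambda) \le 4\pi$. This sharp upper bound is the crucial input; everything else amounts to elementary bookkeeping of the $\lambda$-dependent constants.

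First I would extract the anisotropy decay. Using the second lower bound in \eqref{prop:toplowerbound} with $\eps = 1$, together with the topological bound \eqref{eq:topo-lower} (recall $\deg \m_\lambda = \pm 1$, so $\tfrac12\int_\D |\nabla \m_\lambda|^2 \ge 4\pi$), one obtains
\begin{equation*}
(1 - |\lambda|)\,4\pi + \frac{1 - |\lambda|}{2}\int_{\D} m_{\lambda 3}^2\,\mathrm{d}x
\;\le\; \frac{1 - |\lambda|}{2}\int_{\D}\!\left(|\nabla \m_\lambda|^2 + m_{\lambda 3}^2\right)\mathrm{d}x
\;\le\; E_\lambda(\m_\lambda).
\end{equation*}
Combining the outer inequality with $E_\lambda(\m_\lambda) \le 4\pi + o(1)$ yields $\tfrac{1-|\lambda|}{2}\int_\D m_{\lambda 3}^2\,\mathrm{d}x \le E_\lambda(\m_\lambda) - (1-|\lambda|)4\pi \le 4\pi|\lambda| + o(1)$. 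Since $\tfrac{1-|\lambda|}{2} \to \tfrac12$, this forces $\int_\D m_{\lambda 3}^2\,\mathrm{d}x \to 0$, the first assertion.

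Next I would close the energy and Dirichlet limits. The first lower bound in \eqref{prop:toplowerbound} combined with \eqref{eq:topo-lower} gives $E_\lambda(\m_\lambda) \ge (1-\lambda^2)\,4\pi$ (equivalently \eqref{eq:topo-lower-lambda}), hence $\liminf_{\lambda\to 0} E_\lambda(\m_\lambda) \ge 4\pi$; together with the limsup bound this yields $E_\lambda(\m_\lambda) \to 4\pi$. For the Dirichlet energy I would squeeze it between the topological bound $\tfrac12\int_\D |\nabla \m_\lambda|^2 \ge 4\pi$ and the rearranged first lower bound $\tfrac12\int_\D |\nabla \m_\lambda|^2 \le (1-\lambda^2)^{-1} E_\lambda(\m_\lambda) \to 4\pi$, which gives $\tfrac12\int_\D |\nabla \m_\lambda|^2 \to 4\pi$.

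I do not expect a serious analytical obstacle here: both conclusions are soft consequences of the fact that the ansatz-based upper bound of Proposition~\ref{prop:upper_bound} already matches the topological lower bound $4\pi$ to leading order. The only point requiring care is to pair each conclusion with the appropriate member of \eqref{prop:toplowerbound}—the $(1-|\lambda|)$-bound isolates the anisotropy term $\int_\D m_{\lambda 3}^2$, while the $(1-\lambda^2)$-bound controls the Dirichlet energy—and to track that all discrepancy terms are $O(|\lambda|) + o(1)$ and therefore vanish as $\lambda \to 0$. The deeper content, namely that the limiting Dirichlet energy $4\pi$ is realized by a single degree-one harmonic bubble with profile \eqref{eq:ansatz}, is precisely what the subsequent bubbling analysis is designed to address, and is not needed for the present lemma.
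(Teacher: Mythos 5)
Your proof is correct, and for the two energy limits it follows essentially the same sandwich as the paper: both squeeze $E_\lambda(\m_\lambda)$ and $\tfrac12\int_\D|\nabla\m_\lambda|^2\,\mathrm{d}x$ between the lower bounds \eqref{eq:topo-lower}, \eqref{prop:toplowerbound} and the upper bound of Proposition~\ref{prop:upper_bound}, giving $4\pi+\mathcal{O}(\lambda^2)$ for each. Where you genuinely diverge is the first assertion. The paper obtains $\int_\D m_{\lambda,3}^2\,\mathrm{d}x \le -\lambda\int_\D m_{\lambda,3}\,(\nabla\cdot m_\lambda)\,\mathrm{d}x$ from a scaling (inner-variation) argument for the minimizer and then closes with Young's inequality, which yields the sharper rate $\int_\D m_{\lambda,3}^2\,\mathrm{d}x=\mathcal{O}(\lambda^2)$. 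You instead pair the $(1-|\lambda|)$-coercivity bound of \eqref{prop:toplowerbound} with the topological bound $\tfrac12\int_\D|\nabla\m_\lambda|^2\,\mathrm{d}x\ge 4\pi$ to isolate $\tfrac{1-|\lambda|}{2}\int_\D m_{\lambda,3}^2\,\mathrm{d}x\le 4\pi|\lambda|+o(1)$, which gives only $\mathcal{O}(|\lambda|)$. Your route is softer: it needs no variational information beyond minimality of the energy value, whereas the paper's scaling argument exploits stationarity under dilations. Since the lemma only asserts that the limit vanishes, the weaker rate suffices here; the quadratic rate the paper extracts is a convenience consistent with the $\mathcal{O}(\lambda^2)$ bookkeeping elsewhere (e.g.\ the $L^2$ smallness of the tension field entering the bubbling analysis), but even there any rate tending to zero would do, so your argument fully proves the statement.
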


\begin{proof}
The lower and upper energy bounds established earlier imply that
\begin{align*}
    E_{\lambda}(\m_{\lambda})
    = 4\pi + \mathcal{O}(\lambda^2),
    \qquad
    \frac{1}{2} \int_{\D} |\nabla \m_{\lambda}|^2\, \mathrm{d}x
    = 4\pi + \mathcal{O}(\lambda^2),
\end{align*}
as $\lambda \to 0$. A scaling argument combined with Young's inequality yields
\begin{align*}
    \int_{\D} m_{\lambda,3}^2\, \mathrm{d}x
    &\le
    -\,\lambda
      \int_{\D} m_{\lambda,3}\, (\nabla \!\cdot\! m_{\lambda})\, \mathrm{d}x
      \le
    c\,\lambda^2
    + \frac{1}{2} \int_{\D} m_{\lambda,3}^2\, \mathrm{d}x,
\end{align*}
which implies
\begin{align*}
    \int_{\D} m_{\lambda,3}^2\, \mathrm{d}x = \mathcal{O}(\lambda^2),
    \qquad
    \lambda
    \int_{\D} m_{\lambda,3}\, (\nabla \!\cdot\! m_{\lambda})\, \mathrm{d}x
    = \mathcal{O}(\lambda^2),
\end{align*}
as $\lambda \to 0$.  
This proves the claim.
\end{proof}

Defining the blow-up scale
\[
R_\lambda := \sup \Big\{\rho>0 : \sup_{x \in \D} \int_{D_\rho(x) \cap \D} |\nabla \m_\lambda|^2 \, \d{x} < \delta_0^2 \Big\},
\] 
bubbling occurs:

\begin{lemma}
$R_\lambda \to 0$ as $\lambda \to 0$.
\end{lemma}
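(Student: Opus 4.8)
The plan is to argue by contradiction and to exploit the exact energy quantum $4\pi$ supplied by Lemma~\ref{lemma:energyconformallimit}: since the Dirichlet energy converges to the energy of precisely one degree-one bubble, any failure of concentration would force the strong limit to carry a full unit of topological charge while having constant boundary data, which the rigidity for harmonic maps on $\D$ forbids. Thus suppose $R_\lambda \not\to 0$; passing to a subsequence $\lambda_k \to 0$, we may assume $R_{\lambda_k} \ge \rho_0$ for some fixed $\rho_0 > 0$. By the definition of $R_\lambda$ this yields the uniform smallness
\[
\sup_{x \in \D} \int_{D_{\rho_0}(x) \cap \D} |\nabla \m_{\lambda_k}|^2 \, \d{x} \;<\; \delta_0^2
\qquad \text{for all } k,
\]
so the local gradient energy never reaches the $\eps$-regularity threshold on balls of fixed radius; this is exactly the no-concentration branch of the concentration--compactness dichotomy.

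Next I would record that $(\m_{\lambda_k})$ is a sequence of approximate harmonic maps whose tension fields vanish in $L^2$. From the Euler--Lagrange system in the proof of Theorem~\ref{thm:existence}, the tension field $\tau(\m_\lambda) = \Delta \m_\lambda + |\nabla \m_\lambda|^2 \m_\lambda$ is the tangential projection of the lower-order terms, so that pointwise
\[
\bigl|\tau(\m_\lambda)\bigr| \;\lesssim\; \frac{\lambda}{\eps}\,|\nabla \m_\lambda| + \frac{|m_{\lambda,3}|}{\eps^2}.
\]
With $\eps$ fixed, Lemma~\ref{lemma:energyconformallimit} bounds the first term by $O(\lambda)$ in $L^2$ (since $\|\nabla \m_\lambda\|_{L^2}$ stays bounded) and the second by $\|m_{\lambda,3}\|_{L^2}/\eps^2 = O(\lambda)$, whence $\norm{\tau(\m_{\lambda_k})}{L^2(\D)} \to 0$.

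These two inputs are exactly the hypotheses of $\eps$-regularity for approximate harmonic maps with $L^2$ tension fields. Invoking the interior theory of \cite{DingTian1995Approximateharmonicmap, qing1997bubbling, lin1998energy} together with its boundary refinement under Dirichlet data \cite{Jost2019NoNeckDirichlet}, I would obtain uniform $W^{2,2}$ bounds up to $\S^1$ and hence, after passing to a further subsequence, strong convergence $\m_{\lambda_k} \to \m_0$ in $H^1(\D;\St)$ to a harmonic map $\m_0$ with $\m_0|_{\S^1} = (c,0)$. Strong $H^1$ convergence preserves the Dirichlet energy, giving $\tfrac{1}{2}\int_\D |\nabla \m_0|^2\,\d{x} = 4\pi$, and---because the Jacobian $\m\cdot(\partial_1\m\times\partial_2\m)$ then converges in $L^1$---it preserves the degree, so $\deg \m_0 = -1$. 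But a harmonic map $\D \to \St$ with constant boundary value $(c,0)$ must be constant, by the conformality/Pohozaev argument recalled in the introduction (cf.~\eqref{eq:Pohozaev}); hence $\deg \m_0 = 0$ and $\int_\D |\nabla \m_0|^2\,\d{x} = 0$, a contradiction. Therefore $R_\lambda \to 0$.

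I expect the one genuinely delicate point to be the compactness step, namely that $\eps$-regularity must hold \emph{up to the Dirichlet boundary}: a priori the gradient could concentrate at $\partial\D$, and ruling this out requires the boundary version of the small-energy regularity estimate (as in \cite{Jost2019NoNeckDirichlet}) together with the correct choice of the threshold $\delta_0$, so that local energy below $\delta_0^2$ triggers the full regularity gain and thereby the strong convergence that transfers both energy and degree to $\m_0$. The control of the tension field, by contrast, is immediate from Lemma~\ref{lemma:energyconformallimit}.
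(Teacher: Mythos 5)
Your argument is correct and follows essentially the same route as the paper: contradiction via the assumption $R_{\lambda}\not\to 0$, the $O(\lambda)$ bound on the tension field from Lemma~\ref{lemma:energyconformallimit}, small-energy regularity to extract a strong $H^1$ limit with constant Dirichlet data, and the resulting incompatibility with the energy quantum $4\pi$. You merely make explicit the final rigidity step (a harmonic map on $\D$ with constant boundary values is constant) and the up-to-the-boundary nature of the $\varepsilon$-regularity, both of which the paper leaves implicit.
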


\begin{proof}
Indeed, if $R_\lambda$ did not converge to zero, then $\m_\lambda$ would remain regular at a uniform scale, and by the small-energy regularity estimate, we could extract a strong $H^1$ limit with constant Dirichlet boundary values, contradicting the nontrivial energy $\int_\D |\nabla \m_\lambda|^2 \sim 4\pi$. Indeed, let $\tau(\m) = \Delta \m + |\nabla \m|^2 \m$ denote the tension field. For critical points $\m$, we have
\[
\|\tau(\m)\|_{L^2}^2 \lesssim \lambda^2 \|\nabla \m\|_{L^2}^2 + \|m_3\|_{L^2}^2.
\]

A standard small-energy regularity argument shows that there exists $\delta_0>0$ such that if $\m \in H^2(\D;\S^2)$ satisfies 
\[
\|\nabla \m\|_{L^2(D_R\cap \D)} \le \delta_0 \quad\text{and}\quad \tau(\m) \in L^2,
\] 
then
\[
\int_{D_{R/2}\cap \D} |\nabla^2 \m|^2 \,\d{x} \lesssim \frac{1}{R^2} \int_{D_R\cap \D} \big(|\nabla \m|^2 + |\tau(\m)|^2\big)\,\d{x}.
\] 
Indeed, choosing a cutoff function $\eta \in C^\infty_c(D_R \cap \D)$ with $\eta \equiv 1$ on $D_{R/2} \cap \D$ and $|\nabla \eta| \lesssim 1/R$, and using $\Delta \m = -|\nabla \m|^2 \m + \tau(\m)$, we obtain
\[
\int_{D_R\cap \D} |\Delta \m|^2 \eta^2 \, \d{x} \lesssim \int_{D_R\cap \D} |\nabla \m|^4 \eta^2 + |\tau(\m)|^2 \eta^2 \, \d{x},
\]
from which the estimate follows via Ladyzhenskaya's interpolation inequality and integration by parts.
\end{proof}

Now let $\lambda_{k} \to 0$.
From the definition of $R_k=R_{\lambda_k}$, there exists a sequence $x_k=x_{\lambda_k} \in \D$ such that, for $\m_k=\m_{\lambda_k}$, 
\[
\int_{D_{R_k}(x_k)\cap \D} |\nabla \m_k|^2 \, \d{x} \ge \frac{\delta_0^2}{2}.
\] 
Bubbling near the boundary is possible only if (cf. \cite{Jost2019NoNeckDirichlet})
\[
\dist(x_k, \partial \D) \to 0 \quad \text{and}  \quad R_k / \dist(x_k, \partial \D) \to 0.
\]
Otherwise one would obtain a nontrivial smooth harmonic map on a half-plane with constant boundary, which is impossible. Moreover, bubbling analysis results in \cite{DingTian1995Approximateharmonicmap, qing1997bubbling, lin1998energy} 
guarantee energy decay and no-neck behavior in the intermediate region:
\begin{equation} \label{eq:energy_no_neck}
\lim_{\rho \to 0} \lim_{L \to \infty} \lim_{k \to \infty} \int_{(D_\rho \setminus D_{R_k L}) \cap \D} 
|\nabla \m_k|^2 \, \d{x} = 0
\end{equation}
and
\begin{equation} \label{eq:no_neck}
\lim_{\rho \to 0} \lim_{L \to \infty} \lim_{k \to \infty} 
\mathrm{osc}\big(\m_k; {(D_\rho \setminus D_{R_k L}) \cap \D }\big)  = 0.
\end{equation}

Consequently, in the blow-up coordinates $y \mapsto \m_k(x_k + R_k y)$, one can extract a nontrivial harmonic map $\h \in H^2_{\mathrm{loc}}(\R^2; \S^2)$ capturing the bubble, while the original maps $\m_{\lambda_k}$ converge weakly to the constant map $\m_0=(c,0)$ away from the bubbling point. 
According to \eqref{eq:energy_no_neck} the energy decouples as
\[
\lim_{k \to \infty} \int_\D |\nabla \m_k|^2 \, \d{x} = \int_{\R^2} |\nabla \h|^2 \, \d{x} + \int_\D |\nabla \m_0|^2 \, \d{x},
\]
and the no-neck condition \eqref{eq:no_neck} ensures $\h(\infty) = (c,0)$.

\medskip

For minimizers $\m_{\lambda,\eps}$ obtained in Theorem~\ref{thm:existence} we obtain the following result:

\begin{thm}\label{thm:bubbling}

Fix $\eps > 0$ and let $\lambda_{k} \to 0$. 
Then there exist scales $R_{k} \to 0$ and points $x_{k} \in \D$ such that the rescaled maps
\[
\tm_{k}(x) = \m_{\lambda_{k},\eps}\bigl(R_{k}x + x_{k}\bigr)
\]
satisfy, up to a subsequence, 
\[
\tm_{k} \;\rightharpoonup\; \h \quad \text{in } H^{2}_{\rm loc}(\R^{2};\S^2),
\]
for some harmonic map $\h$ of unit degree with
\[
\h(\infty) = (c,0) \in \S^1 \times \{0\}.
\]
\end{thm}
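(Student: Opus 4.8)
The plan is to assemble the concentration, compactness, and quantization ingredients prepared above into a single bubble-extraction argument, treating the $\tm_k$ as a sequence of approximate harmonic maps with vanishing tension. I would set $R_k := R_{\lambda_k}$ and take $x_k$ to be the concentration points already produced, so that $\int_{D_{R_k}(x_k)\cap\D}|\nabla\m_k|^2 \ge \delta_0^2/2$ while $\sup_x \int_{D_{R_k}(x)\cap\D}|\nabla\m_k|^2 \le \delta_0^2$ by the definition of the blow-up scale. The first step is to record how the tension field transforms under $\tm_k(x)=\m_k(R_kx+x_k)$: since $\tau$ scales like the Laplacian and the Dirichlet energy is scale invariant, $\tau(\tm_k)(x)=R_k^2\,\tau(\m_k)(R_kx+x_k)$ and
\[
\|\tau(\tm_k)\|_{L^2(\R^2)}^2 = R_k^2\,\|\tau(\m_k)\|_{L^2(\D)}^2 \lesssim R_k^2\bigl(\lambda_k^2\|\nabla\m_k\|_{L^2}^2 + \|m_{k,3}\|_{L^2}^2\bigr).
\]
With $\|\nabla\m_k\|_{L^2}^2\to 8\pi$ bounded, $\|m_{k,3}\|_{L^2}^2=\mathcal{O}(\lambda_k^2)$ from Lemma~\ref{lemma:energyconformallimit}, and $R_k\to 0$, this forces $\|\tau(\tm_k)\|_{L^2}\to 0$. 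I would also invoke the boundary dichotomy recorded above to ensure that the rescaled domains $(\D-x_k)/R_k$ exhaust $\R^2$: either the concentration is interior, or $\dist(x_k,\partial\D)\to 0$ but $R_k/\dist(x_k,\partial\D)\to 0$, so that the boundary recedes to infinity and no genuine boundary bubble survives.

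The second step is compactness. From the rescaled small-energy bound $\int_{D_1(y)}|\nabla\tm_k|^2\le\delta_0^2$ together with the $L^2$ tension control, the second-order small-energy estimate established above yields, for each fixed $L$, a uniform bound on $\|\nabla^2\tm_k\|_{L^2(D_L)}$; combined with $|\tm_k|=1$ and the energy bound this gives uniform $H^2(D_L)$ bounds. A diagonal extraction over an exhaustion by balls produces a subsequence with $\tm_k\rightharpoonup\h$ in $H^2_{\rm loc}(\R^2;\S^2)$, hence $\tm_k\to\h$ strongly in $H^1_{\rm loc}$ and in $C^0_{\rm loc}$, with $\nabla\tm_k\to\nabla\h$ in every $L^p_{\rm loc}$, $p<\infty$, by two-dimensional Sobolev embedding. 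Passing to the limit in $\Delta\tm_k+|\nabla\tm_k|^2\tm_k=\tau(\tm_k)$ — the Laplacian weakly, the quadratic term strongly, the right-hand side to zero — shows $\Delta\h+|\nabla\h|^2\h=0$, so $\h$ is a weakly, hence by elliptic regularity smoothly, harmonic map. Nontriviality follows from strong $H^1_{\rm loc}$ convergence and the lower concentration bound, giving $\int_{D_1}|\nabla\h|^2=\lim_k\int_{D_1}|\nabla\tm_k|^2\ge\delta_0^2/2>0$.

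The third step upgrades $\h$ to a degree-one profile. Weak lower semicontinuity and scale invariance give $\int_{\R^2}|\nabla\h|^2\le\liminf_k\int_\D|\nabla\m_k|^2=8\pi$, so $\h$ has finite energy; by the removable-singularity theorem it extends to a smooth harmonic map $\S^2\to\S^2$, whose energy therefore satisfies $\tfrac12\int|\nabla\h|^2\ge 4\pi$. The no-neck energy identity \eqref{eq:energy_no_neck}, together with the constancy of the far-field limit $\m_0=(c,0)$, yields the exact splitting $8\pi=\int_{\R^2}|\nabla\h|^2+\int_\D|\nabla\m_0|^2=\int_{\R^2}|\nabla\h|^2$, so $\tfrac12\int|\nabla\h|^2=4\pi$. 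This saturates the topological bound, forcing $\h$ to be $\pm$conformal of unit degree, a single bubble carrying the whole charge $-1$ of $\m_k$. Finally, the no-neck oscillation estimate \eqref{eq:no_neck} matches the bubble value to the far field, giving $\h(\infty)=(c,0)\in\S^1\times\{0\}$.

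The \emph{main obstacle} I anticipate is not the weak compactness itself but the verification that this chiral easy-plane functional genuinely fits the approximate-harmonic-map framework with $L^2$-controlled tension, and that the imported no-neck results survive the Dirichlet boundary. Concretely, the delicate points are the tension bound $\|\tau(\m)\|_{L^2}^2\lesssim\lambda^2\|\nabla\m\|_{L^2}^2+\|m_3\|_{L^2}^2$ combined with the $\mathcal{O}(\lambda^2)$ decay of $\|m_3\|_{L^2}^2$, the exclusion of boundary bubbles (relying on the nonexistence of nonconstant harmonic half-plane maps with constant boundary data), and the applicability of the Dirichlet-boundary no-neck theorem of \cite{Jost2019NoNeckDirichlet}; the energy-quantization and no-neck machinery of \cite{DingTian1995Approximateharmonicmap, qing1997bubbling, lin1998energy} is then used essentially as a black box.
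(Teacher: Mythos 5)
Your proposal is correct and follows essentially the same route as the paper, which presents the proof as the discussion preceding the theorem: the $L^2$ tension bound combined with Lemma~\ref{lemma:energyconformallimit}, the blow-up scale $R_\lambda$ with its concentration points, the boundary dichotomy excluding half-plane bubbles, small-energy regularity for the $H^2_{\rm loc}$ compactness, and the no-neck results of the cited bubbling literature used as a black box to obtain the energy identity and $\h(\infty)=(c,0)$. The only difference is that you spell out some steps the paper leaves implicit (the scaling of the tension field, the diagonal extraction, and the quantization argument forcing a single unit-degree bubble), all of which are consistent with the paper's argument.
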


\section{Energy gap for anisotropic harmonic maps}\label{sec:Energygap}
To exclude potential boundary effects, the discussion is restricted henceforth to maps 
$\m: \T^2 \to \S^2$ defined on the flat torus $\T^2$, where Theorem~\ref{thm:existence} remains applicable.  
The focus is on the large-domain regime $\eps \ll 1$, with $\lambda$ small but non-zero.  
This setting motivates the introduction of the effective \emph{easy-plane functional}
\begin{align}\label{eq:easyplane}
E_\lambda(m) 
= \frac{1}{2} \int_{\T^2} \bigl(|\nabla m|^2 - \lambda^2 (\nabla \cdot m)^2\bigr) \, \mathrm{d}x,
\end{align}
defined for horizontal fields $m=(m_1,m_2)$.  
By an argument analogous to that in \eqref{prop:toplowerbound}, this functional provides an $\eps$-independent lower bound for the full energy,
\begin{equation}
E_\lambda(m) \le E_{\lambda,\eps}(\m),
\end{equation}
and thus constitutes a natural candidate for the $\Gamma$--limit of $E_{\lambda,\eps}$ as $\eps \to 0$.  
The precise relation between $E_{\lambda,\eps}$ and $E_\lambda$ will be established in Section~\ref{sec:eps0}.  

\medskip
The present section concerns the analysis of critical points $m$ of $E_{\lambda}$ in $H^1(\T^2;\S^1)$.  
Owing to the anisotropic dependence on the gradient, such maps will be referred to as \emph{anisotropic harmonic maps}.  
These maps do not arise from a simple change of metric.  
In analogy with the classical case, it will be shown that anisotropic harmonic maps possessing sufficiently small energy are necessarily trivial, thereby establishing an energy gap.
\medskip

Variation in the direction of the admissible tangent field $\psi=m^{\perp} \eta$ for $\eta \in H^1\cap L^{\infty}(\T^2)$, we obtain 
    \begin{align} \label{eq:weakangletest}
        0=\int( m^{\perp} \cdot \nabla m) \cdot \nabla \eta + \lambda^2 (\nabla \cdot m)(\nabla \cdot m^{\perp}) \eta + \lambda^2 (\nabla \cdot m) (m^{\perp} \cdot \nabla) \eta .
    \end{align}
Similarly to the harmonic map case, the current 
\[
m^{\perp} \cdot \nabla m = m_1 \nabla m_2 - m_2 \nabla m_1
\]
reduces to a phase gradient $\nabla \phi$ provided there exists a lift $m=e^{i \phi}$. The existence of such a lift follows from a topological argument once the energy of $m$ is sufficiently small.
\begin{lemma} \label{lemma:liftexistence}
    Let $m\in H^1(\T^2 ;\S^1) $ satisfy
    \begin{align*}
        \int_{\T^2} |\nabla m|^2 \d x < 4\pi^2.
    \end{align*}
    Then there exists $\phi\in H^1(\T^2)$ such that $m=e^{i \phi}$.
\end{lemma}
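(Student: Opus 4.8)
The plan is to reduce the existence of a lift to a purely topological condition — the vanishing of the two winding numbers of $m$ along the generating cycles of $\T^2$ — and then to show that the smallness hypothesis forces these winding numbers to be zero. Identifying $m\in H^1(\T^2;\S^1)$ with a $\Z^2$-periodic map on $\R^2$, I would first introduce the current
\[
j := m^{\perp}\cdot\nabla m = m_1\nabla m_2 - m_2\nabla m_1 \in L^2(\T^2;\R^2).
\]
Because $|m|=1$ forces $\nabla m_1$ and $\nabla m_2$ to be parallel almost everywhere, the distributional Jacobian of $m$ vanishes, and since $\curl j = 2\,\mathrm{Jac}(m)$ this gives $\curl j = 0$ in $\mathcal{D}'(\T^2)$ (no vortices occur for $\S^1$-valued Sobolev maps in two dimensions). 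Consequently $j$ is a closed $L^2$ field whose periods along the two fundamental cycles $\gamma_1,\gamma_2$ are $2\pi$ times integers $d_1,d_2\in\Z$. By the lifting theory of Bourgain–Brezis–Mironescu, $m$ admits a representation $m=e^{i\phi}$ with $\phi\in H^1(\T^2)$ if and only if $d_1=d_2=0$ — equivalently, if and only if the harmonic part in the Hodge decomposition $j=\nabla\psi+(c_1,c_2)$ vanishes, where $c_k=2\pi d_k$.

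Next I would establish an energy lower bound in terms of $d_1,d_2$ by slicing. By Fubini, for almost every $s\in[0,1]$ the slice $x_1\mapsto m(x_1,s)$ lies in $H^1(\S^1;\S^1)$, hence is absolutely continuous with a well-defined integer degree $d_1(s)=\tfrac{1}{2\pi}\int_0^1 j_1(x_1,s)\,\mathrm{d}x_1$; curl-freeness of $j$ makes $s\mapsto d_1(s)$ constant and equal to $d_1$. Writing a local phase $\theta$ on each slice, Cauchy–Schwarz on the unit interval yields $\int_0^1|\partial_1 m(x_1,s)|^2\,\mathrm{d}x_1=\int_0^1|\theta'|^2\,\mathrm{d}x_1\ge\bigl(\int_0^1\theta'\,\mathrm{d}x_1\bigr)^2=4\pi^2 d_1^2$. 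Integrating over $s$, and performing the symmetric computation in the $x_2$ direction, I obtain
\[
\int_{\T^2}|\nabla m|^2\,\mathrm{d}x \;\ge\; 4\pi^2\,(d_1^2+d_2^2).
\]
(The same estimate follows more abstractly from $\int|\nabla m|^2=\int|j|^2=\int|\nabla\psi|^2+(c_1^2+c_2^2)$ by orthogonality of the Hodge decomposition, using $|\T^2|=1$.)

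Combining this with the hypothesis $\int_{\T^2}|\nabla m|^2<4\pi^2$ gives $d_1^2+d_2^2<1$, and since $d_1,d_2\in\Z$ this forces $d_1=d_2=0$. The lifting criterion then produces the desired $\phi\in H^1(\T^2)$ with $m=e^{i\phi}$. The main obstacle is purely at the level of $H^1$ regularity rather than in the estimates: one must justify that the winding numbers are well defined and constant across slices, that $j$ is genuinely curl-free (i.e.\ that the distributional Jacobian vanishes, not merely the pointwise one), and that vanishing periods yield an $H^1$ — and not merely measurable — phase. I would handle all three points by approximating $m$ in the strong $H^1$ topology by smooth $\S^1$-valued maps, which are dense in $H^1(\T^2;\S^1)$, passing the identity $\curl j=0$ and the integrality of the periods to the limit, and invoking the Bourgain–Brezis–Mironescu construction for the regularity of the lift.
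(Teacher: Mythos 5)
Your proposal is correct and follows essentially the same route as the paper: both arguments identify the obstruction to lifting with the two winding numbers of $m$ along the generating cycles of $\T^2$, derive the lower bound $\int_{\T^2}|\nabla m|^2 \ge 4\pi^2(d_1^2+d_2^2)$ by slicing, and then use the Hodge decomposition of the current $m^{\perp}\cdot\nabla m$ (with vanishing harmonic part because the periods vanish) to produce the $H^1$ phase, with smooth approximation handling the regularity issues. The only cosmetic difference is that you phrase the curl-freeness via the distributional Jacobian and cite the Bourgain–Brezis–Mironescu lifting criterion, whereas the paper works directly with the $1$-form $-i\,m^{-1}\,\mathrm{d}m$ after first reducing to smooth $m$.
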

\begin{proof}
Without loss of generality, we may assume $m \in C^{\infty}(\T^2;\S^1)$ by a standard approximation argument at the critical exponent, see for instance \cite{SchoenUhlenbeckRegularity}, and the compactness of $\T^2$. For maps $\T^2 \simeq \S^1 \times \S^1 \to \S^1$, the homotopy classes $[\S^1\times \S^1;\S^1]\simeq\Z^2$ can be identified via $k=\deg\left.(m \right|_{\S^1\times \{y_0\}})$ and $\ell=\deg\left.(m \right|_{\{x_0\} \times \S^1})$ for some $x_0 ,y_0 \in \S^1$. 
Note that $m|_{\S^1\times \{y_0\}}$ is homotopic to $m|_{\S^1\times \{y\}}$ for any $y \in \S^1$, and similarly for the first component. Moreover,
    \begin{align*}
        \int_{\T^2} |\nabla m|^2 \,\d x 
        &= \int_{\T^2} |\partial_x m|^2 \,\d x + \int_{\T^2} |\partial_y m|^2 \,\d x \\
        &\geq 4 \pi^2 (k^2+\ell^2),
    \end{align*}
where we used the one-dimensional lower bound for the degree on each circle. In particular, $k=\ell=0$. Let us consider the $1$-form $\eta = -i m^{-1} \d{m}$. By the Hodge decomposition on $\T^2$, we can write
\begin{align*}
\eta = \d \phi + \d^{\ast} \psi + h,
\end{align*}
where $\phi$ is a smooth $0$-form, $\psi$ is a smooth $2$-form, and $h$ is a harmonic $1$-form. We may further assume that $\int\phi\, \d x  = 0$. Note that $h$ is constant and the corresponding function to $\psi$ is harmonic and therefore constant, since $m^{-1}\nabla m$ is curl-free, i.e. $\eta = \d \phi + h$. Using $\deg\left.(m \right|_{\S^1\times \{y_0\}})=\deg\left.(m \right|_{\{x_0\} \times \S^1})=0$, we obtain
    \begin{align*}
        \int_{\S^1\times \{y_0\}} \eta = \int_{\{x_0\} \times \S^1} \eta = 0.
    \end{align*}
    and hence $h =0$. Therefore $\eta = \d \phi $, which implies $\d{\, (e^{-i\phi} m)} = 0$ and in particular the claim.
\end{proof}
Substituting $m = e^{i \phi}$ and $\eta = \phi$ into \eqref{eq:weakangletest} yields
\begin{align} \label{ineq:boundingangle}
    \int_{\T^2} |\nabla \phi|^2 \, \mathrm{d}x
    \le c \lambda^2 \bigl(1 + \|\phi\|_{L^{\infty}}\bigr) \int_{\T^2} |\nabla \phi|^2 \, \mathrm{d}x.
\end{align}
To establish the energy gap, it remains to control $\phi$ pointwisely.  
It is natural to consider the function space 
\[
H^1(\T^2;\S^1) \cap H^3_{\mathrm{loc}}(\T^2 \setminus \{0\};\S^1),
\]
in which critical points satisfy the equation
\begin{align}\label{eq:m}
    \Delta m + |\nabla m|^2 m 
    - \lambda^2 \Bigl( \nabla(\nabla\cdot m) - \bigl[\nabla(\nabla\cdot m) \cdot m\bigr]\, m \Bigr) = 0
    \quad \text{in } \T^2 \setminus \{0\}.
\end{align}
The higher regularity away from the origin allows follow the approach of Sacks and Uhlenbeck \cite{SacksUhlenbeckImmersions}: first, small-energy regularity is established away from $0$; second, a Pohozaev identity is derived; finally, a growth estimate for the Dirichlet energy of $\phi$ is obtained.  
Since $|\nabla \phi| = |\nabla m|$, it suffices to perform this analysis directly in terms of $m$.  

\begin{lemma}[Small-energy regularity]\label{lemma:smallenergy}
Let $m \in H^1(\T^2;\S^1) \cap H^3_{\mathrm{loc}}(\T^2 \setminus \{0\};\S^1)$ be a critical point of $E_\lambda$.  
There exist constants $\lambda_0, \delta > 0$ such that, if $\|\nabla m\|_{L^2(\T^2)} \le \delta$ and $0 < |\lambda| < \lambda_0$, then for every $R>0$ with $D_{2R} \subset \T^2 \setminus \{0\}$,
\begin{align*}
R^2 \sup_{D_R} |\nabla m|^2 \;\le\; c \int_{D_{2R}} |\nabla m|^2 \, \mathrm{d}x.
\end{align*}
\end{lemma}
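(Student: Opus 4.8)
The plan is to recast the Euler--Lagrange equation \eqref{eq:m} as a uniformly elliptic quasilinear system and then run the classical Bochner--Moser scheme in the form of Schoen's argument, using the smallness of $\lambda_0$ both to preserve ellipticity and to absorb the anisotropic corrections. First I would move all second-order terms of \eqref{eq:m} to the left and write
\[
\Delta m - \lambda^2 P_m^{\perp}\nabla(\nabla\cdot m) = -|\nabla m|^2 m, \qquad P_m^{\perp} = I - m\otimes m .
\]
The principal operator $\mathcal{L}_\lambda m := \Delta m - \lambda^2 P_m^{\perp}\nabla(\nabla\cdot m)$ has symbol $-|\xi|^2 I + \lambda^2 P_m^{\perp}\,\xi\otimes\xi$, whose eigenvalues lie in $[-|\xi|^2,\,-(1-\lambda^2)|\xi|^2]$; hence $\mathcal{L}_\lambda$ is uniformly elliptic with ellipticity bounded below by $1-\lambda_0^2$ for $0<|\lambda|<\lambda_0<1$, with bounded coefficients depending smoothly on $m$, and the right-hand side has natural quadratic growth $|\nabla m|^2$.

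The main step is a Bochner-type inequality for the energy density $e:=|\nabla m|^2$, for which the hypothesis $m\in H^3_{\mathrm{loc}}(\T^2\setminus\{0\})$ is exactly what makes the computation legitimate on $D_{2R}\subset\T^2\setminus\{0\}$. Starting from $\Delta e = 2|\nabla^2 m|^2 + 2\,\nabla m\cdot\nabla(\Delta m)$ and substituting $\Delta m$ from the rewritten equation, the harmonic contribution is bounded by $C e^2$ after using $m\cdot\nabla^2 m = -\,(\text{first order})$. For the anisotropic terms I would compute $\mathcal{L}_\lambda e$ directly: its genuine third-order contributions cancel against those produced by $2\,\nabla m\cdot\nabla(\Delta m)$, while the second-order part combines with $2|\nabla^2 m|^2$ into a positive-definite form bounded below by $2(1-\lambda^2)|\nabla^2 m|^2$. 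The remaining coefficient-derivative terms, arising from $\nabla P_m^{\perp}$ and thus carrying extra factors of $\nabla m$, are estimated by Young's inequality against this positive Hessian term and absorbed once $\lambda_0$ is small. This yields the subsolution inequality
\[
-\,\mathcal{L}_\lambda e \;\le\; C\,e^2 \qquad \text{on } D_{2R},
\]
with $\mathcal{L}_\lambda$ uniformly elliptic, the ellipticity constants being uniform for $|\lambda|\le\lambda_0$.

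It remains to convert this into the stated pointwise bound. Viewing $-\mathcal{L}_\lambda e \le (Ce)\,e$ as a subsolution of a uniformly elliptic equation with potential $V=Ce$ of small $L^1$-mass --- guaranteed on every ball by the global hypothesis $\|\nabla m\|_{L^2(\T^2)}\le\delta$ --- a Moser iteration delivers the local mean-value inequality
\[
\sup_{D_{r/2}(x_0)} e \;\le\; \frac{C}{r^2}\int_{D_r(x_0)} e \,\mathrm{d}x
\]
for balls $D_r(x_0)\subset\T^2\setminus\{0\}$ with $\int_{D_r(x_0)}e\le\delta_0^2$, the sharp $r^{-2}$ scaling being extracted by Schoen's trick, i.e. maximizing $\sigma\mapsto (r-\sigma)^2\sup_{D_\sigma(x_0)}e$ and rescaling at the resulting concentration scale. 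Choosing $\delta\le\delta_0$ forces $\int_{D_R(x)}e\le\|\nabla m\|_{L^2(\T^2)}^2\le\delta_0^2$ on every ball, so I would apply the mean-value inequality with $r=R$ centered at each $x\in D_R$; since $D_R(x)\subset D_{2R}$ whenever $x\in D_R$, this gives $e(x)\le (C/R^2)\int_{D_{2R}}e$, and taking the supremum over $x\in D_R$ produces the claimed estimate.

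I expect the Bochner step to be the main obstacle: the decisive points are the exact cancellation of the third-order contributions inside $\mathcal{L}_\lambda e$ and the verification that the $\nabla P_m^{\perp}$ remainder is genuinely controlled by the positive term $2(1-\lambda^2)|\nabla^2 m|^2$ after Young's inequality, which is precisely where the restriction to small $|\lambda|<\lambda_0$ is consumed.
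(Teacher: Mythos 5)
Your overall architecture (differential inequality for $e=|\nabla m|^2$, Moser iteration with a small-$L^1$ potential, Schoen's trick) is a legitimate alternative to what the paper does, but the step you yourself flag as the main obstacle contains a genuine gap: the claimed cancellation of third-order terms cannot be arranged by applying a modified second-order scalar operator to $e$. Substituting the equation into $\Delta e = 2|\nabla^2 m|^2 + 2\,\nabla m\cdot\nabla(\Delta m)$ produces (using $\partial_k m\cdot m=0$, so that $P_m^\perp$ acts trivially on $\partial_k m$) the anisotropic third-order term
\[
2\lambda^2\,\partial_k m_i\,\partial_k\partial_i\partial_j m_j ,
\]
in which the component index $j$ of the third derivative is contracted with one of its own derivative indices, and the component index $i$ of the first-order factor is contracted with a derivative index. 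By contrast, for any coefficients $b_{kl}$ (even $m$-dependent), the third-order content of $b_{kl}\partial_k\partial_l e$ is $2\,b_{kl}\,\partial_i m_a\,\partial_k\partial_l\partial_i m_a$, which always pairs the component index of the third derivative with the component index of $\nabla m$. These index structures are incompatible, so no choice of $\mathcal{L}_\lambda$ acting on the scalar $e$ cancels the offending term. The term is not fatal --- one can write it as $2\lambda^2\nabla\cdot(\partial_k m_i\,\partial_i d)_k - 2\lambda^2\,\Delta m\cdot\nabla d$ with $d=\nabla\cdot m$ and use the equation again on $\Delta m$ --- but then the subsolution inequality acquires a divergence-form source $\nabla\cdot F$ with $|F|\lesssim \lambda^2|\nabla m|\,|\nabla^2 m|$ rather than the clean $-\mathcal{L}_\lambda e\le Ce^2$ you assert, and the Moser iteration must be redone to accommodate it. As written, the decisive computation is not only unverified but based on a mechanism that fails.

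For comparison, the paper avoids the Bochner formula entirely: after scaling to $R=1$ it tests the Euler--Lagrange equation with $(\Delta m)\eta^2$ and, after differentiating, with $\nabla(\Delta m)\eta^4$, controls the quartic terms by Ladyzhenskaya's interpolation inequality (this is where the smallness of $\delta$ enters) and absorbs the $\lambda^2$-weighted Hessian terms for small $\lambda_0$, arriving at local $H^2$ and $H^3$ bounds by $\int|\nabla m|^2$; the pointwise bound then follows from the embedding $W^{2,1}(\R^2)\hookrightarrow C^0(\R^2)$ applied to $|\nabla m|^2$. That route trades the pointwise differential inequality for integral estimates, which is precisely what sidesteps the third-order bookkeeping that derails your Bochner step.
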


\begin{proof}
By a standard scaling argument, it suffices to consider $R = 1$.  
Let $\eta \in C_c^\infty(D_2)$ satisfy $\eta \equiv 1$ on $D_1$.  
Testing the Euler-Lagrange equation for $m$ with $(\Delta m)\, \eta^2$ yields
\begin{align*}
    \int_{D_2} |\Delta m|^2 \, \eta^2 \, \mathrm{d}x
    \le c \int_{D_2} |\nabla m|^4 \, \eta^2 \, \mathrm{d}x
      + c \lambda^2 \int_{D_2} |\nabla^2 m|^2 \, \eta^2 \, \mathrm{d}x.
\end{align*}
Applying Ladyzhenskaya's interpolation inequality,
\begin{align*}
    \int_{D_2} |\nabla m|^4 \, \eta^2 \, \mathrm{d}x
    \le c \left( \int_{D_2} |\nabla m|^2 \, \mathrm{d}x \right)
      \left( \int_{D_2} |\nabla^2 m|^2 \, \eta^2 \, \mathrm{d}x
           + \int_{D_2} |\nabla m|^2 |\nabla \eta|^2 \, \mathrm{d}x \right),
\end{align*}
and integrating by parts,
\begin{align*}
    \int_{D_2} |\nabla^2 m|^2 \, \eta^2 \, \mathrm{d}x
    \le c \int_{D_2} |\Delta m|^2 \, \eta^2 \, \mathrm{d}x
      + c \int_{D_2} |\nabla m|^2 |\nabla \eta|^2 \, \mathrm{d}x.
\end{align*}
Hence, for sufficiently small $\delta$ and $\lambda_0$,
\begin{align}\label{eq:H2estimate}
    \int_{D_2} |\nabla^2 m|^2 \, \eta^2 \, \mathrm{d}x
    \le c \int_{D_2} |\nabla m|^2 (\eta^2 + |\nabla \eta|^2) \, \mathrm{d}x.
\end{align}

Differentiating the Euler-Lagrange equation, testing with $\nabla(\Delta m)\, \eta^4$, and estimating similarly, one obtains
\begin{align*}
    \int_{D_2} |\nabla(\Delta m)|^2 \, \eta^4 \, \mathrm{d}x
    \lesssim \int_{D_2} \bigl(|\nabla m|^4 + |\nabla^2 m|^4 + |\nabla m|^6\bigr) \eta^4 \, \mathrm{d}x
          + \lambda^2 \int_{D_2} |\nabla^3 m|^2 \, \eta^4 \, \mathrm{d}x.
\end{align*}
Applying Ladyzhenskaya's interpolation inequality together with \eqref{eq:H2estimate} leads to
\begin{align*}
    \int_{D_2} |\nabla^3 m|^2 \, \eta^4 \, \mathrm{d}x
    \lesssim \int_{D_2} |\nabla m|^2 \, \mathrm{d}x.
\end{align*}
Finally, the continuous embedding $W^{2,1}(\R^2) \hookrightarrow C^0(\R^2)$ implies the uniform bound on $|\nabla m|$ in $D_1$.
\end{proof}

The result transfers to the annulus:

\begin{lemma}\label{lemma:pointwise estimate}
Let $m \in H^1(\T^2;\S^1) \cap H^3_{\mathrm{loc}}(\T^2 \setminus \{0\};\S^1)$ be a critical point of $E_{\lambda}$ with 
$\|\nabla m\|_{L^2(\T^2)} \le \delta$ and $0 < |\lambda| < \lambda_0$.  
Then there exists a constant $c>0$ such that
\begin{align*}
    |x|^2 \, |\nabla m(x)|^2 
    \le c \int_{D_{4|x|} \setminus D_{\frac{|x|}{4}}} |\nabla m|^2 \, \mathrm{d}y
\end{align*}
for every $x \in \T^2 \setminus \{0\}$.
\end{lemma}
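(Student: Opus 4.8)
The plan is to transfer the interior small-energy regularity estimate of Lemma~\ref{lemma:smallenergy} from the origin-centred setting to a ball centred at the given point $x$, and then to rewrite the resulting integral over the concentric annulus. Since the Euler--Lagrange equation \eqref{eq:m} is invariant under translation and holds on all of $\T^2 \setminus \{0\}$, I would first observe that the conclusion of Lemma~\ref{lemma:smallenergy} remains valid for any ball $D_R(x_0)$ whose concentric double $D_{2R}(x_0)$ is contained in $\T^2 \setminus \{0\}$: translating $x_0$ to the origin and rescaling to $R = 1$ reproduces precisely the situation treated there, and the proof uses only the (regular) interior equation on $D_{2R}(x_0)$ together with the global smallness $\|\nabla m\|_{L^2(\T^2)} \le \delta$, which is unaffected by translation.

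Next I would fix $x \in \T^2 \setminus \{0\}$ and choose $R = |x|/4$. A triangle-inequality check shows that every $y \in D_{2R}(x) = D_{|x|/2}(x)$ satisfies $|x|/2 \le |y| \le 3|x|/2$, so the double ball avoids the origin and the hypotheses of Lemma~\ref{lemma:smallenergy} are met. This step yields
\[
R^2 \sup_{D_R(x)} |\nabla m|^2 \le c \int_{D_{2R}(x)} |\nabla m|^2 \, \mathrm{d}y,
\]
and, since $x$ is the centre of $D_R(x)$ and $\nabla m$ is continuous there (by $H^3_{\mathrm{loc}}$ regularity away from the origin), evaluating the supremum at $y = x$ turns the left-hand side into $\tfrac{|x|^2}{16}\,|\nabla m(x)|^2$.

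Finally I would enlarge the domain of integration: the inclusion $\{\,|x|/2 \le |y| \le 3|x|/2\,\} \subset D_{4|x|} \setminus D_{|x|/4}$ holds since $|x|/2 \ge |x|/4$ and $3|x|/2 < 4|x|$, whence $\int_{D_{2R}(x)} |\nabla m|^2 \,\mathrm{d}y \le \int_{D_{4|x|}\setminus D_{|x|/4}} |\nabla m|^2\,\mathrm{d}y$ because the integrand is nonnegative. Combining these estimates and absorbing the numerical factor $16$ into the constant $c$ gives the asserted bound. I do not anticipate a substantive obstacle here: the argument is a routine localisation of interior regularity, and the only point demanding care is the very first one, namely the justification that Lemma~\ref{lemma:smallenergy} may be applied on a ball centred away from the singularity, which rests on the disjointness of $D_{2R}(x)$ from $\{0\}$.
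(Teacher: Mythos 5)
Your proposal is correct and follows essentially the same route as the paper: both rest on the observation that the small-energy regularity of Lemma~\ref{lemma:smallenergy} is translation-invariant and applies on any disk whose concentric double avoids the origin, combined with the geometric inclusion of that double in the annulus $D_{4|x|}\setminus D_{|x|/4}$. The only cosmetic difference is that the paper covers the full annulus $D_{2r}\setminus D_{r/2}$ by finitely many such disks so as to bound $\sup_{D_{2r}\setminus D_{r/2}}|\nabla m|^2$ (a form it reuses later), whereas you apply the estimate on the single disk $D_{|x|/4}(x)$ and evaluate at the centre, which suffices for the stated pointwise bound.
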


\begin{proof}
By Morrey's Dirichlet growth lemma, $m \in C^{1,\gamma}_{\mathrm{loc}}(\T^2 \setminus \{0\};\S^1)$ for some $\gamma \in (0,1)$.
The annulus $D_{2r} \setminus D_{r/2}$ can be covered by finitely many disks of radius $r/4$.  
Applying Lemma~\ref{lemma:smallenergy} on each disk and taking the supremum over the annulus yields
\begin{align*}
    r^2 \sup_{\partial D_r} |\nabla m|^2
    \le r^2 \sup_{D_{2r} \setminus D_{r/2}} |\nabla m|^2
    \le c \int_{D_{4r} \setminus D_{r/4}} |\nabla m|^2 \, \mathrm{d}x,
\end{align*}
which establishes the claim.
\end{proof}
\begin{prop}[Pohozaev identity]
Let $m \in H^1(\T^2;\S^1)\cap H^3_{\loc}(\T^2\setminus\{0\};\S^1)$ be a critical point of $E_{\lambda}$ with $\|\nabla m\|_{L^2} \le \delta$ and $0<|\lambda|<\lambda_0$. Then, for every $R>0$, 
 \begin{equation} \label{eq:Pohozaev_plane}
\int_{\del D_R} \left( \left| \dd{m}{\tau} \right|^2 -\left| \dd{m}{\nu} \right|^2+ \lambda^2\left(\nabla \cdot m \right)\left( \dd{m}{\nu} \cdot \nu- \dd{m}{\tau} \cdot \tau\right) \right)=0. 
\end{equation}
\end{prop}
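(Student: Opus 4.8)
The plan is to read the identity off from the divergence-free stress–energy (energy–momentum) tensor of the Lagrangian $L = \tfrac{1}{2}|\nabla m|^2 - \tfrac{\lambda^2}{2}(\nabla\cdot m)^2$, which is the coordinate packaging of testing the Euler--Lagrange equation \eqref{eq:m} against the scaling field $x\cdot\nabla m$ (an inner, i.e.\ domain, variation). Concretely, using $\partial L/\partial(\partial_i m_a) = \partial_i m_a - \lambda^2(\nabla\cdot m)\delta_{ia}$ I would set
\[
T_{ij} = \sum_{a} \partial_i m_a\,\partial_j m_a - \lambda^2(\nabla\cdot m)\,\partial_j m_i - L\,\delta_{ij},
\]
and first record its two structural properties on $\T^2\setminus\{0\}$. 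Divergence-freeness, $\sum_i\partial_i T_{ij} = 0$, holds because $L$ carries no explicit dependence on $x$ or on $m$: the term $\sum_a[\partial_i(\partial L/\partial(\partial_i m_a))]\partial_j m_a$ equals $-\mu\sum_a m_a\,\partial_j m_a = -\tfrac{\mu}{2}\partial_j|m|^2 = 0$ by the unit-length constraint (here $\mu$ is the Lagrange multiplier in \eqref{eq:m}), while the remaining terms cancel $\partial_j L$ by the chain rule. Tracelessness, $\sum_i T_{ii} = |\nabla m|^2 - \lambda^2(\nabla\cdot m)^2 - 2L = 0$, reflects the conformal (scale) invariance of $E_\lambda$ in two dimensions, since $2L = |\nabla m|^2 - \lambda^2(\nabla\cdot m)^2$.

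Combining these gives the pointwise conservation law $\sum_{i,j}\partial_i(x_j T_{ij}) = \sum_i T_{ii} + \sum_{i,j} x_j\,\partial_i T_{ij} = 0$ on $\T^2\setminus\{0\}$. Integrating over an annulus $D_R\setminus D_r$ (where the $H^3_{\mathrm{loc}}$, hence $C^{1,\gamma}$, regularity legitimizes the divergence theorem) and using $x = R\nu$ on $\partial D_R$ and $x = r\nu$ on $\partial D_r$ yields
\[
R\int_{\partial D_R} T_{\nu\nu}\,\mathrm{d}s = r\int_{\partial D_r} T_{\nu\nu}\,\mathrm{d}s,
\qquad T_{\nu\nu} := \sum_{i,j}\nu_i\nu_j T_{ij},
\]
so $R\mapsto R\int_{\partial D_R}T_{\nu\nu}\,\mathrm{d}s$ is constant. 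To pin the constant to zero I would let $r\to 0$: since $|T_{\nu\nu}|\lesssim|\nabla m|^2$, the pointwise bound of Lemma~\ref{lemma:pointwise estimate} gives $r\int_{\partial D_r}|T_{\nu\nu}|\,\mathrm{d}s \lesssim r\cdot r\cdot\sup_{\partial D_r}|\nabla m|^2 \lesssim \int_{D_{4r}\setminus D_{r/4}}|\nabla m|^2 \to 0$ by finiteness of the total energy. Hence $\int_{\partial D_R}T_{\nu\nu}\,\mathrm{d}s = 0$ for every $R$ (for which $D_R$ is an embedded coordinate disk).

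It remains to unpack $T_{\nu\nu}$ in the boundary frame $\{\nu,\tau\}$. Directly, $T_{\nu\nu} = \left|\dd{m}{\nu}\right|^2 - \lambda^2(\nabla\cdot m)\big(\dd{m}{\nu}\cdot\nu\big) - L$; inserting $L = \tfrac{1}{2}\big(\left|\dd{m}{\nu}\right|^2 + \left|\dd{m}{\tau}\right|^2\big) - \tfrac{\lambda^2}{2}(\nabla\cdot m)^2$ and then using the frame identity $\nabla\cdot m = \dd{m}{\nu}\cdot\nu + \dd{m}{\tau}\cdot\tau$ (basis-independence of the trace of $\nabla m$) to rewrite $(\nabla\cdot m)^2$, one finds
\[
-2\,T_{\nu\nu} = \left|\dd{m}{\tau}\right|^2 - \left|\dd{m}{\nu}\right|^2 + \lambda^2(\nabla\cdot m)\left(\dd{m}{\nu}\cdot\nu - \dd{m}{\tau}\cdot\tau\right),
\]
which is exactly the claimed integrand; multiplying $\int_{\partial D_R}T_{\nu\nu}\,\mathrm{d}s = 0$ by $-2$ then finishes the proof. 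I expect the main obstacle to be the limiting step at the origin: the identity is asserted for every $R$, including disks containing the singular point, so the argument hinges entirely on the quantitative decay $|x|^2|\nabla m|^2\lesssim\int_{D_{4|x|}\setminus D_{|x|/4}}|\nabla m|^2$ of Lemma~\ref{lemma:pointwise estimate} to discard the inner boundary contribution. By comparison, verifying that the constraint term drops and that the anisotropic part assembles into $-\lambda^2(\nabla\cdot m)\,\partial_j m_i$ in $T_{ij}$ is routine algebra.
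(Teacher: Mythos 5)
Your proposal is correct and follows essentially the same route as the paper: both build the traceless, divergence-free stress tensor of $E_\lambda$ (yours is the negative of the paper's, which is immaterial), integrate $\partial_i(x_jT_{ij})=\operatorname{tr}T=0$ over an annulus, kill the inner boundary term via the pointwise decay of Lemma~\ref{lemma:pointwise estimate}, and unpack $T_{\nu\nu}$ in the $\{\nu,\tau\}$ frame using $\nabla\cdot m=(\partial_\nu m\cdot\nu)+(\partial_\tau m\cdot\tau)$. The algebra and the limiting argument at the origin match the paper's proof step for step.
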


    \begin{proof}
Consider the energy density
\[
e_{\lambda}(m) = \frac{1}{2} |\nabla m|^2 - \frac{\lambda^2}{2} (\nabla \cdot m)^2
\]
and the associated stress tensor
\[
T_{\alpha \beta}(m) = e_{\lambda}(m) \delta_{\alpha \beta} - \partial_{\alpha} m \cdot \partial_{\beta} m + \lambda^2 (\nabla \cdot m) \, \partial_{\beta} m_\alpha,
\]
which satisfies $\tr T(m) = 0$. Note that $T(m) \in W^{1,1}_{\mathrm{loc}}(\T^2 \setminus \{0\}; \R^{2 \times 2})$ and
\begin{align*}
\partial_{\alpha} T_{\alpha \beta}(m) 
&= \nabla m \cdot \nabla (\partial_\beta m) - \lambda^2 (\nabla \cdot m) \, \nabla \cdot (\partial_\beta m) 
- \Delta m \cdot \partial_\beta m \\
&\quad - \nabla m \cdot \nabla (\partial_\beta m) + \lambda^2 \nabla (\nabla \cdot m) \cdot \partial_\beta m + \lambda^2 (\nabla \cdot m) \, \nabla \cdot (\partial_\beta m) \\
&= - \Delta m \cdot \partial_\beta m + \lambda^2 \nabla (\nabla \cdot m) \cdot \partial_\beta m = 0,
\end{align*}
for $\beta = 1,2$ in $\T^2 \setminus \{0\}$, where the last equality follows from \eqref{eq:m} and the fact that $\partial_\beta m \perp m$.

Integrating the identity
\[
\partial_\alpha (T_{\alpha \beta}(m) x_\beta) = \tr T(m) = 0
\]
over the annulus $D_R \setminus D_r$ for $R > r > 0$ yields
\[
R \int_{\partial D_R} \nu_\alpha T_{\alpha \beta} \nu_\beta \, d\sigma - r \int_{\partial D_r} \nu_\alpha T_{\alpha \beta} \nu_\beta \, d\sigma = 0.
\]

By Lemma~\ref{lemma:pointwise estimate}, we have
\[
\left| r \int_{\partial D_r} \nu_\alpha T_{\alpha \beta} \nu_\beta \, d\sigma \right| \le c \, r^2 \sup_{|x|=r} |\nabla m|^2 \le c \int_{D_{4r}} |\nabla m|^2 \, dx \to 0
\]
as $r \to 0$.

Finally, observing that
\[
\nu_\alpha T_{\alpha \beta} \nu_\beta = \frac{1}{2} |\nabla m|^2 - \frac{\lambda^2}{2} (\nabla \cdot m)^2 - |\partial_\nu m|^2 + \lambda^2 (\nabla \cdot m) (\partial_\nu m \cdot \nu),
\]
together with the decomposition
\[
|\nabla m|^2 = |\partial_\nu m|^2 + |\partial_\tau m|^2, \qquad 
\nabla \cdot m = (\partial_\nu m \cdot \nu) + (\partial_\tau m \cdot \tau),
\]
completes the proof.
\end{proof}

\begin{lemma}
Let $m \in H^1(\T^2;\S^1)\cap H^3_{\loc}(\T^2\setminus\{0\};\S^1)$ be a critical point of $E_{\lambda}$. 
Then there exist $\delta,\lambda_{0}>0$ and $\gamma \in (0,1)$ depending only on $\delta,\lambda_{0}$ such that for $\|\nabla m\|_{L^2}\le \delta$ and every $0<|\lambda|<\lambda_{0}$,
\begin{align*}
    \int_{D_r} |\nabla m|^{2}\,\d x \le  r^{\gamma} \int_{\mathbb{D}} |\nabla m|^2 \,\d x
\end{align*}
for all $0<r<1$.
\end{lemma}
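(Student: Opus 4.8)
The plan is to reduce the estimate to a first-order differential inequality for $E(r):=\int_{D_r}|\nabla m|^2\,\d{x}$ and then integrate it. Since $\int_{\T^2}|\nabla m|^2\,\d{x}\le\delta^2<4\pi^2$ for $\delta$ small, Lemma~\ref{lemma:liftexistence} furnishes a global lift $m=e^{i\phi}$ with $\phi\in H^1(\T^2)\cap H^3_{\loc}(\T^2\setminus\{0\})$ and $|\nabla\phi|=|\nabla m|$; in particular $\phi$ is single-valued on each circle $\partial D_r$, so the winding of $m$ about $0$ vanishes. By the coarea formula $E$ is absolutely continuous with $E'(r)=\int_{\partial D_r}|\nabla\phi|^2$ for a.e.\ $r$, and it suffices to prove an inequality of the form $\gamma\,E(r)\le r\,E'(r)$: integrating $(\log E)'\ge\gamma/r$ from $r$ to $1$ then gives $E(r)\le r^{\gamma}E(1)$, and $\gamma$ can be taken in $(0,1)$ as claimed.

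First I would derive an energy identity. Projecting the Euler--Lagrange system~\eqref{eq:m} onto $m^{\perp}$ gives $\Delta\phi=\lambda^2\,m^{\perp}\cdot\nabla(\nabla\cdot m)$, so that for $\lambda=0$ the phase is harmonic. Writing $\bar\phi_r$ for the mean of $\phi$ on $\partial D_r$ and integrating by parts on the annulus $D_r\setminus D_\rho$,
\[
\int_{D_r\setminus D_\rho}|\nabla\phi|^2\,\d{x}=\int_{\partial D_r}(\phi-\bar\phi_r)\,\partial_\nu\phi-\int_{\partial D_\rho}(\phi-\bar\phi_r)\,\partial_\nu\phi-\lambda^2\!\int_{D_r\setminus D_\rho}(\phi-\bar\phi_r)\,m^{\perp}\!\cdot\nabla(\nabla\cdot m).
\]
The inner boundary term is shown to vanish as $\rho\to0$, exactly as in the proof of the Pohozaev identity, by using Lemma~\ref{lemma:pointwise estimate} to bound $\rho\int_{\partial D_\rho}|\nabla m|^2$ by the local annular energy. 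This produces
\[
E(r)=\int_{\partial D_r}(\phi-\bar\phi_r)\,\partial_\nu\phi-\lambda^2\int_{D_r}(\phi-\bar\phi_r)\,m^{\perp}\!\cdot\nabla(\nabla\cdot m).
\]

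Next I would estimate the leading boundary term. The sharp Wirtinger inequality on $\partial D_r$, namely $\int_{\partial D_r}|\phi-\bar\phi_r|^2\le r^2\int_{\partial D_r}|\partial_\tau\phi|^2$ (valid because $\phi-\bar\phi_r$ is single-valued with zero mean), together with Cauchy--Schwarz and either the Pohozaev balance~\eqref{eq:Pohozaev_plane} or plain arithmetic--geometric mean, gives
\[
\int_{\partial D_r}(\phi-\bar\phi_r)\,\partial_\nu\phi\le r\Big(\int_{\partial D_r}|\partial_\tau\phi|^2\Big)^{1/2}\Big(\int_{\partial D_r}|\partial_\nu\phi|^2\Big)^{1/2}\le\tfrac{r}{2}\,E'(r),
\]
using $|\nabla\phi|^2=|\partial_\nu\phi|^2+|\partial_\tau\phi|^2$. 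For the remaining $\lambda^2$-term I would integrate by parts once more, exploiting $\nabla\cdot m=m^{\perp}\cdot\nabla\phi$ to rewrite $m^{\perp}\cdot\nabla(\nabla\cdot m)$ as a divergence plus a quadratic remainder; one boundary piece is absorbed into $\tfrac{r}{2}E'(r)$ via Wirtinger, while the interior piece is controlled by $C\lambda^2\bigl(1+\osc_{D_r}\phi\bigr)E(r)$. Choosing $\lambda_0,\delta$ small then yields $E(r)\le\frac{r}{\gamma}E'(r)$, and integration concludes the argument.

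The main obstacle is concentrated at the singular point $0$, where the interior regularity of Lemma~\ref{lemma:smallenergy} is unavailable since every disk $D_r$ contains $0$; consequently all interior manipulations must be performed on $D_r\setminus D_\rho$ and passed to the limit $\rho\to0$ through Lemma~\ref{lemma:pointwise estimate}, precisely as in the Pohozaev derivation. The second, more delicate difficulty is the residual interior term $\lambda^2\int_{D_r}(\phi-\bar\phi_r)\,m^{\perp}\cdot\nabla(\nabla\cdot m)$, which carries the oscillation factor $\phi-\bar\phi_r$. I expect this to be the crux, and it should be closed either by the smallness of $\lambda_0$ combined with a bootstrap on the partially established decay that controls $\osc_{D_r}\phi$, or by exploiting the null (divergence--curl) structure of the quadratic remainder through a Wente/$\mathcal{H}^1$--$\BMO$ estimate, so that only $rE'(r)$-type quantities and an arbitrarily small multiple of $E(r)$ remain on the right-hand side.
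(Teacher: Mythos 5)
Your overall strategy is the same as the paper's: both arguments reduce the claim to the differential inequality $\gamma\int_{D_r}|\nabla m|^2\le r\int_{\partial D_r}|\nabla m|^2$, obtained by integrating by parts against a mean-value-corrected comparison function, controlling the outer boundary term by the Poincar\'e/Wirtinger inequality on the circle combined with the Pohozaev identity \eqref{eq:Pohozaev_plane} to trade $|\partial_\nu|^2$ for $|\partial_\tau|^2$, and removing the singularity at $0$ by working on $D_r\setminus D_\rho$ and invoking Lemma~\ref{lemma:pointwise estimate}. The genuine difference is the comparison device: the paper uses the Sacks--Uhlenbeck radially symmetric harmonic replacement $h$, matched to the mean value of $m$ on \emph{every} dyadic circle $\partial D_{2^{-k}}$, whereas you subtract a single constant $\bar\phi_r$ (the mean of the lift on the outermost circle only).

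This substitution is where the gap lies. Your identity leaves two terms carrying the factor $\phi-\bar\phi_r$ \emph{inside} $D_r$: the inner boundary term $\int_{\partial D_\rho}(\phi-\bar\phi_r)\,\partial_\nu\phi$ as $\rho\to0$, and the residual interior term $\lambda^2\int_{D_r}(\phi-\bar\phi_r)\,m^\perp\cdot\nabla(\nabla\cdot m)$ (whose quadratic remainder $(\nabla\cdot m)(\nabla\cdot m^\perp)$ after a further integration by parts still carries this factor, exactly as in \eqref{eq:weakangletest}--\eqref{ineq:boundingangle}). A priori, $\|\phi-\bar\phi_r\|_{L^\infty(D_r\setminus D_\rho)}$ is \emph{not} bounded by $c\delta$: summing the oscillations over the dyadic annuli between $\rho$ and $r$ via the Courant--Lebesgue estimate only gives a bound of order $\sqrt{|\ln\rho|}\,\delta$, and making it uniform requires precisely the Morrey decay you are trying to prove --- so the bootstrap you suggest is circular unless set up with care, and the Wente route is not carried out. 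The paper's harmonic replacement is designed exactly to avoid this: because $h$ matches the mean of $m$ on each dyadic circle, one gets the \emph{local} estimate $\|m-h\|_{L^\infty(A_k)}\le c\|\nabla m\|_{L^2(\tilde A_k)}\le c\delta$ on every annulus (cf.\ \eqref{eq:estimateh}), the interior boundary terms telescope and vanish, and the cubic and $\lambda^2$ terms are absorbed with a constant $c(\delta+\lambda^2)$ without ever needing global oscillation control of the phase. To repair your argument you would either have to adopt the dyadic matching (at which point you recover the paper's proof, with $\phi-\bar\phi$ replaced annulus-by-annulus) or genuinely exploit the div--curl structure of $(\nabla\cdot m)(\nabla\cdot m^\perp)$ via a Wente-type estimate; as written, the step ``the interior piece is controlled by $C\lambda^2(1+\osc_{D_r}\phi)E(r)$'' is not available.
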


\begin{proof}
The argument proceeds by constructing a radially symmetric harmonic replacement of $m$, following the strategy of Sacks and Uhlenbeck \cite{SacksUhlenbeckImmersions}. For dyadic annuli 
\begin{align*}
A_k = A(2^{-k},2^{-k+1})=D_{2^{-k+1}} \setminus \overline{D_{2^{-k}}}
\end{align*}
we define a radially symmetric harmonic replacement $h$ by prescribing the mean values 
\begin{align*}
    h(2^{-k}) := \fint_{\partial D_{2^{-k}}} m \, \d \sigma, 
    \qquad 
    \Delta h = 0 \text{ in } A_k, \quad h \text{ radial.}
\end{align*}
As in \cite{SacksUhlenbeckImmersions}, one obtains 
\begin{align}\label{eq:estimateh}
    \|m-h\|_{L^{\infty}(A_k)} \le c \|\nabla m\|_{L^{2}(\tilde{A}_k)} \le c\delta,
\end{align}
where $\tilde{A}_k = A(2^{-k-1},2^{-k+2})$.  
The $\dot{H}^1$-distance can be bounded via 
\begin{align*}
    \int_{\D} |\nabla(m - h)|^2 \, \d{x} &= \sum_{k=1}^{\infty} \int_{\partial A_k} (m - h) \cdot \partial_{\nu}m  \, \d{\sigma} -  \sum_{k=1}^{\infty}\int_{A_k} (m-h) \cdot \Delta m \, \d{x} .
\end{align*}
For every $k\in \N$, by the definition of $h$ we have
\begin{align*}
    \int_{\partial D_{2^{-k}}} (m-h)\cdot h' \, \d{\sigma} =\int_{\partial D_{2^{-k}}} (m-h) \, \d{\sigma}  \cdot h' = 0.
\end{align*}
Therefore, we conclude 
\begin{align*}
  \sum_{k=1}^{\infty} \left.\int_{\partial D_{r}} (m - h) \cdot \partial_{\nu}m  \, \d{\sigma} \right|_{r=2^{-k}}^{2^{-k+1}}= \int_{\S^1} (m - h) \cdot \partial_{\nu}m  \, \d{\sigma},
\end{align*}
since by \eqref{eq:estimateh} and Lemma~\ref{lemma:pointwise estimate}
\begin{align*}
\left|\int_{\partial D_{2^{-N}}} (m - h) \cdot \partial_{\nu}m  \, \d{\sigma} \right|&\le c2^{-N} \|m - h\|_{L^{\infty}(\partial D_{2^{-N}})} \|\nabla m\|_{L^{\infty}(\partial D_{2^{-N}})} \\
&\le c \int_{\tilde A_N} |\nabla m|^2 \d x \to 0
\end{align*}
for $N \to \infty.$ Using \eqref{eq:m}, we obtain 
\begin{align*}
    \int_{A_k} (m-h) \cdot \Delta m \, \d{x}  &= -\int_{A_k} (m-h) \cdot (|\nabla m|^2 m)\, \d{x} \\
    &+ \lambda^2 \int_{A_k} (m-h)\cdot \nabla(\nabla\cdot m) \d x\\
    &-\lambda^2 \int_{A_k} ((m-h)\cdot m )(\nabla(\nabla\cdot m)\cdot m ) .
\end{align*}
With $\|m-h\|_{L^{\infty}} \le c \delta$, we have 
\begin{align*}
    \left|\int_{A_k} (m-h) \cdot (|\nabla m|^2 m)\, \d{x} \right| \le c \delta \int_{A_k} |\nabla m|^2 \d x.
\end{align*}
The remaining terms can be estimated via integration by parts 
\begin{align*}
    \int_{A_k} (m-h)\cdot \nabla(\nabla\cdot m) \d x &= \int_{\del A_k} ((m-h)\cdot \nu)(\nabla\cdot m) \d x\\
    &-\int_{A_k} \nabla\cdot(m-h)(\nabla\cdot m) \d x
\end{align*}
and therefore 
\begin{align*}
   \sum_{k=1}^{\infty} \int_{A_k} (m-h)\cdot \nabla(\nabla\cdot m) \d x &\le \int_{\S^1} |m-h| |\nabla m| \d{\sigma}\\
   &+ \frac{1}{2}\int_{\D} |\nabla(m-h)|^2 + |\nabla m|^2 \d{x}.
\end{align*}
The nonlinear term is analogous. Therefore, we have 
\begin{align*}
    \int_{\D}|\nabla (m-h)|^2 \d{x} &\le  (1+c\lambda^2)\left(\int_{\S^1} |m-h|^2\d{\sigma}\right)^{\frac12}\left(\int_{\S^1} |\nabla m|^2\d{\sigma}\right)^{\frac12}\\
    &+ c(\delta+\lambda^2) \int_{\D}|\nabla m|^2 \d x.
\end{align*}
Using the Poincaré inequality on $\mathbb{S}^1$ and $\partial_\tau h\equiv0$, together with Pohozaev's identity for $E_\lambda$, we infer
\begin{align*}
    (1 - c(\delta+\lambda^2)) \int_{\mathbb{D}} |\nabla m|^2 
    \le (2 + c\lambda^2) \int_{\mathbb{S}^1} |\nabla m|^2 .
\end{align*}
Setting 
\begin{align*}
    \gamma := \frac{1 - c(\delta+\lambda_0^2)}{2 + c\lambda_0^2} \in (0,1),
\end{align*}
and applying a standard scaling argument, we obtain
\begin{align*}
    \gamma \int_{D_r} |\nabla m|^2 \le r \int_{\partial D_r} |\nabla m|^2,
\end{align*}
which integrates to the desired inequality.
\end{proof}

By Morrey’s Dirichlet growth lemma for $\phi$ we conclude that $\phi\in C^{0,\frac{\gamma}{2}}$ with, in particular,
\begin{align*}
    \|\phi-\phi(0)\|_{L^{\infty}}^2 \le c \int_{\D} |\nabla m|^2 \d{x} \le c \delta^2
\end{align*}
for some constant $c>0$. For $\delta,\lambda_0>0$ sufficiently small, \eqref{ineq:boundingangle} forces $\nabla\phi\equiv0$, hence $\phi\equiv\phi_0$. This proves Theorem~\ref{thm:energygap}.

\begin{thm}\label{thm:energygap}
Let $m \in H^1(\T^2;\S^1)\cap H^3_{\loc}(\T^2\setminus\{0\};\S^1)$ be a critical point of $E_{\lambda}$. There exist $\lambda_0,\delta>0$ such that, if $\|\nabla m\|_{L^2} \le \delta$ and $0<|\lambda|<\lambda_0$, then $m\equiv c$ for some constant $c \in\S^1$.
\end{thm}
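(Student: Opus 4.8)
The plan is to reduce the rigidity statement to the single self-improving inequality \eqref{ineq:boundingangle} and then to supply the missing pointwise control of the phase. Since $\|\nabla m\|_{L^2}\le\delta<2\pi$, Lemma~\ref{lemma:liftexistence} furnishes a lift $m=e^{i\phi}$ with $\phi\in H^1(\T^2)$, unique up to an additive constant. Testing \eqref{eq:weakangletest} with $\eta=\phi$ and using $m^\perp\cdot\nabla m=\nabla\phi$ gives the self-improving bound \eqref{ineq:boundingangle}, namely
\[
\int_{\T^2}|\nabla\phi|^2\,\d x\le c\lambda^2\bigl(1+\|\phi\|_{L^\infty}\bigr)\int_{\T^2}|\nabla\phi|^2\,\d x.
\]
Here one should really read the right-hand side with the oscillation $\|\phi-\phi(0)\|_{L^\infty}$ in place of $\|\phi\|_{L^\infty}$: the only term in \eqref{eq:weakangletest} that pairs with $\eta$ rather than $\nabla\eta$ integrates to zero against constants, by the rotational invariance of $E_\lambda$, so it may be estimated against $\eta-\phi(0)$. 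Granting a bound on this oscillation, the conclusion is immediate, and so the entire difficulty is concentrated in controlling $\phi$ pointwise.

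Because $H^1\not\hookrightarrow L^\infty$ in two dimensions, energy smallness alone does not bound the oscillation of $\phi$, and the extra regularity carried by the critical-point equation \eqref{eq:m} must be exploited. I would run the Sacks--Uhlenbeck scheme exactly as assembled above: the small-energy regularity estimate of Lemma~\ref{lemma:smallenergy} yields interior gradient bounds on disks away from the origin; these transfer to the scale-invariant annular bound $|x|^2|\nabla m(x)|^2\lesssim\int_{D_{4|x|}\setminus D_{|x|/4}}|\nabla m|^2$ of Lemma~\ref{lemma:pointwise estimate}; and, combined with the anisotropic Pohozaev identity \eqref{eq:Pohozaev_plane} and a dyadic harmonic-replacement comparison, they produce the Dirichlet growth estimate $\int_{D_r}|\nabla m|^2\le r^\gamma\int_{\D}|\nabla m|^2$ for some $\gamma\in(0,1)$. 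Morrey's Dirichlet growth lemma then yields $\phi\in C^{0,\gamma/2}$ together with the quantitative oscillation bound $\|\phi-\phi(0)\|_{L^\infty}^2\lesssim\int_{\D}|\nabla m|^2\le c\delta^2$.

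With this estimate the argument closes. Choosing $\delta$ and $\lambda_0$ so small that $c\lambda^2(1+c\delta)<1$ for all $0<|\lambda|<\lambda_0$, the displayed inequality forces $\int_{\T^2}|\nabla\phi|^2=0$. Hence $\phi$ is constant and $m\equiv c$ for some $c\in\S^1$, which is the assertion of Theorem~\ref{thm:energygap}.

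I expect the main obstacle to be the harmonic-replacement step underlying the growth estimate, where the anisotropy genuinely intervenes. In contrast to the classical harmonic-map situation, the term $\int_{A_k}(m-h)\cdot\Delta m$ over dyadic annuli acquires $\lambda^2$-contributions from $\nabla(\nabla\cdot m)$; after integration by parts these generate boundary fluxes and divergence terms carrying an extra factor $\nabla\cdot m$ that must be absorbed into the Dirichlet energy via the Poincaré inequality on $\S^1$ and the vanishing of the tangential derivative of the radial replacement $h$. Keeping the resulting constant $\gamma=(1-c(\delta+\lambda_0^2))/(2+c\lambda_0^2)$ inside $(0,1)$ uniformly, while discarding the inner boundary contribution at the singularity $0$ by the decay supplied by Lemma~\ref{lemma:pointwise estimate}, is the delicate point; once this growth estimate is in hand, the remainder is a routine adaptation of the Sacks--Uhlenbeck rigidity.
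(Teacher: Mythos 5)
Your proposal follows essentially the same route as the paper: lift via Lemma~\ref{lemma:liftexistence}, the self-improving inequality \eqref{ineq:boundingangle}, then the Sacks--Uhlenbeck scheme (small-energy regularity, annular pointwise bounds, the anisotropic Pohozaev identity, dyadic harmonic replacement) to obtain the Dirichlet growth estimate, Morrey's lemma for the oscillation bound on $\phi$, and absorption for small $\delta,\lambda_0$. Your remark that \eqref{ineq:boundingangle} should be read with the oscillation of $\phi$ rather than $\|\phi\|_{L^\infty}$ is handled in the paper by the normalization $\int\phi\,\mathrm{d}x=0$ fixed in Lemma~\ref{lemma:liftexistence}, which makes the two quantities comparable.
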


\section{The large-domain limit}\label{sec:eps0}
We fix $\lambda$ and consider the limits of $\m_{\lambda,\eps}$, minimizer from Theorem~\ref{thm:existence} with $\D$ replaced by $\T^2$, 
as $\varepsilon \to 0$, obtaining bimeron configurations on $\R^2$.

\begin{thm} \label{thm:R2existence}
Let $0 < |\lambda| < \lambda_0 \ll 1$ be fixed, and let $\eps_{k} \to 0$ as $k \to \infty$.
There exist scales $R_{k} \to 0$ and points $x_{k} \in \T^2$ such that the rescaled maps
\[
\tm_{k}(x) = \m_{\lambda,\eps_k}\bigl(R_{k}x + x_{k}\bigr)
\]
satisfy, up to a subsequence, 
\[
\tm_{k} \;\rightharpoonup\; \m \quad \text{in } H^{2}_{\rm loc}(\R^{2};\S^2),
\]
and, for some $\eps_0 > 0$, the limit $\m \in H^1(\R^2;\S^2)$ is a unit-degree critical point of
\[
\int_{\R^2} 
\frac{1}{2}|\nabla \m|^2 
+ \lambda \, (\nabla \cdot \m)\, \frac{m_3}{\eps_0}  
+ \frac{1}{2} \left( \frac{m_3}{\eps_0} \right)^2 \, \mathrm{d}x.
\]
\end{thm}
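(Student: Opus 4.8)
The plan is to carry out a blow-up analysis of the minimizers $\m_{\lambda,\eps_k}$ on the torus as $\eps_k\to0$, in close analogy with Theorem~\ref{thm:bubbling}, but now with the anisotropy scale rather than the chirality scale vanishing. The essential difference is that, with $\lambda$ fixed and $\eps_k\to0$, the effective intrinsic core scale is governed by $R_{\lambda,\eps}\sim\eps$, as announced in the introduction, so one must rescale at the $\eps$-scale and absorb the easy-plane penalty into the rescaled functional.

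First I would set up the concentration scale exactly as in Section~\ref{sec:conformallimit}, defining
\[
R_k := \sup\Big\{\rho>0:\ \sup_{x\in\T^2}\int_{D_\rho(x)}|\nabla\m_{\lambda,\eps_k}|^2\,\d x<\delta_0^2\Big\},
\]
and showing $R_k\to0$. The upper bound of Proposition~\ref{prop:upper_bound} is $\eps$-independent, giving $E_{\lambda,\eps_k}(\m_{\lambda,\eps_k})<4\pi$ uniformly; together with the lower bounds \eqref{prop:toplowerbound} this controls $\|\nabla\m_{\lambda,\eps_k}\|_{L^2}$ and $\|m_{\eps_k,3}/\eps_k\|_{L^2}$ uniformly in $k$. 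That nontrivial degree-one topology cannot survive at a uniform regular scale (by small-energy regularity and the impossibility of a nonconstant limit with constant/periodic data of trivial degree) forces concentration, so $R_k\to0$. Here the energy gap of Theorem~\ref{thm:energygap} does the decisive work: away from the core the out-of-plane component is suppressed as $\eps_k\to0$, the far field relaxes to an $\S^1$-valued anisotropic harmonic map of small energy, and the gap forces it to be trivial, confining all topological charge to the single core region. I would then select points $x_k$ realizing the concentration, $\int_{D_{R_k}(x_k)}|\nabla\m_k|^2\ge\delta_0^2/2$, and pass to blow-up coordinates $\tm_k(x)=\m_{\lambda,\eps_k}(R_kx+x_k)$.

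The next step is to identify the limiting equation. Under the rescaling, the Euler--Lagrange system from the proof of Theorem~\ref{thm:existence} becomes, in the $y$-variable, a perturbed harmonic map equation whose tension field is controlled in $L^2_{\rm loc}$ by terms of size $\lambda^2\|\nabla\tm_k\|_{L^2}^2$ and $\|\tm_{k,3}\|_{L^2}^2$, both uniformly bounded. The crucial ratio to track is $R_k/\eps_k$: the DMI and anisotropy terms in the rescaled density carry the factor $R_k/\eps_k$, so I would show $R_k/\eps_k\to\eps_0^{-1}$ for some finite $\eps_0>0$ along a subsequence. The lower limit $\eps_0>0$ follows because a strictly positive DMI contribution is needed to push the energy below $4\pi$ (a purely harmonic bubble has energy exactly $4\pi$), while finiteness of $\eps_0^{-1}$ follows from the quantitative rigidity \eqref{eq:rigidity} bounding the excess energy, which caps how much the anisotropic penalty can contribute. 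With $R_k/\eps_k\to\eps_0^{-1}$ fixed, the rescaled functionals $\Gamma$-converge to exactly the stated $\R^2$ energy with parameter $\eps_0$, and the $L^2_{\rm loc}$ bound on the rescaled tension field yields, via the interior $H^2$ estimate used throughout Section~\ref{sec:Energygap}, weak $H^2_{\rm loc}$ convergence $\tm_k\rightharpoonup\m$ to a critical point of the limit functional. The no-neck energy decay \eqref{eq:energy_no_neck}, which holds for $L^2$-bounded tension fields even in the periodic setting, guarantees that the limit captures the full unit degree and that $\m\in H^1(\R^2;\S^2)$.

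The main obstacle I expect is establishing the precise scaling $R_k\sim\eps_k$, i.e. ruling out both degenerate regimes $R_k/\eps_k\to0$ and $R_k/\eps_k\to\infty$ along subsequences. If $R_k/\eps_k\to\infty$, the anisotropic penalty $\tfrac12(m_3/\eps)^2$ would blow up on the core in rescaled variables, contradicting the uniform energy bound $<4\pi$; this direction should follow from the Pohozaev-type identity \eqref{eq:Pohozaev} applied on the rescaled core, forcing the rescaled out-of-plane energy to stay bounded. The reverse degeneracy $R_k/\eps_k\to0$ is subtler: one must exclude that the chiral term vanishes in the limit and the bubble degenerates into a purely conformal profile carrying exactly energy $4\pi$, which the strict inequality $E<4\pi$ and the sharp rigidity-based lower bounds of \cite{DengRaduLamyConformalBimeron} should preclude. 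Reconciling these two estimates to pin $\eps_0\in(0,\infty)$, and verifying that the far-field relaxation is quantitative enough for the energy-gap argument to apply on annular regions whose inner radius shrinks like $R_kL$, is where the real work lies.
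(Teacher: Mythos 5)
Your overall architecture matches the paper's: far-field relaxation via the energy gap of Theorem~\ref{thm:energygap}, pinning the core scale $R_k/\eps_k\to\eps_0^{-1}$ by excluding the two degenerate regimes, and passing to the limit in the rescaled Euler--Lagrange equation. However, there is a genuine gap at the degree-preservation step. You invoke the no-neck energy decay \eqref{eq:energy_no_neck}, which in Section~\ref{sec:conformallimit} rests on the tension field being uniformly bounded in $L^2$. In the large-domain limit this fails: the tension field of $\m_{\lambda,\eps_k}$ contains the term $m_{k,3}/\eps_k^2$, and the available bound $\|m_{k,3}/\eps_k\|_{L^2}\le C$ only gives $\|m_{k,3}/\eps_k^2\|_{L^2}\le C/\eps_k\to\infty$. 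The Ding--Tian/Qing/Lin--Wang machinery therefore does not apply on the neck $D_\rho\setminus D_{R_kL}$ in the unrescaled variables, and this is precisely the new difficulty of the regime $\eps\to0$. The paper replaces it by a different mechanism: the quantitative rigidity estimate \eqref{eq:rigidity} applied to a cut-off modification $\bm_k$ of $\m_k$, combined with the explicit characterization of near-optimal M\"obius maps from \cite{DengRaduLamyConformalBimeron} and the explicit neck computation of Lemma~\ref{lemma:neckharmonic}, yielding in Lemma~\ref{lemma:energydecay} a neck energy of order $\lambda^2$ and hence $|\deg\m+1|\le c\lambda^2$, which forces $\deg\m=-1$ only for $\lambda_0$ small. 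Your plan as written would not close this step.

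A secondary discrepancy: your argument for excluding $R_k/\eps_k\to\infty$ (the anisotropic penalty ``blows up,'' controlled via the Pohozaev identity \eqref{eq:Pohozaev}) is not how Proposition~\ref{prop:notinfty} proceeds and is not obviously correct, since $m_3$ may vanish fast enough that the penalty stays bounded. The paper's argument is that in this regime the divergent coefficient $(R_k/\eps_k)^2$ forces the blow-up limit to take values in the equator $\S^1\times\{0\}$, and a finite-energy $\S^1$-valued harmonic map on $\R^2$ is constant, contradicting the energy concentration on $D_1$. The exclusion of $R_k/\eps_k\to0$ in your plan does agree in substance with Proposition~\ref{prop:notzero}.
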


\subsection{Small energy regularity} \label{subsec:higherorder}

We adopt the notation $e_{\varepsilon}(\m) = e_{0,\varepsilon}(\m)$, where
\begin{align*}
e_{\varepsilon}(\m)
    = \frac{1}{2}\left( |\nabla \m|^2 + \left(\frac{m_3}{\varepsilon}\right)^2 \right),
\end{align*}
so that, including the DMI term \eqref{eq:DMI}, the density of the full energy is
\begin{align*}
e_{\lambda,\varepsilon}(\m)
    = e_{\varepsilon}(\m) + \frac{\lambda}{\varepsilon}\, e_{\mathrm{DMI}}(\m).
\end{align*}

\begin{prop} \label{thm:smallenergyregularity}
There exist positive constants $\lambda_0, \delta_0,$ and $C$ such that if 
\begin{align*}
 \int_{D_R} e_\eps(\m) < \delta_0^2 
\end{align*}
for some $R>0$ and $\m$ is a critical point of $E_{\lambda,\eps}$ with $0<|\lambda|<\lambda_0$, then for $k=1,2$
\begin{align*}
R^{2k} \int_{D_{R/4}} \left|\nabla^{k+1} \m \right|^2 + \frac{|\nabla^{k} m_3|^2}{\eps^2} 
\le C \int_{D_R} e_\eps(\m),
\end{align*}
and in particular
\begin{align*}
\sup_{D_{R/4}} e_\eps(\m) \le C \fint_{D_R} e_\eps(\m).
\end{align*}
\end{prop}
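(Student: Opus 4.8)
The plan is to run a higher-order Caccioppoli/bootstrap scheme adapted to the anisotropic, $\eps$-dependent structure, in the spirit of Schoen's $\eps$-regularity for harmonic maps, closing it by means of the two coercive quantities that the anisotropy term produces for free. First I would reduce to $R=1$ by the scaling $\m(x)\mapsto\m(Rx)$, $\eps\mapsto\eps/R$, under which criticality is preserved and the scale-invariant quantity $\int_{D_R}e_\eps(\m)$ as well as each claimed inequality is left unchanged. By the regularity theory already invoked in the proof of Theorem~\ref{thm:existence}, $\m$ may be taken smooth, so the integrations by parts below are licit (one may instead argue with difference quotients). Writing the Euler--Lagrange system in tangential form
\[
\Delta\m + |\nabla\m|^2\m = \frac{\lambda}{\eps}\,P_{\m^\perp}\mDMI + \frac{m_3}{\eps^2}\,P_{\m^\perp}\e{3},
\]
with $P_{\m^\perp}$ the orthogonal projection onto $\TM$, the third component obeys the screened Poisson equation $-\Delta m_3 + \eps^{-2}m_3 = \mu\,m_3 - \tfrac{\lambda}{\eps}\nabla\!\cdot\! m$, where $\mu$ is the scalar Lagrange multiplier (whose leading part is $|\nabla\m|^2$, with $\eps$-weighted remainders). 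The decisive structural fact is that $m_3/\eps$ is a scalar field of small $L^2$-norm, $\int_{D_1}(m_3/\eps)^2<2\delta_0^2$, whose weighted Dirichlet energy $\eps^{-2}\int|\nabla m_3|^2$ will emerge with the correct \emph{coercive} sign.

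For the case $k=1$ I would test the vector equation with $\Delta\m\,\eta^2$, where $\eta$ is a cutoff with $\eta\equiv1$ on $D_{1/2}$. The diffusion term yields $-\int|\Delta\m|^2\eta^2$, and one integration by parts on the anisotropy term yields $-\eps^{-2}\int|\nabla m_3|^2\eta^2$, so that the \emph{two} coercive quantities $\int|\Delta\m|^2\eta^2$ and $\eps^{-2}\int|\nabla m_3|^2\eta^2$ appear together. Every other contribution is then absorbed. The quartic term $\int|\nabla\m|^4\eta^2$ from $\mu$ is controlled by Ladyzhenskaya's interpolation inequality together with the smallness of $\int_{D_1}|\nabla\m|^2$; the mixed term $\eps^{-2}\int m_3^2|\nabla\m|^2\eta^2 = \int(m_3/\eps)^2|\nabla\m|^2\eta^2$ is split by Young and treated the same way, Ladyzhenskaya applied to $m_3/\eps$ bounding $\int(m_3/\eps)^4\eta^2$ by the small factor $\delta_0^2$ times the coercive quantity $\eps^{-2}\int|\nabla m_3|^2\eta^2$. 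The genuinely delicate pieces are the DMI cross-terms $\tfrac{\lambda}{\eps}\int P_{\m^\perp}\mDMI\cdot\Delta\m\,\eta^2$: the pairing of $\nabla\!\cdot\! m$ against the out-of-plane $\Delta m_3$ must first be integrated by parts into $\tfrac{\lambda}{\eps}\int\nabla(\nabla\!\cdot\! m)\cdot\nabla m_3\,\eta^2$, after which Young distributes it as $\delta'\int|\nabla^2 m|^2\eta^2 + c\lambda^2\eps^{-2}\int|\nabla m_3|^2\eta^2$, both absorbable (the second using $\lambda_0$ small); the complementary pairing of $\nabla m_3$ against the in-plane $\Delta m$ is dispatched by Young directly, and the projection corrections are cubic and absorbed as above. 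This yields $\int_{D_{1/2}}|\nabla^2\m|^2 + \eps^{-2}|\nabla m_3|^2 \le C\int_{D_1}e_\eps(\m)$.

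The case $k=2$ follows by the identical mechanism applied to the differentiated system: I would differentiate the Euler--Lagrange equation, test with $\Delta(\partial_j\m)\,\eta^4$, reproduce the coercive quantities $\int|\nabla^3\m|^2\eta^4$ and $\eps^{-2}\int|\nabla^2 m_3|^2\eta^4$, and absorb the lower-order products (now involving $\nabla^2\m$ and $\nabla m_3$) using the $k=1$ bound together with Ladyzhenskaya and the smallness of $\lambda_0,\delta_0$. For the pointwise statement I would combine $k=1,2$ on a chain of nested balls $D_{R/4}\subset D_{R/2}\subset D_R$, obtaining $\nabla\m$ and $m_3/\eps$ bounded in $H^2(D_{R/4})$ with $\|(\nabla\m,\,m_3/\eps)\|_{H^2(D_{R/4})}^2\lesssim R^{-2}\int_{D_R}e_\eps(\m)$ after scaling back; the two-dimensional embedding $H^2\hookrightarrow L^\infty$ then gives $\sup_{D_{R/4}}e_\eps(\m)\lesssim\fint_{D_R}e_\eps(\m)$ with exactly the right homogeneity, since $e_\eps$ is quadratic in $(\nabla\m,\,m_3/\eps)$.

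The main obstacle is precisely that the tension field is \emph{not} $L^2$-bounded uniformly in $\eps$: it carries the singular factors $\lambda/\eps$ and $1/\eps^2$, so the classical approximate-harmonic-map regularity cannot be cited. The resolution, which is the heart of the argument, is that the singular anisotropy term is not an enemy but the very source of the coercive quantity $\eps^{-2}\int|\nabla m_3|^2$, while every $\eps$-weighted term is either quartic—absorbed by Ladyzhenskaya through the small $L^2$-norms of $\nabla\m$ and $m_3/\eps$—or a DMI cross-term absorbed after integration by parts using $\lambda_0$ small. Arranging the bookkeeping so that all these absorptions close \emph{simultaneously} at each order, in particular routing every out-of-plane second derivative through an integration by parts so that it pairs with $\nabla m_3$ against the coercive term rather than generating an uncontrolled $\eps^{-2}\int|\nabla m|^2$, is the technical crux.
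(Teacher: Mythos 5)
Your proposal is correct, and the core of it --- testing the Euler--Lagrange system with $\eta^2\Delta\m$ and with the differentiated equation against $\nabla\Delta\m$ times a cutoff, integrating the DMI cross-terms by parts so that the out-of-plane second derivatives pair with $\nabla m_3$ against the coercive quantity $\eps^{-2}\int|\nabla m_3|^2\eta^2$, and absorbing the remaining terms for $\lambda_0$ small --- is exactly the content of the paper's preparatory lemma on the perturbative DMI bounds and of the iteration \eqref{eq:iteration}. Where you genuinely diverge is in the endgame. The paper treats the higher-order energy estimates as having constants that depend on $\sup_{D_R}e_\eps(\m)$, and removes that dependence by Schoen's trick: it maximizes $(2-\sigma)^2\sup_{D_\sigma}e_\eps(\m)$, rescales so that the energy density is pointwise bounded by $4$ and equals $1$ at the origin, and derives a contradiction from $H^2\hookrightarrow L^\infty$ together with the smallness of the total energy; the pointwise bound then feeds back to give the $H^3$ estimate. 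You instead close the Caccioppoli estimates directly, using Ladyzhenskaya interpolation and the smallness of $\int_{D_R}|\nabla\m|^2$ and $\int_{D_R}(m_3/\eps)^2$ to absorb the quartic and mixed terms without any a priori pointwise control, and then read off $\sup_{D_{R/4}}e_\eps(\m)$ from the resulting $H^2$ bound on $(\nabla\m,\,m_3/\eps)$ by Sobolev embedding. This is the Sacks--Uhlenbeck-style closure that the paper itself uses for the anisotropic $\S^1$-valued problem in Lemma~\ref{lemma:smallenergy}, so the mechanism is certainly available; it buys a more linear argument at the price of the simultaneous bookkeeping you rightly identify as the crux (in particular, the $k=2$ absorption needs the $k=1$ bound on the support of the larger cutoff, so the nesting of balls must be arranged accordingly). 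The paper's Schoen-trick route is more robust in that the rescaled density is bounded by construction, so no term needs to be absorbed by integral smallness alone. Both routes are valid and yield the stated $\eps$-independent constants.
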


The bounds are independent of $\eps$, and by scaling, they are also independent of $R$. The proof strategy is standard: it iterates higher-order energy estimates of the form
\begin{equation} \label{eq:iteration}
\int_{D_{R/4}} e_\eps(\nabla^2 \m) \lesssim \int_{D_{R/2}} e_\eps(\nabla \m) + \int_{D_{R/2}} e_\eps(\m) \lesssim \int_{D_R} e_\eps(\m),
\end{equation}
with constants depending only on $R$ and $\sup_{D_R} e_\eps(\m)$. Using Schoen's trick, one deduces a gradient bound, which in turn yields the $H^3$ bounds.  

The key is the $\eps$-independence of the perturbative bounds for the DMI terms
\begin{align*}
\mathcal{D}(\m)= (1-\m \otimes \m) \binom{-\nabla m_3}{\nabla \cdot m}
= \binom{-\nabla m_3-2 e_{\rm DMI}(\m) m}{\nabla \cdot m -2 e_{\rm DMI}(\m) m_3},
\end{align*}
where 
\begin{align*}
e_{\rm DMI}(\m) = \frac{1}{2} \Big( m_3 (\nabla \cdot m) - (m \cdot \nabla) m_3 \Big),
\end{align*}
when testing with $\eta^2 \Delta \m$ and $\nabla \cdot (\eta^2 \nabla \Delta \m)$. The essence of \eqref{eq:iteration} is captured in the following lemmas.

\begin{lemma} 
For any smooth $\m$ and compactly supported $\eta$,
\begin{align*}
\frac{1}{\eps} \left|  \int  \mathcal{D}(\m) \, \Delta \m \, \eta^2 \,\d{x} \right| 
&\lesssim \int \left( |\nabla^2 m|^2 + |\nabla m|^4 +
\frac{1}{\eps^2} |\nabla m_3|^2 \right) \eta^2 \,\d{x} 
\\ &+ \int \left( |\nabla m|^2 + \frac{1}{\eps^2} m_3^2 \right) |\nabla \eta|^2 \,\d{x} 
\end{align*}
and
\begin{align*}
\frac{1}{\eps} \left|  \int \nabla \mathcal{D}(\m) \cdot \nabla  \Delta  \m \, \eta^2 \,\d{x} \right| 
&\lesssim \int \left( |\nabla \Delta m|^2 + \frac{|\nabla^2 m_3|^2}{\eps^2} \right) \eta^2 \,\d{x} \\
 & + \int \left( \frac{m_3}{\eps} \right)^2 \left(|\nabla \m|^4 + |\nabla^2 \m|^2\right) \eta^2 \,\d{x} 
 + \int |\nabla^2 m|^2 |\nabla \eta|^2 \,\d{x} .
\end{align*}

\end{lemma}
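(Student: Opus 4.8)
The only real enemy here is the explicit prefactor $1/\eps$: every factor in $\mathcal{D}(\m)$ is first order and $\Delta\m$ is second order, so a naive Cauchy--Schwarz would leave a stray power of $1/\eps$ that cannot be absorbed. The plan is to exploit two structural features built into $\mathcal{D}$ and $e_{\mathrm{DMI}}$. First, the target constraint $|\m|=1$ gives $\m\cdot\Delta\m=-|\nabla\m|^2$, which collapses the normal part of $\Delta\m$ and turns the projection contribution $-2e_{\mathrm{DMI}}(\m)\,\m$ into a genuine energy-type term. Second, and crucially, every surviving summand secretly carries a factor of $m_3$ or $\nabla m_3$, never a bare $1/\eps$ against purely in-plane data. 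Pairing the $1/\eps$ with such a factor through Young's inequality converts it into the energetically natural objects $m_3/\eps$ and $\nabla m_3/\eps$, which are exactly the quantities on the right-hand side.

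Concretely, for the first estimate I would expand $\mathcal{D}(\m)\cdot\Delta\m$ and use $\m\cdot\Delta\m=-|\nabla\m|^2$ to rewrite it as $-\nabla m_3\cdot\Delta m+(\nabla\cdot m)\,\Delta m_3+2\,e_{\mathrm{DMI}}(\m)\,|\nabla\m|^2$. The first term is handled directly by Young against $|\nabla^2 m|^2$ and $\eps^{-2}|\nabla m_3|^2$. The second term is the only place where $1/\eps$ meets a second derivative of $m_3$, which is not an admissible right-hand side quantity; here I would integrate by parts to move one derivative onto $\nabla\cdot m$, producing $\nabla(\nabla\cdot m)\cdot\nabla m_3$ (Young $\to |\nabla^2 m|^2+\eps^{-2}|\nabla m_3|^2$) together with a cutoff remainder $(\nabla\cdot m)\,\nabla m_3\cdot\nabla\eta^2$ that Young sends onto $|\nabla m|^2|\nabla\eta|^2+\eps^{-2}|\nabla m_3|^2\eta^2$. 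For the last term I would use $|e_{\mathrm{DMI}}(\m)|\lesssim |m_3|\,|\nabla m|+|\nabla m_3|$, so that the $\nabla m_3$ part yields $\eps^{-2}|\nabla m_3|^2+|\nabla\m|^4$ and the $m_3$ part, after $|m_3|\le 1$ and the ambient pointwise energy bound, is absorbed into the quartic gradient term.

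The second estimate runs along the same lines, one derivative higher. I would differentiate $\mathcal{D}(\m)$ by the product rule and verify that each resulting summand still carries an $m_3$, $\nabla m_3$ or $\nabla^2 m_3$ factor; I would then integrate by parts in precisely those terms where $1/\eps$ would otherwise be tested against $\nabla\Delta m$ or $\nabla^3 m$, transferring derivatives so that the output lands on the admissible quantities $|\nabla\Delta m|^2$, $\eps^{-2}|\nabla^2 m_3|^2$, $(m_3/\eps)^2\bigl(|\nabla\m|^4+|\nabla^2\m|^2\bigr)$, and $|\nabla^2 m|^2|\nabla\eta|^2$. The presence of the weight $(m_3/\eps)^2$ on the right is exactly what now accommodates the mixed product terms that were awkward at the lower order.

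The hard part will be the $\eps$-uniform bookkeeping rather than any single inequality: one must organise the integrations by parts so that $1/\eps$ is \emph{always} paired with an $m_3$-derivative and never orphaned, and so that the cutoff remainders consistently fall onto the $|\nabla\eta|^2$-weighted slots. The most delicate piece is the treatment of $\nabla e_{\mathrm{DMI}}(\m)$ in the second estimate, where the product rule generates many terms and one has to confirm that the $m_3$-structure survives differentiation, so that every factor of $1/\eps$ can be absorbed and the mixed contributions $(m_3/\eps)^2(\dots)$ are matched precisely to the stated right-hand side.
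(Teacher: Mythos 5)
Your proposal is correct and follows essentially the same route as the paper: expand $\mathcal{D}(\m)\cdot\Delta\m$ via the projection identity $\m\cdot\Delta\m=-|\nabla\m|^2$ into $-\nabla m_3\cdot\Delta m+(\nabla\cdot m)\,\Delta m_3+2e_{\mathrm{DMI}}(\m)|\nabla\m|^2$, integrate the middle term by parts to produce $\nabla m_3\cdot\nabla(\nabla\cdot m)\,\eta^2$ plus the $(\nabla m_3\cdot\nabla\eta)(\nabla\cdot m)\eta$ remainder, and finish with H\"older and Young, with the second estimate obtained by the analogous computation one derivative higher. The only cosmetic deviation is that the paper additionally integrates the $(m\cdot\nabla)m_3$ piece of $e_{\mathrm{DMI}}$ by parts (yielding $2m_3(\nabla\cdot m)|\nabla m|^2\eta^2+m_3(m\cdot\nabla)(|\nabla m|^2\eta^2)$) where you bound $|e_{\mathrm{DMI}}|$ pointwise, and both versions must, as you correctly note for the cubic term $\eps^{-1}|m_3||\nabla m||\nabla\m|^2$, let the implied constant depend on the ambient pointwise bound on $e_\eps(\m)$, exactly as the paper stipulates after Proposition~\ref{thm:smallenergyregularity}.
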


\begin{proof}
Integration by parts gives
\begin{align*}
\int  \mathcal{D}(\m) \, \Delta \m \, \eta^2 \,\d{x} &= - \int \nabla m_3 \, (\Delta m + \nabla \nabla \cdot m) \eta^2   
- 2 \int (\nabla m_3 \cdot \nabla \eta) (\nabla \cdot m) \,  \eta \\
&\quad + 2 \int m_3 (\nabla \cdot m) |\nabla m|^2 \eta^2 + \int m_3 (m \cdot \nabla) \big(|\nabla m|^2 \eta^2 \big).
\end{align*}
Using H\"older and Young inequalities, the first claim follows. A similar argument gives the second claim.
\end{proof}

\begin{proof}[Proof of Proposition~\ref{thm:smallenergyregularity}] By scaling it is sufficient to consider the case $R=1$. 
Let $\m$ be a critical point of $E_{\lambda,\eps}$ with $|\lambda|<\lambda_0$ and
\begin{align*}
\int_{D_2} e_\eps(\m) \, \d{x} < \delta_0^2.
\end{align*}
Choose $\rho \in (0,2)$ such that
\begin{align*}
(2-\rho)^2 \sup_{D_\rho} e_\eps(\m) = \max_{\sigma \in [0,2]} \left[(2-\sigma)^2 \sup_{D_\sigma} e_\eps(\m)\right],
\end{align*}
and let $e_0 = \sup_{D_\rho} e_\eps(\m)$, attained at $x_0 \in \overline{D_\rho}$. 
If $(2-\rho)^2 e_0 \le 4$, the claim follows. Otherwise, define the rescaled map
\begin{align*}
\tm(x) = \m\!\left(x_0 + \frac{x}{\sqrt{e_0}}\right), \quad e_{\sqrt{e_0}\eps}(\tm)(x) = \frac{1}{e_0} e_\eps(\m) \Big(x_0 + \frac{x}{\sqrt{e_0}}\Big).
\end{align*}
Then $e_{\sqrt{e_0} \eps}(\tm)(0)=1$ and $\sup_{D_1} e_{\sqrt{e_0}\eps}(\tm) \le 4$. Sobolev embedding $H^2(D_{1/4}) \hookrightarrow L^\infty(D_{1/4})$ and the small-energy condition imply
\begin{align*}
1 = e_{\sqrt{e_0}\eps}(\tm)(0) \le c \int_{D_2} e_\eps(\m) \, \d{x} \le c \delta_0^2.
\end{align*}
Hence, for $\delta_0$ sufficiently small, $(2-\rho)^2 e_0 > 4$ is impossible, proving the claim.
\end{proof}

\subsection{The far field}\label{subsec:farfield}

Let $\m_{\lambda,\eps}$ be a minimizer of 
\[
E_{\lambda,\eps}(\m) = \int_{\T^2} e_{\lambda,\eps}(\m) \, \mathrm{d}x
\]
in the topological class $\deg=-1$. Consider a sequence $\eps_k \to 0$ and set $\m_k := \m_{\lambda,\eps_k}$ such that
\[
\m_k \rightharpoonup \m \quad \text{weakly in } H^1(\T^2;\S^2).
\]
Note that the out-of-plane component vanishes in the limit,
\[
\int_{\T^2} m_{k,3}^2 \le c \eps_k^2 \to 0 \quad \text{as } k\to\infty,
\]
and the Dirichlet energy is uniformly bounded,
\[
\int_{\T^2} |\nabla \m_k|^2 \le 4\pi(1 + c \lambda^2) < 8\pi.
\]

By the same argument as in Theorem~\ref{thm:existence} (see Theorem E.1 in \cite{BrezisHarmonicmaps} and Lemma 4.3 in \cite{LionsConcentrationCompactness}), we have, weakly-$\ast$ as measures,
\[
\frac12 |\nabla \m_k|^2 \rightharpoonup \mu \ge \frac12 |\nabla \m|^2 + 4\pi \delta_0, \qquad
\omega(\m_k) \rightharpoonup 4\pi \delta_0
\]
for some measure $\mu$. Therefore, for any fixed $\rho > 0$,
\[
\limsup_{k \to \infty} \frac12 \int_{\T^2 \setminus D_\rho} |\nabla \m_k|^2 \, \mathrm{d}x \le c \lambda^2.
\]

Write $\m_k = (m_k, m_{k,3})$, $\m = (m,0)$, and define $v_k := m_{k,3}/\eps_k \in L^2(\T^2)$. Using Proposition~\ref{thm:smallenergyregularity} and covering $\T^2 \setminus D_\rho$ by disks of radius $\rho/2$, we obtain, for $j=1,2$,
\[
\rho^{2j} \int_{\T^2 \setminus D_\rho} |\nabla^{j+1} \m_k|^2 + |\nabla^j v_k|^2 \, \mathrm{d}x
\le C \int_{\T^2 \setminus D_{\rho/2}} |\nabla \m_k|^2 + v_k^2 \, \mathrm{d}x.
\]
Thus, up to a subsequence,
\[
m_k \rightharpoonup m \quad \text{in } H^1(\T^2;\S^1) \cap H^3_{\rm loc}(\T^2 \setminus \{0\};\S^1), \qquad
v_k \rightharpoonup v \quad \text{in } L^2(\T^2) \cap H^2_{\rm loc}(\T^2 \setminus \{0\}).
\]

Testing the Euler-Lagrange equation for $\m_k$ with
\[
\boldsymbol{\psi} = \begin{pmatrix} - m_k^\perp \\ 0 \end{pmatrix} \eta, \quad \eta \in C_c^\infty(\T^2 \setminus D_\rho),
\]
and passing to the limit $k\to\infty$ yields
\[
\int_{\T^2} (\nabla m \cdot m^\perp) \cdot \nabla \eta - \lambda (\nabla v \cdot m^\perp) \eta \, \mathrm{d}x = 0.
\]
Hence, $m \in H^3(\T^2 \setminus D_\rho;\S^1)$ solves
\begin{equation}\label{eq:(m,v)}
(\Delta m + \lambda \nabla v) \cdot m^\perp = 0 \quad \text{in } \T^2 \setminus D_\rho.
\end{equation}

Similarly, testing with
\[
\boldsymbol{\psi} = \eps_k \eta \, \e 3, \quad \eta \in C_c^\infty(\T^2 \setminus D_\rho),
\]
and passing to the limit $k \to \infty$ shows that $v$ is determined by $m$, leading to a coupled system. Therefore, $m$ solves \eqref{eq:m} in $\mathcal{D}'(\T^2 \setminus \{0\})$.  

To extend $m$ to a weak solution on all of $\T^2$, we use a logarithmic cut-off function to approximate test functions away from the singular point \cite{GuangLiWangZhouMinMax,HirschLammIndexHarmonicMaps,ChoiSchoenMinEmbedding}:

\begin{lemma}\label{lemma:extension}
Let $m \in H^1(\T^2;\S^1) \cap H^2_{\rm loc}(\T^2 \setminus \{0\})$ be a solution to \eqref{eq:m} in $\mathcal{D}'(\T^2 \setminus \{0\})$. Then $m$ solves \eqref{eq:m} in $\mathcal{D}'(\T^2)$.
\end{lemma}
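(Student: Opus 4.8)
The statement is a removable-singularity result: since the equation is known to hold in $\mathcal{D}'(\T^2\setminus\{0\})$, the difference of the two distributions is supported at the single point $\{0\}$, and the plan is to show that this concentrated distribution vanishes. The decisive analytic input is the vanishing $H^1$-capacity of a point in two dimensions, which I would exploit through a logarithmic cut-off. Fixing $r_0>0$ with $D_{r_0}\subset\T^2$, I would set, for $0<\epsilon<r_0$,
\begin{equation*}
\chi_\epsilon(x)=
\begin{cases}
0, & |x|\le\epsilon,\\[2pt]
\dfrac{\ln(|x|/\epsilon)}{\ln(r_0/\epsilon)}, & \epsilon\le|x|\le r_0,\\[2pt]
1, & |x|\ge r_0,
\end{cases}
\end{equation*}
so that $\chi_\epsilon$ is Lipschitz, $0\le\chi_\epsilon\le 1$, $\chi_\epsilon\to1$ pointwise on $\T^2\setminus\{0\}$, and crucially $\int_{\T^2}|\nabla\chi_\epsilon|^2\,\mathrm{d}x=2\pi/\ln(r_0/\epsilon)\to 0$ as $\epsilon\to0$.

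The structural point is to work not with the second-order form \eqref{eq:m} but with the first-order tangential weak formulation \eqref{eq:weakangletest}, in which only first derivatives of $m$ appear and every factor lies in a global $L^2(\T^2)$ space. The hypothesis $m\in H^2_{\mathrm{loc}}(\T^2\setminus\{0\})$ is precisely what licenses testing the strong equation with the merely $H^1$ tangential field $m^\perp\eta$ on $\T^2\setminus\{0\}$, so that \eqref{eq:weakangletest} is available for every $\eta\in H^1(\T^2)$ whose support avoids the origin. Given an arbitrary $\eta\in C^\infty(\T^2)$, I would insert the admissible test function $\eta\chi_\epsilon$ (Lipschitz, vanishing near $0$) into \eqref{eq:weakangletest}, obtaining $0$ identically in $\epsilon$, and then pass to the limit $\epsilon\to0$.

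Passing to the limit splits into two families of terms. In those where $\chi_\epsilon$ appears undifferentiated the integrands are fixed $L^1(\T^2)$ functions — products of the $L^2$ quantities $\nabla m$, $\nabla\!\cdot\!m$, $\nabla\!\cdot\!m^\perp$ with the bounded data $\eta,\nabla\eta$ — multiplied by $\chi_\epsilon\to1$ a.e.; dominated convergence then reproduces exactly the left-hand side of \eqref{eq:weakangletest} with test function $\eta$ on all of $\T^2$. In the remaining terms the gradient falls on the cut-off, producing factors of $\nabla\chi_\epsilon$ supported in $D_{r_0}$; each such contribution is estimated by Cauchy--Schwarz as
\begin{equation*}
\lesssim (1+\lambda^2)\,\|\eta\|_{L^\infty}\,\|\nabla m\|_{L^2(D_{r_0})}\,\|\nabla\chi_\epsilon\|_{L^2(\T^2)}\longrightarrow 0,
\end{equation*}
using only $m\in H^1(\T^2)$ together with the logarithmic capacity bound above. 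Combining the two limits shows that \eqref{eq:weakangletest} — equivalently the distributional form of \eqref{eq:m} — holds for every $\eta\in C^\infty(\T^2)$, which is the assertion.

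The step I expect to be the genuine obstacle, and the reason a plain scaling cut-off (for which $\|\nabla\chi_\epsilon\|_{L^2}=O(1)$) would fail, is the control of the Dzyaloshinskii--Moriya contribution: because $\lambda^2\nabla(\nabla\!\cdot\!m)$ is one order higher than the exchange term and $m$ is only $H^1$ across the origin, one cannot afford to keep a second derivative of $m$ near $\{0\}$. Routing the argument through the first-order form \eqref{eq:weakangletest} is what reduces every error to the product $\|\nabla m\|_{L^2}\,\|\nabla\chi_\epsilon\|_{L^2}$, and it is exactly the logarithmic smallness $\|\nabla\chi_\epsilon\|_{L^2}^2\lesssim 1/|\ln\epsilon|$ that forces this product to vanish. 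One further point requiring care is that no spurious interior boundary term is generated at $|x|=\epsilon$; this is automatic, since $\eta\chi_\epsilon$ is a legitimate test function on $\T^2\setminus\{0\}$ and \eqref{eq:weakangletest} is applied directly, without any integration by parts across the excised disk.
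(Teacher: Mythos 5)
Your proposal is correct and follows essentially the same route as the paper: the paper's proof also tests the equation against $\eta_\rho\psi$ with a logarithmic cut-off $\eta_\rho$ satisfying $\|\nabla\eta_\rho\|_{L^2}\to 0$, discards the commutator terms via Cauchy--Schwarz and the vanishing capacity of a point, and passes to the limit. Your version merely spells out the details (in particular the reduction to the first-order form \eqref{eq:weakangletest}, which is implicit in the paper's phrase ``testing \eqref{eq:m}'').
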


\begin{proof}
Let $\psi \in H^1 \cap L^\infty(\T^2;\R^2)$ and let $\eta_\rho$ be a logarithmic cut-off function with $\eta_\rho \to 1$ pointwise and $\|\nabla \eta_\rho\|_{L^2} \to 0$ as $\rho \to 0$. Testing \eqref{eq:m} with $\eta_\rho \psi$ and taking the limit $\rho \to 0$ gives the result.
\end{proof}

By Theorem \ref{thm:energygap} and Proposition \ref{thm:smallenergyregularity}, we conclude:

\begin{prop}
For any sequence $(\m_k)$ of minimizers of $E_{\lambda,\eps_k}$ in the topological class $\deg=-1$ with $\eps_k \to 0$, there exists $c \in \S^1$ such that, up to a subsequence, for every $\rho>0$,
\[
\lim_{k \to \infty} \|\m_k - (c,0)\|_{L^\infty(\T^2 \setminus D_\rho)} = 0.
\]
\end{prop}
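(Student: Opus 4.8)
The plan is to combine the far-field energy decay with the energy gap theorem to upgrade the weak $H^1$ convergence to uniform convergence outside any fixed disk. First I would fix $\rho>0$ and recall that the minimizers $\m_k$ already satisfy, after passing to a subsequence and extracting the concentration point (which we place at the origin), the localized energy bound
\[
\limsup_{k\to\infty}\tfrac12\int_{\T^2\setminus D_\rho}|\nabla\m_k|^2\,\d x\le c\lambda^2,
\]
together with $\int_{\T^2}m_{k,3}^2\le c\eps_k^2\to0$. By Proposition~\ref{thm:smallenergyregularity}, applied on a covering of $\T^2\setminus D_\rho$ by disks of radius $\rho/2$ on which the rescaled energy is below $\delta_0^2$ once $\lambda_0$ is small, I obtain $\eps$-independent $H^3_{\loc}$ bounds; passing to the limit then yields $m_k\rightharpoonup m$ with $m\in H^1(\T^2;\S^1)\cap H^3_{\loc}(\T^2\setminus\{0\};\S^1)$, and $m$ solves \eqref{eq:m} on $\T^2\setminus\{0\}$. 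Lemma~\ref{lemma:extension} removes the singularity so that $m$ is a genuine critical point of $E_\lambda$ on all of $\T^2$.

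The crux is then to verify the small-energy hypothesis of Theorem~\ref{thm:energygap}. I would use the weak lower semicontinuity of the Dirichlet energy together with the above localized bound to conclude
\[
\int_{\T^2}|\nabla m|^2\,\d x\le\liminf_{k\to\infty}\int_{\T^2\setminus D_\rho}|\nabla\m_k|^2\,\d x\le c\lambda^2,
\]
valid for every $\rho>0$ after letting $\rho\to0$. Choosing $\lambda_0$ small enough that $c\lambda_0^2\le\delta^2$, where $\delta$ is the gap threshold, forces $\|\nabla m\|_{L^2}\le\delta$, so Theorem~\ref{thm:energygap} applies and yields $m\equiv c$ for some constant $c\in\S^1$. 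This identifies the weak limit as the constant easy-plane state.

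Finally I would promote convergence from weak-$H^1$ to uniform on $\T^2\setminus D_\rho$. Since $m\equiv c$ has zero gradient, the localized estimates give $\int_{\T^2\setminus D_\rho}|\nabla m_k|^2\to0$ (the limiting measure $\mu$ restricted to the complement of the concentration point is absolutely continuous with density $\tfrac12|\nabla m|^2=0$, so the energy decay is genuine convergence to zero, not merely boundedness). Feeding this back into Proposition~\ref{thm:smallenergyregularity} on the covering of $\T^2\setminus D_\rho$ yields $\sup_{\T^2\setminus D_{2\rho}}|\nabla\m_k|^2\to0$ and hence, combined with $\|m_{k,3}\|_{L^\infty(\T^2\setminus D_\rho)}\to0$ from the same higher-order bounds and $\int m_{k,3}^2\to0$, equicontinuity of $\m_k$ on the annular region; Arzelà--Ascoli together with the $L^2$ convergence $\m_k\to(c,0)$ then upgrades to uniform convergence, giving the claim.

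The main obstacle I anticipate is the sharpening of the energy estimate from the mere bound $\limsup\le c\lambda^2$ to actual convergence of the exterior Dirichlet energy to zero: the concentration-compactness decomposition a priori only guarantees $\mu\ge\tfrac12|\nabla m|^2+4\pi\delta_0$, so one must argue that no additional energy is lost into the neck or spread diffusely over the exterior region. This is where the identification $m\equiv c$ via the energy gap is essential—once the weak limit carries no gradient, the lower-semicontinuity inequality becomes an equality forcing the exterior energy to vanish, after which the regularity theory closes the argument.
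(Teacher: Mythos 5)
Your proposal follows the paper's argument essentially step for step: the far-field energy bound $\limsup_k \tfrac12\int_{\T^2\setminus D_\rho}|\nabla\m_k|^2\le c\lambda^2$ from concentration--compactness, the $\eps$-independent higher-order estimates from Proposition~\ref{thm:smallenergyregularity} on a covering of $\T^2\setminus D_\rho$, identification of the weak limit as a critical point of $E_\lambda$ on all of $\T^2$ via Lemma~\ref{lemma:extension}, and the energy gap of Theorem~\ref{thm:energygap} to conclude $m\equiv c$. The only cosmetic difference is your final upgrade to uniform convergence: the parenthetical claim that the defect measure is absolutely continuous away from the concentration point is not what Lions' lemma gives you directly (it only gives $\mu\ge\tfrac12|\nabla m|^2+4\pi\delta_0$), but it does follow from Rellich applied to your own $H^3_{\loc}$ bounds --- and in fact the compact embedding $H^3\hookrightarrow C^0$ on $\T^2\setminus D_\rho$, together with the $L^\infty$ control of $v_k=m_{k,3}/\eps_k$, already yields the stated $L^\infty$ convergence without the detour through vanishing exterior Dirichlet energy.
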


\subsection{Asymptotic scaling of the core radius}
Let $\m_{\lambda,\eps}$ be a minimizer of 
\begin{align*}
    E_{\lambda,\eps} (\m) = \int_{\T^2} e_{\lambda,\eps}(\m) \, \mathrm{d}x
\end{align*}
in the topological class $\deg=-1$. As before, we define the core radius
\begin{align*}
    R_{\lambda,\eps} = \sup \Bigl\{ \rho>0 : 
    \sup_{x \in \T^2} \int_{D_\rho(x)} e_\eps(\m_{\lambda,\eps}) \, \mathrm{d}x < \delta_0^2 \Bigr\}
\end{align*}
where $\delta_0$ is taken from Proposition~\ref{thm:smallenergyregularity}.  
We track points $x_{\lambda,\eps}$ such that
\begin{align*}
    \frac{\delta_0^2}{2} \le \int_{D_{R_{\lambda,\eps}}(x_{\lambda,\eps})} e_\eps(\m_{\lambda,\eps}) \, \mathrm{d}x < \delta_0^2.
\end{align*}
Without loss of generality, assume $x_{\lambda,\eps} = 0$ and define the blow-up map
\begin{align*}
    \tm_{\lambda,\eps}(x) := \m_{\lambda,\eps}(R_{\lambda,\eps} x).
\end{align*}

\begin{prop}\label{prop:notzero}
There exists $\lambda_0>0$ such that, for every $0<|\lambda|<\lambda_0$, 
\begin{align*}
    \liminf_{\eps \to 0} \frac{R_{\lambda,\eps}}{\eps} > 0.
\end{align*}
\end{prop}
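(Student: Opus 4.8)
The plan is to argue by contradiction. Assume the assertion fails; then there is a sequence $\eps_k\to 0$ with $R_k/\eps_k\to 0$, where $R_k:=R_{\lambda,\eps_k}$. The pivotal structural fact is that $E_{\lambda,\eps}$ is covariant under the rescaling $\m\mapsto\m(R\,\cdot\,)$ accompanied by $\eps\mapsto\eps/R$: a direct change of variables shows that, for every domain $\Omega$,
\[
E_{\lambda,\eps_k}\!\bigl(\m_{\lambda,\eps_k};\Omega\bigr)=E_{\lambda,\hat\eps_k}\!\bigl(\tm_k; R_k^{-1}\Omega\bigr),\qquad \hat\eps_k:=\eps_k/R_k\to\infty,
\]
where $\tm_k:=\tm_{\lambda,\eps_k}=\m_{\lambda,\eps_k}(R_k\,\cdot\,)$ and the same identity holds for the auxiliary density $e_\eps$. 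By the choice of the concentration point (centred at the origin) and the definition of $R_k$ we have $\int_{D_1}e_{\hat\eps_k}(\tm_k)\in[\delta_0^2/2,\delta_0^2)$ and $\sup_x\int_{D_1(x)}e_{\hat\eps_k}(\tm_k)\le\delta_0^2$.

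Next I would extract the bubble. Since $\tm_k$ is a critical point of $E_{\lambda,\hat\eps_k}$ satisfying the unit-ball small-energy condition everywhere, the $\eps$-independent estimates of Proposition~\ref{thm:smallenergyregularity} give uniform $H^3_{\loc}(\R^2)$ bounds, so that, along a subsequence, $\tm_k\rightharpoonup\h$ in $H^2_{\loc}(\R^2;\S^2)$ and strongly in $H^1_{\loc}$ by Rellich. As the chirality and anisotropy terms enter the Euler--Lagrange system only through the factors $\lambda/\hat\eps_k$ and $1/\hat\eps_k^2$, which tend to $0$, the limit $\h$ is a finite-energy harmonic map $\R^2\to\S^2$ (its Dirichlet energy is controlled by $\liminf_k\tfrac12\int_{\T^2}|\nabla\m_{\lambda,\eps_k}|^2$). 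Moreover the anisotropic part of $e_{\hat\eps_k}(\tm_k)$ on $D_1$ is at most $\pi/\hat\eps_k^2\to0$, so strong convergence gives $\tfrac12\int_{D_1}|\nabla\h|^2\ge\delta_0^2/2>0$; hence $\h$ is nonconstant, and the classification of finite-energy harmonic maps into $\S^2$ as rational maps yields $\tfrac12\int_{\R^2}|\nabla\h|^2=4\pi|\deg\h|\ge4\pi$.

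The contradiction then follows from an energy lower bound. The pointwise inequality~\eqref{eq:lambda-ineq} shows that, for $|\lambda|<1$, the density $e_{\lambda,\hat\eps_k}$ is nonnegative, so discarding the exterior of any fixed ball $D_L$ gives
\[
E_{\lambda,\eps_k}(\m_{\lambda,\eps_k})=E_{\lambda,\hat\eps_k}(\tm_k)\ \ge\ E_{\lambda,\hat\eps_k}(\tm_k;D_L).
\]
On $D_L$ the DMI and anisotropy contributions vanish as $k\to\infty$ (they carry factors $\lambda/\hat\eps_k$ and $1/\hat\eps_k^2$ against bounded integrals), while $\tfrac12\int_{D_L}|\nabla\tm_k|^2\to\tfrac12\int_{D_L}|\nabla\h|^2$ by strong convergence. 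Passing to the limit $k\to\infty$ and then $L\to\infty$ produces $\liminf_k E_{\lambda,\eps_k}(\m_{\lambda,\eps_k})\ge\tfrac12\int_{\R^2}|\nabla\h|^2\ge4\pi$. This is incompatible with Proposition~\ref{prop:upper_bound}, which for fixed small $\lambda$ yields an $\eps$-independent gap $E_{\lambda,\eps_k}(\m_{\lambda,\eps_k})\le 4\pi-\beta_\lambda$ with $\beta_\lambda>0$. The contradiction shows $R_k/\eps_k\not\to0$, which is the claim.

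The step I expect to be the main obstacle is guaranteeing that the subcritical energy gain is not transferred to the core under blow-up: a priori the negative DMI term might depress the rescaled energy below $4\pi$. The resolution is the decoupling of scales encoded in $\hat\eps_k=\eps_k/R_k\to\infty$ -- at the core scale the effective chirality coefficient $\lambda/\hat\eps_k$ vanishes, so the DMI cannot lower the bubble energy, and pointwise nonnegativity of the density (which is exactly where $|\lambda|<1$ is used) disposes of the neck and far field with no recourse to a delicate neck-energy estimate. A second point requiring care is that the relevant upper bound must be uniform in $\eps$: the bare strict inequality $E<4\pi$ from Theorem~\ref{thm:existence} would be consistent with $E\to4\pi$, and only the $\eps$-independent gap $\beta_\lambda$ of Proposition~\ref{prop:upper_bound} closes the argument.
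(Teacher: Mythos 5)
Your argument is correct and is essentially the paper's own proof: both proceed by contradiction, blow up at the core scale so that the effective DMI and anisotropy coefficients $\lambda R_k/\eps_k$ and $(R_k/\eps_k)^2$ vanish, extract a finite-energy harmonic limit via the $\eps$-independent small-energy regularity, and play the energy quantization $4\pi|\deg|$ against the $\eps$-uniform subcritical upper bound of Proposition~\ref{prop:upper_bound} and the non-degeneracy $\int_{D_1}|\nabla\h|^2/2\ge\delta_0^2/2$. The only difference is the order of the final contradiction (the paper deduces $\h$ is constant from energy $<4\pi$ and then contradicts non-degeneracy, whereas you deduce non-constancy first), which is logically equivalent.
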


\begin{proof}
Fix $\lambda_0>0$ such that Proposition~\ref{thm:smallenergyregularity} applies and
\begin{align*}
    E_{\lambda,\eps} (\m_{\lambda,\eps}) \le 4 \pi \left(1-\frac{\lambda^2}{16|\ln \lambda|} \right),
\end{align*}
using \eqref{eq:energyboundlambdalimit}. Suppose, by contradiction, that the claim is false. Then there exists a sequence $\eps_k \to 0$ such that
for $R_k := R_{\lambda,\eps_k}$
\begin{align*}
    \frac{R_k}{\eps_k} \to 0.
\end{align*}
Let $\m_k := \m_{\lambda,\eps_k}$ and $\tm_k$ the corresponding blow-up maps. Then $\tm_k$ is bounded in $H^2_{\rm loc}(\R^2;\S^2)$, and $R_k \to 0$. Up to a subsequence, there exists a limit map $\h \in H^2(\R^2;\S^2)$ with $\tm_k \rightharpoonup \h$ weakly in $H^2_{\rm loc}(\R^2;\S^2)$ and strongly in $H^1_{\rm loc}(\R^2;\S^2)$.
The rescaled Euler-Lagrange equation reads
\begin{align*}
\Delta \tm_k + |\nabla \tm_k|^2 \tm_k
+ \lambda \frac{R_k}{\eps_k} \Bigl( \begin{pmatrix} - \nabla \tms_{k,3} \\ \nabla \cdot \tms_k \end{pmatrix} 
- \big(\begin{pmatrix} - \nabla \tms_{k,3} \\ \nabla \cdot \tms_k \end{pmatrix} \cdot \tm_k\big) \tm_k \Bigr) \\
+ \left( \frac{R_k}{\eps_k} \right)^2 (\tms_{k,3} \mathbf{e}_3 - \tms_{k,3}^2 \tm_k) = 0.
\end{align*}
By the uniform $H^2$ bounds, the limit $\h$ is a weakly harmonic map with energy 
\begin{align*}
\int_{\R^2} \frac{|\nabla \h|^2}{2} \, \mathrm{d}x 
= \lim_{L \to \infty} \lim_{k \to \infty} \int_{D_L} e_{\lambda,\frac{\eps_k}{R_k}}(\tm_k) \, \mathrm{d}x
\le 4 \pi \left(1- \frac{\lambda^2}{16|\ln \lambda|}\right).
\end{align*}

Finite-energy harmonic maps $\R^2 \to \S^2$ are minimizers in their homotopy class \cite{EelsLemarieharmonicmap}, hence $\h$ must be constant. This contradicts
\begin{align*}
\int_{D_1(0)} \frac{|\nabla \h|^2}{2} \, \mathrm{d}x
= \lim_{k \to \infty} \int_{D_{R_k}(0)} e_\eps(\m_k) \, \mathrm{d}x
\ge \frac{\delta_0^2}{2}
\end{align*}
proving the claim.
\end{proof}

\begin{prop}\label{prop:notinfty}
There exists $\lambda_0>0$ such that, for every $0<|\lambda|<\lambda_0$, 
\begin{align*}
    \limsup_{\eps \to 0} \frac{R_{\lambda,\eps}}{\eps} < \infty.
\end{align*}
\end{prop}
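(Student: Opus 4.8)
The plan is to argue by contradiction, mirroring the blow-up of Proposition~\ref{prop:notzero} but in the reciprocal scaling regime, where the anisotropy relaxes the core to an in-plane profile. Suppose the assertion fails, so that along some sequence $\eps_k\to 0$ one has $R_k:=R_{\lambda,\eps_k}$ with $R_k/\eps_k\to\infty$, i.e.\ $\tilde{\eps}_k:=\eps_k/R_k\to 0$. After centering the concentration point at $0$, set $\m_k:=\m_{\lambda,\eps_k}$ and $\tm_k(x):=\m_k(R_k x)$ on the rescaled torus $\T^2_k:=\R^2/(R_k^{-1}\Z^2)$. By scale invariance of the concentration functional the core radius of $\tm_k$ is exactly $1$, so $\int_{D_1(0)} e_{\tilde{\eps}_k}(\tm_k)\in[\delta_0^2/2,\delta_0^2)$ and every unit ball carries energy strictly below $\delta_0^2$.

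First I would extract a limit. Applying Proposition~\ref{thm:smallenergyregularity} with effective parameter $\tilde{\eps}_k$ on a covering by balls of radius $\tfrac12$ gives $\eps$-independent $H^2_{\mathrm{loc}}(\R^2)$ bounds for $\tm_k$; hence, up to a subsequence, $\tm_k\rightharpoonup\h$ in $H^2_{\mathrm{loc}}$, and strongly in $H^1_{\mathrm{loc}}$ and $C^0_{\mathrm{loc}}$. The mechanism specific to this regime is that the out-of-plane component is quenched: the third component of $\tm_k$ obeys
\[
\int_{D_L}(\tm_k)_3^2\,\d x = R_k^{-2}\!\int_{D_{R_k L}} m_{k,3}^2\,\d y\le R_k^{-2}\,c\,\eps_k^2 = c\,\tilde{\eps}_k^2\to 0,
\]
so the limit is in-plane, $\h=(m,0)$ with $m:\R^2\to\S^1$. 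Consequently $\omega(\h)=0$, and the strong $H^1_{\mathrm{loc}}$ convergence of $\nabla\tm_k$ together with the $C^0_{\mathrm{loc}}$ convergence of $\tm_k$ lets the Jacobian pass to the limit, so that $\int_{D_L}\omega(\tm_k)\to\int_{D_L}\omega(\h)=0$ for every fixed $L$.

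The contradiction then follows from the topological charge and the global Dirichlet budget alone, without invoking the energy gap or a neck estimate. Precomposition with the scaling is an orientation-preserving diffeomorphism, so $\deg\tm_k=\deg\m_k=-1$ and $\int_{\T^2_k}\omega(\tm_k)=-4\pi$; combined with the previous step this gives $\int_{\T^2_k\setminus D_L}\omega(\tm_k)\to -4\pi$. The pointwise bound $|\omega(\tm_k)|\le\tfrac12|\nabla\tm_k|^2$ then forces $\liminf_{k\to\infty}\int_{\T^2_k\setminus D_L}|\nabla\tm_k|^2\ge 8\pi$. On the other hand the rescaling preserves the Dirichlet energy, so $\int_{\T^2_k}|\nabla\tm_k|^2=\int_{\T^2}|\nabla\m_k|^2\le 4\pi(1+c\lambda^2)$, which is $<8\pi$ once $\lambda_0$ is taken small. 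The chain $8\pi\le\liminf_k\int_{\T^2_k\setminus D_L}|\nabla\tm_k|^2\le\int_{\T^2}|\nabla\m_k|^2<8\pi$ is absurd, establishing $\limsup_{\eps\to 0}R_{\lambda,\eps}/\eps<\infty$.

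I expect the crux to be the joint use of the quenching of the third component and the regularity needed to pass $\omega$ to the limit: the entire argument turns on the fact that, when $\eps_k/R_k\to 0$, blowing up at the core scale produces an in-plane limit carrying no topological charge, which expels the unit degree into $\T^2_k\setminus D_L$, where the uniform bound $\int_{\T^2}|\nabla\m_k|^2<8\pi$ leaves no room for it. Rigor here rests on the $\eps$-independence of the higher-order estimates in Proposition~\ref{thm:smallenergyregularity}, ruling out Jacobian concentration at finite scales in the blow-up, and on the $L^2$-smallness $\int_{\T^2}m_{k,3}^2\lesssim\eps_k^2$ from the far-field analysis; the remaining steps are routine.
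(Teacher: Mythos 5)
Your route is genuinely different from the paper's. The paper blows up at the core scale, observes that $\eps_k/R_k\to 0$ quenches the third component, and concludes that the limit is a finite-energy map into $\S^1\times\{0\}$, hence constant, contradicting the core normalization $\int_{D_1}e(\cdot)\ge\delta_0^2/2$. You instead track the topological charge: the in-plane limit has vanishing Jacobian, so the degree must escape to $\T^2_k\setminus D_L$, where you try to contradict the global Dirichlet budget. The preparatory steps (the $\eps$-independent $H^2_{\mathrm{loc}}$ bounds from Proposition~\ref{thm:smallenergyregularity}, the quenching $\int_{D_L}(\tm_k)_3^2\le c\,\tilde\eps_k^2$, and the passage of $\omega$ to the limit under strong $H^1_{\mathrm{loc}}\cap C^0_{\mathrm{loc}}$ convergence) are all sound.

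However, the final link of your chain is broken by a factor of $2$. Any map of degree $-1$ satisfies $\int|\nabla\m|^2\ge 8\pi$, so the bound $\int_{\T^2}|\nabla\m_k|^2\le 4\pi(1+c\lambda^2)<8\pi$ cannot hold; what actually follows from $E_{\lambda,\eps}(\m_k)<4\pi$ and \eqref{prop:toplowerbound} is $\tfrac12\int_{\T^2}|\nabla\m_k|^2\le 4\pi(1+c\lambda^2)$, i.e.\ $\int_{\T^2}|\nabla\m_k|^2\le 8\pi(1+c\lambda^2)$. (Section~\ref{subsec:farfield} of the paper contains the same slip, which you have inherited.) With the correct budget, the conclusion $8\pi\le\liminf_k\int_{\T^2_k\setminus D_L}|\nabla\tm_k|^2\le 8\pi(1+c\lambda^2)$ is no contradiction at all. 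The argument is repairable, but only by using the core normalization you set up and then never invoke: since $\tfrac12\int_{\T^2}(m_{k,3}/\eps_k)^2\le c\lambda^2$ (again from $E<4\pi$ and the topological lower bound), the anisotropy part of $\int_{D_1}e_{\tilde\eps_k}(\tm_k)\ge\delta_0^2/2$ is $O(\lambda^2)$, so $\int_{D_1}|\nabla\h|^2\ge\delta_0^2-c\lambda^2$ by strong $H^1_{\mathrm{loc}}$ convergence. Adding this core contribution to the $8\pi$ expelled to the far field gives $\liminf_k\int_{\T^2_k}|\nabla\tm_k|^2\ge 8\pi+\delta_0^2-c\lambda^2$, which does contradict $8\pi(1+c\lambda^2)$ once $\lambda_0$ is small relative to the universal constant $\delta_0$. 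You should also say a word about the dichotomy $R_k\to 0$ versus $R_k\to R_\infty>0$ (as the paper does): in the latter case $D_L$ eventually exhausts $\T^2_k$ and the contradiction is immediate from $\int_{\T^2_k}\omega(\tm_k)=-4\pi\not\to 0$.
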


\begin{proof}
Assume, by contradiction, that the claim fails. Then there exist sequences $(\lambda_j)_{j \in \N}$ and $(\eps_k)_{k \in \N}$ with $\lambda_j \to 0$, $\eps_k \to 0$ such that
\begin{align*}
\lim_{k \to \infty} \frac{R_{\lambda_j, \eps_k}}{\eps_k} = \infty \quad \text{for all } j \in \N.
\end{align*}

Let $R_{j,k} := R_{\lambda_j, \eps_k}$ and $\m_{j,k}$ the corresponding minimizers. Up to subsequences
\begin{enumerate}
    \item $\lim_{k \to \infty} R_{j,k} = R_j > 0$ for every $j$, or
    \item $\lim_{k \to \infty} R_{j,k} = 0$ for every $j$.
\end{enumerate}

In both cases, after appropriate rescaling and passing to the limit, one obtains a finite energy harmonic map $\m_0: \R^2 \to \S^2$ with values in $\S^1 \times \{0\}$ so that $\m_0$ is constant. This contradicts
\begin{align*}
\frac{1}{2}\int_{D_1} |\nabla \m_0|^2 \, \mathrm{d}x \ge \frac{\delta_0^2}{2}
\end{align*}
and proves the claim.
\end{proof}

\subsection{Energy decay in the neck domain}

\begin{lemma}\label{lemma:neckharmonic}
Let $(\bs{\Psi}_k)_{k \in \N}$ be a sequence of harmonic maps with $\deg \bs{\Psi}_k = -1$ given by
\[
\bs{\Psi}_k(z) = \Phi\!\left( \frac{z + z_k - a_k}{z + z_k + a_k} \right),
\]
where $z_k \in \C$ and $a_k>0$.  
Assume that for some sequence $R_k \to 0$,
\[
\lim_{k \to \infty} \frac{|z_k|}{R_k} = s \in [0,\infty),
\qquad
\lim_{k \to \infty} \frac{a_k}{R_k} = a \in (0,\infty).
\]
Then, for every fixed $\rho>0$,
\[
\lim_{L \to \infty} \lim_{k \to \infty}
\int_{D_\rho \setminus D_{R_k L}} \frac{1}{2} |\nabla \bs{\Psi}_k|^2 \, \d z = 0.
\]
\end{lemma}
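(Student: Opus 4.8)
The plan is to exploit the explicit Möbius structure of $\bs{\Psi}_k$ together with the conformal invariance of the Dirichlet energy, reducing everything to an elementary integral. Since $f_k$ is holomorphic, the computation leading to Lemma~\ref{lemma:densityholomorph} identifies the Dirichlet energy density as
\[
\tfrac12 |\nabla \bs{\Psi}_k|^2 = \frac{4\,|f_k'(z)|^2}{(|f_k(z)|^2+1)^2},
\]
which is exactly the spherical area form pulled back by $\bs{\Psi}_k$. Writing $f_k(z) = 1 - \frac{2a_k}{z+z_k+a_k}$, one reads off $f_k'(z) = \frac{2a_k}{(z+z_k+a_k)^2}$, so the zero $z=a_k-z_k$ and the pole $z=-a_k-z_k$ of $f_k$ both sit at scale $R_k$ by the hypotheses $|z_k|/R_k\to s$ and $a_k/R_k\to a$, while $\bs{\Psi}_k(\infty)=\Phi(1)=(1,0,0)$. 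Heuristically, on the neck $D_\rho\setminus D_{R_kL}$ the map lands near the single point $(1,0,0)\in\St$, so the swept spherical area is negligible.

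To make this quantitative, first I would use the trivial pointwise bound $(|f_k|^2+1)^2\ge1$ to obtain
\[
\tfrac12 |\nabla \bs{\Psi}_k|^2 \le 4\,|f_k'(z)|^2 = \frac{16\,a_k^2}{|z+z_k+a_k|^4}.
\]
Next I would locate the neck relative to the singularity $z=-a_k-z_k$: setting $C_0:=s+a+2$, for all large $k$ we have $|z_k+a_k|\le C_0 R_k$, so on $\{|z|\ge R_kL\}$ with $L>C_0$,
\[
|z+z_k+a_k| \ge |z| - |z_k+a_k| \ge R_k(L-C_0).
\]
Translating by $w=z+z_k+a_k$ and enlarging the region to the full exterior $\{|w|\ge R_k(L-C_0)\}$ (legitimate since the integrand is nonnegative) gives, in polar coordinates,
\[
\int_{D_\rho\setminus D_{R_kL}} \tfrac12 |\nabla \bs{\Psi}_k|^2 \, \d z
\;\le\; 16\,a_k^2 \int_{|w|\ge R_k(L-C_0)} \frac{\d w}{|w|^4}
\;=\; \frac{16\pi\, a_k^2}{R_k^2\,(L-C_0)^2}.
\]
Finally, invoking $a_k/R_k\to a$, the inner limit yields $\lim_{k\to\infty}\frac{16\pi a_k^2}{R_k^2(L-C_0)^2}=\frac{16\pi a^2}{(L-C_0)^2}$, and letting $L\to\infty$ sends this to $0$, which is the claim. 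The neck is nondegenerate in the inner limit because $R_kL\to0<\rho$, and the outer radius $\rho$ is irrelevant since $|w|^{-4}$ is integrable at infinity.

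I do not expect a genuine obstacle here: the estimate follows directly from the rational form of $f_k$ and the decay $|f_k'|^2 \lesssim a_k^2/|z+z_k+a_k|^4$. The only delicate point is the bookkeeping of the translation by $z_k+a_k$, that is, verifying that the zero and pole of $f_k$ remain trapped at scale $R_k$ and are therefore excluded from the neck once $L$ is large. This is precisely where the scaling hypotheses $|z_k|/R_k\to s\in[0,\infty)$ and $a_k/R_k\to a\in(0,\infty)$ enter, guaranteeing that the lower bound $|z+z_k+a_k|\ge R_k(L-C_0)$ holds uniformly in $k$.
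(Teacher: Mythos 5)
Your proof is correct, but it takes a genuinely different route from the paper. The paper computes the Dirichlet energy on disks \emph{exactly}: writing the density as $\Delta\ln(|z|^2+a_k^2)$, applying the divergence theorem, and evaluating the resulting boundary integral via the identity $\int_{-\pi}^{\pi}\frac{\d\theta}{A+B\cos\theta}=\frac{2\pi}{\sqrt{A^2-B^2}}$; it then shows that the energy in $D_\rho$ and the energy in $D_{R_kL}$ both converge to $4\pi$, so the neck contribution vanishes as the difference of two limits. You instead discard the denominator via $(|f_k|^2+1)^2\ge 1$, bound the density on the neck by $16a_k^2/|z+z_k+a_k|^4$, and integrate the tail after translating to the singularity at $-z_k-a_k$; the scaling hypotheses enter exactly where you say they do, namely to trap $|z_k+a_k|\lesssim R_k$ so that the neck lies in $\{|w|\ge R_k(L-C_0)\}$. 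Your argument is more elementary (no exact evaluation, no trigonometric integral) and yields the explicit rate $O(a^2/L^2)$ for the neck energy, interpreted as a bound on the $\limsup$ in $k$. What the paper's exact computation buys in exchange is the closed-form expression $\frac12\int_{D_{R_kL}(z_k)}|\nabla\bs{\Psi}_k|^2 = 4\pi\frac{R_k^2L^2}{R_k^2L^2+a_k^2}$, which is reused verbatim in the proof of Lemma~\ref{lemma:energydecay} to rule out $|z_k|/R_k\to\infty$; your upper bound alone would not supply that identity. As a proof of the lemma as stated, however, your argument is complete.
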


\begin{proof}
Define $\bs{\Psi}_k^0 = \Phi\!\left(\frac{z - a_k}{z + a_k}\right)$.  
A direct computation yields
\[
\frac{1}{2} \int_{D_r(z_k)} |\nabla \bs{\Psi}_k^0|^2 \, \d z
= \int_{D_r(z_k)} \frac{4 a_k^2}{(|z|^2 + a_k^2)^2} \, \d z.
\]
Setting $a=a_k$ and $z_0=z_k$, consider the integral
\[
I(r,z_0) = \int_{D_r(z_0)} \frac{4a^2}{(|z|^2 + a^2)^2} \, \d z.
\]
Since $\Delta \ln(|z|^2 + a^2) = \frac{4a^2}{(|z|^2 + a^2)^2}$, the divergence theorem gives
\[
I(r,z_0) 
= \int_{\partial D_r(z_0)} \frac{2z}{|z|^2 + a^2}\cdot \frac{z - z_0}{r}  \, \d \sigma
= r\int_{-\pi}^{\pi} \frac{2|z_0|\cos \theta + 2r}{r^2 + |z_0|^2 + 2r|z_0|\cos\theta + a^2} \, \d \theta.
\]
Using the identity
\[
\int_{-\pi}^{\pi} \frac{\d \theta}{A + B \cos\theta}
= \frac{2\pi}{\sqrt{A^2 - B^2}}, \quad A > |B|,
\]
with $A = r^2 + |z_0|^2 + a^2$ and $B = 2r|z_0|$, one obtains
\[
I(r,z_0)
= 2\pi\!\left(1 + \frac{r^2 - |z_0|^2 - a^2}
{\sqrt{(a^2 + (r - |z_0|)^2)(a^2 + (r + |z_0|)^2)}}\right).
\]
Hence,
\[
\lim_{k \to \infty}\frac{1}{2} \int_{D_\rho } |\nabla \bs{\Psi}_k|^2 \, \d z = 4 \pi,
\]
and
\[
\lim_{k \to \infty}\frac{1}{2} \int_{ D_{R_k L}} |\nabla \bs{\Psi}_k|^2 \, \d z
= 2\pi \!\left(1 + \frac{L^2 - s^2 - a^2}
{\sqrt{(a^2 + (L - s)^2)(a^2 + (L + s)^2)}} \right).
\]
The last expression converges to $4\pi$ as $L \to \infty$, which establishes the claim.
\end{proof}

\begin{lemma} \label{lemma:energydecay}
There exists $\lambda_0>0$ such that, for all $0<|\lambda|<\lambda_0$, there is a sequence $\eps_k \to 0$ satisfying
\[
\lim_{\rho \to 0} \lim_{L \to \infty} \lim_{k \to \infty} 
\int_{D_{\rho} \setminus D_{R_kL}} |\nabla \m_k|^2 \d{x} \le c \lambda^2,
\]
for some constant $c>0$, where $R_k \equiv R_{\eps_k}$.
\end{lemma}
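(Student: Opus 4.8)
The plan is to combine the quantitative Möbius rigidity \eqref{eq:rigidity} with the explicit neck computation of Lemma~\ref{lemma:neckharmonic}: I would estimate the neck energy of $\m_k$ by that of a nearby Möbius map, which vanishes in the triple limit, plus a rigidity error that is genuinely of size $O(\lambda^2)$. The first ingredient is that the Dirichlet energy excess is already quadratic in $\lambda$. Indeed, from the lower bound \eqref{prop:toplowerbound} together with $E_{\lambda,\eps_k}(\m_k)=I\le 4\pi$ one obtains $\tfrac{1-\lambda^2}{2}\int_{\T^2}|\nabla\m_k|^2\le 4\pi$, hence
\[
\tfrac12\int_{\T^2}|\nabla\m_k|^2-4\pi\le c\lambda^2
\]
uniformly in $k$. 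This uniform excess is what will ultimately be converted, via rigidity, into the $c\lambda^2$ on the right-hand side of the claim.

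Next I would pass to the blow-up $\tm_k(x)=\m_k(R_kx)$ and, by a diagonal subsequence, select $\eps_k\to0$ so that $R_k/\eps_k$, $a_k/R_k$ and $|z_k|/R_k$ all converge. Propositions~\ref{prop:notzero} and~\ref{prop:notinfty} then guarantee $R_k\sim\eps_k$ and fix a finite, nonzero core scale, so the rescaled maps carry a nontrivial degree-$(-1)$ bubble. Since the $\dot H^1$-seminorm is scale invariant and the far-field estimate gives $\limsup_k\tfrac12\int_{\T^2\setminus D_\rho}|\nabla\m_k|^2\le c\lambda^2$ with $\m_k\to(c,0)$ uniformly away from the core, I would cut $\m_k$ off to the constant $(c,0)$ across a fixed annulus to produce a degree-$(-1)$ map on $\R^2$ whose energy is $\le 4\pi+c\lambda^2$, the extension costing only an $O(\lambda^2)$ error. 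Applying \eqref{eq:rigidity} furnishes a Möbius map $\bs{\Psi}_k=\Phi\!\left(\tfrac{z+z_k-a_k}{z+z_k+a_k}\right)$ with $\int_{\R^2}|\nabla(\m_k-\bs{\Psi}_k)|^2\le c\lambda^2$; matching $\bs{\Psi}_k$ to the blow-up limit $\h$ pins its parameters into the regime $a_k/R_k\to a\in(0,\infty)$, $|z_k|/R_k\to s\in[0,\infty)$ demanded by Lemma~\ref{lemma:neckharmonic}.

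Finally I would split, on the neck $D_\rho\setminus D_{R_kL}$ (which lies inside the core-containing disk once $\rho$ is small, where the extension coincides with $\m_k$),
\[
\int_{D_\rho\setminus D_{R_kL}}|\nabla\m_k|^2 \le 2\int_{D_\rho\setminus D_{R_kL}}|\nabla\bs{\Psi}_k|^2 + 2\int_{\R^2}|\nabla(\m_k-\bs{\Psi}_k)|^2 .
\]
The second term is bounded by $c\lambda^2$ uniformly in $\rho,L,k$, while Lemma~\ref{lemma:neckharmonic} yields $\lim_{\rho\to0}\lim_{L\to\infty}\lim_{k\to\infty}\int_{D_\rho\setminus D_{R_kL}}|\nabla\bs{\Psi}_k|^2=0$. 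Taking the triple limit therefore gives exactly the asserted bound $\le c\lambda^2$.

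I expect the main obstacle to be the middle step: transferring the $\R^2$-rigidity \eqref{eq:rigidity} into the compact, anisotropic setting with a controlled $O(\lambda^2)$ error, and—crucially—identifying the optimal Möbius map produced by rigidity with the explicit bimeron profile of Lemma~\ref{lemma:neckharmonic} carrying the correct scale ratios. The cut-off to $\R^2$ must be arranged so as not to alter the degree nor inflate the energy beyond $O(\lambda^2)$, and the parameters $(z_k,a_k)$ returned by rigidity must be shown to track the blow-up scale $R_k$; this is precisely where the convergence $\tm_k\rightharpoonup\h$ and the nondegeneracy of the core scale $R_k\sim\eps_k$ (Propositions~\ref{prop:notzero} and~\ref{prop:notinfty}) are indispensable.
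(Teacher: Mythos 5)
Your architecture coincides with the paper's: cut $\m_k$ off to the constant $(c,0)$ across an annulus to obtain a degree~$-1$ map on $\R^2$, apply the quantitative rigidity \eqref{eq:rigidity} to produce a Möbius map $\bs{\Psi}_k$ with $\int_{\R^2}|\nabla(\m_k-\bs{\Psi}_k)|^2\le c\lambda^2$, and then estimate the neck energy of $\m_k$ by that of $\bs{\Psi}_k$ (which vanishes in the triple limit by Lemma~\ref{lemma:neckharmonic}) plus the rigidity error. The final splitting and the use of the energy excess $\tfrac12\int|\nabla\m_k|^2-4\pi\le c\lambda^2$ are exactly as in the paper.

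The genuine gap is the step you yourself flag as ``the main obstacle'': asserting that ``matching $\bs{\Psi}_k$ to the blow-up limit $\h$ pins its parameters into the regime $a_k/R_k\to a\in(0,\infty)$, $|z_k|/R_k\to s\in[0,\infty)$'' is not an argument. The rigidity error is $O(\lambda^2)$ with $\lambda$ \emph{fixed}; it does not tend to zero along $k$, so the rescaled $\bs{\Psi}_k(R_k\,\cdot)$ need not converge to $\h$, and weak $H^1$-closeness up to a fixed $O(\lambda^2)$ cannot by itself exclude degeneration of the scale ($a_k/R_k\to 0$ or $\infty$) nor escape of the center ($|z_k|/R_k\to\infty$). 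The paper closes this by two separate mechanisms. First, the scale $a_k$ is pinned not by rigidity of the Dirichlet part alone but by the quantitative characterization of the \emph{optimal} Möbius map for near-minimizers of the full chiral energy (Proposition~5.4 of \cite{DengRaduLamyConformalBimeron}), which yields \eqref{eq:rigiditycharacterization}, i.e.\ $a_k\approx\lambda\eps_k/|\ln\lambda|$ together with smallness of the rotation angles; combined with $R_k\sim\eps_k$ from Propositions~\ref{prop:notzero} and~\ref{prop:notinfty} this gives $a_k/R_k\to a>0$. For this input one needs the cut-off to cost only $O(\lambda^2/|\ln\lambda|^2)$, which is why the paper works with the sharper far-field bound $\int_{\T^2\setminus D_\rho}|\nabla\m_k|^2\le\lambda^2/|\ln\lambda|^2$ rather than your $O(\lambda^2)$; a cut-off error of size $\lambda^2$ would swamp the $\lambda^2/|\ln\lambda|$ energy gain that Proposition~5.4 exploits. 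Second, $|z_k|/R_k\to\infty$ is excluded by a contradiction with rigidity itself: in that case the entire $4\pi$ of $\bs{\Psi}_k$ concentrates in $D_{R_kL}(z_k)$, a disk disjoint from the core $D_{R_kL}(0)$ of $\m_k$, forcing $\|\nabla\bs{\Psi}_k-\nabla\m_k\|_{L^2(D_{R_kL}(z_k))}^2\gtrsim\delta_0^2$, which contradicts \eqref{ineq:rigidity} once $\lambda_0$ is small relative to $\delta_0$. Without these two ingredients the hypotheses of Lemma~\ref{lemma:neckharmonic} are not verified and your triple limit cannot be taken.
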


\begin{proof}
Fix $\rho>0$.  
By the results of Section~\ref{subsec:farfield}, the sequence $(\m_k)$ satisfies
\[
\m_k \rightharpoonup (c,0) \quad \text{weakly in } H^1(\T^2;\S^2)
\text{ and strongly in } H^3(\T^2 \setminus D_{\rho};\S^2),
\]
while, in the sense of measures,
\[
\tfrac{1}{2} |\nabla \m_k|^2 \rightharpoonup \mu \ge 4\pi\delta_0,
\quad \text{and} \quad 
\omega(\m_k) \rightharpoonup 4\pi\delta_0.
\]
Assume that
\[
\int_{\T^2 \setminus D_{\rho}} |\nabla \m_k|^2 \d{x} 
\le \frac{\lambda^2}{|\ln \lambda|^2},
\qquad
\|\m_k - (c,0)\|_{L^{\infty}(\T^2 \setminus D_{\rho})} < \tfrac{1}{2}
\]
for all sufficiently large $k$.  
Introduce the auxiliary map
\[
\bm_k(x) = \frac{\c + \eta(x)(\m_k(x) - \c)}{|\c + \eta(x)(\m_k(x) - \c)|},
\]
where $\c=(c,0)$ and $\eta \in C^{\infty}_c(D_{2 \rho})$ satisfies $\eta \equiv 1$ on $D_{\rho}$, $0 \le \eta \le 1$, and $\|\nabla \eta\|_{L^{\infty}} \lesssim \rho^{-1}$.  
The map $\bm_k \in H^1(\R^2;\S^2)$ has $\deg \bm_k=-1$.  
By quantitative rigidity \eqref{eq:rigidity}, there exists a harmonic map $\bs{\Psi}_k \in H^1(\R^2;\S^2)$ with $\deg \bs{\Psi}_k=-1$ such that
\begin{equation}\label{ineq:rigidity}
     \int_{\R^2} |\nabla \bm_k - \nabla \bs{\Psi}_k|^2 \d{x} 
     \le c \!\left(\frac12 \int_{\R^2} |\nabla \bm_k|^2 \d{x} - 4 \pi \right) 
     \le c \lambda^2.
\end{equation}
Consequently,
\[
\int_{D_{\rho} \setminus D_{R_k L}} |\nabla \m_k|^2 \d{x} 
\le c \lambda^2 + \int_{D_{\rho} \setminus D_{R_k L}} |\nabla \bs{\Psi}_k|^2 \d{x}.
\]

The map $\bm_k$ satisfies
\[
\int_{D_{2 \rho }\setminus D_{\rho}} |\nabla \bm_k|^2 \d{x} 
\le c \int_{D_{2 \rho }\setminus D_{\rho}} \!\left(\frac{|\m_k - \c|^2}{\rho^2} + |\nabla \m_k|^2\right) \d{x}
\le c \frac{\lambda^2}{|\ln \lambda|^2},
\]
and
\[
\int_{D_{2 \rho }\setminus D_{\rho}} \frac{\bms_{k,3}^2}{\eps_k^2} \d{x}
\le \int_{D_{2 \rho }\setminus D_{\rho}} \frac{m_{k,3}^2}{\eps_k^2} \d{x}
\le \frac{\lambda^2}{|\ln \lambda|^2}.
\]
Therefore,
\[
\int_{\R^2} e_{\lambda,\eps_k} (\bm_k) \d{x}
\le \int_{\T^2} e_{\lambda,\eps_k}(\m_k) \d{x}
+ c\frac{\lambda^2}{|\ln \lambda|^2}.
\]
Combining this bound with the upper energy estimate from Subsection~\ref{subsec:cut-off}, Proposition~5.4 of \cite{DengRaduLamyConformalBimeron} implies that
\[
 \bs{\Psi}_k(z)
 = \Phi\!\left(e^{i \phi_k}
 \frac{(\cos\beta_k+\sin\beta_k)(z+z_k)-a_k(\cos\beta_k-\sin\beta_k)e^{i(\alpha_k+\phi_k)}}
 {(\cos\beta_k-\sin\beta_k)(z+z_k)+a_k(\cos\beta_k+\sin\beta_k)e^{i(\alpha_k+\phi_k)}}\right),
\]
where the parameters satisfy
\begin{equation} \label{eq:rigiditycharacterization}
     \Big|a_k - \tfrac{\lambda \, \eps_k}{ |\ln \lambda|}\Big| 
     \le c\,\tfrac{\lambda \, \eps_k}{|\ln \lambda|^{3/2}}, 
     \qquad 
     |\beta_k| \le \tfrac{\lambda}{\sqrt{|\ln\lambda|}}, 
     \qquad 
     |\alpha_k| \le \tfrac{c}{\sqrt{|\ln\lambda|}}.
\end{equation}
As a consequence,
\[
\int |\nabla \bs{\Psi}_k|^2 
= \int\!\left|\nabla \!\left(\frac{z+z_k-a_k}{z+z_k + a_k}\right) \right|^2 
+ \mathcal{O}\!\left(\tfrac{\lambda}{\sqrt{|\ln \lambda|}}\right).
\]
The error term in the last expression is negligible; therefore, the parameters $\beta_k$, $\alpha_k$, and $\phi_k$ may be set to zero without loss of generality.  
Relation \eqref{eq:rigiditycharacterization}, together with Propositions~\ref{prop:notzero} and~\ref{prop:notinfty}, implies $\tfrac{a_k}{R_k} \to a>0$.

To apply Lemma~\ref{lemma:neckharmonic}, suppose for contradiction that $\tfrac{|z_k|}{R_k} \to \infty$.  
Then Lemma~\ref{lemma:localenergy} yields
\[
\frac{1}{2}\int_{D_{R_kL}(z_k)} |\nabla \bs{\Psi}_k|^2 \d{x} 
= 4 \pi \frac{R_k^2L^2}{R_k^2L^2+a_k^2},
\]
and consequently
\[
\lim_{L \to \infty}\lim_{k \to \infty}
\frac{1}{2}\int_{D_{R_kL}(z_k)} |\nabla \bs{\Psi}_k|^2 \d{x} = 4 \pi.
\]
For $|z_k| < \rho$, the inclusion $D_{R_kL}(z_k) \subset \T^2 \setminus D_{R_kL}(0)$ holds, and therefore
\[
\frac{1}{2}\int_{D_{R_kL}(z_k)} |\nabla \m_k|^2 \d{x} 
\le \frac{1}{2}\int_{\T^2 \setminus D_{R_kL}} |\nabla \m_k|^2 \d{x} 
\le 4 \pi (1+c \lambda^2)-\tfrac{\delta_0^2}{2}
\le 4\pi - \tfrac{\delta_0^2}{4}
\]
for $\lambda_0>0$ sufficiently small.  
It follows that
\[
\lim_{L \to \infty}\lim_{k \to \infty}
\|\nabla \bs{\Psi}_k - \nabla \m_k\|_{L^2(D_{R_kL}(z_k))} 
\ge \tfrac{\delta_0^2}{8 \sqrt{\pi}},
\]
which contradicts \eqref{ineq:rigidity}.  
Hence, the assertion follows.
\end{proof}

The decay of the neck energy established in Lemma~\ref{lemma:energydecay} ensures compactness of the rescaled sequence and precludes the loss of degree through the neck region.  
This property permits passage to the limit and the proof of Theorem~\ref{thm:R2existence}.

\begin{proof}[Proof of Theorem~\ref{thm:R2existence}]
Let $(\tm_k)$ denote the rescaled sequence.  
Up to a subsequence, $\tm_k$ converges weakly in $H^2_{\loc}$ to a limit $\m$.  
Propositions~\ref{prop:notzero} and~\ref{prop:notinfty} imply
\[
\frac{R_k}{\eps_k} \to \frac{1}{\eps_0}
\]
for some $\eps_0>0$.  
Analogously to the argument in Proposition~\ref{prop:notzero}, the limit map $\m$ is a critical point of $E_{\lambda,\eps_0}$ on $\R^2$.  
Furthermore, the convergence $\omega(\m_k)\rightharpoonup4\pi\delta_0$ together with Lemma~\ref{lemma:energydecay} implies
\[
|\deg\m + 1|\le c\lambda^2,
\]
and consequently $\deg\m=-1$ for $\lambda_0>0$ sufficiently small.  
\end{proof}

\appendix
\section{Behavior on the annulus}

Let $g$ be defined as in \eqref{eq:trial} and \eqref{ineq:trial}, and set
\begin{align} \label{eq:radiusannulus}
r(|x|) = \frac{g'(|x|)}{1 - \sqrt{1 - g'(|x|)^2}} 
= \frac{1 + \sqrt{1 - g'(|x|)^2}}{g'(|x|)},
\end{align}
for $x \in D_{4} \setminus D_{2}$.  
There exists a constant $C>0$ such that
\begin{align*}
\frac{1}{r(|x|)^{2}} \le |x|^{2}
\quad \text{and} \quad
\frac{r'(|x|)}{r(|x|)^{2}} \le \frac{C}{|x|^{2}}
\end{align*}
for all $x \in D_4 \setminus D_2$.

Indeed, for $s \in (2,4)$, the first inequality follows from \eqref{ineq:trial}:
\begin{align*}
r(s)
=  \frac{1+\sqrt{1-g'(s)^2}}{g'(s)} 
\geq \frac{1+\sqrt{1- \tfrac{4s^2}{(1+s^2)^2}}}{\tfrac{2s}{1+s^2}}
= \frac{1}{s}.
\end{align*}
Regarding the derivative, differentiation of \eqref{eq:radiusannulus} yields
\begin{align*}
r'(s) = - \frac{1+ \sqrt{1-g'(s)^2} }{\sqrt{1-g'(s)^2}\, g'(s)^2}\, g''(s),
\end{align*}
and consequently,
\begin{align*}
\frac{r'(s)}{r(s)^2} 
&= \frac{ \left(1+ \sqrt{1-g'(s)^2}\right)/g'(s)^2}{
\sqrt{1-g'(s)^2}\left(\sqrt{1-g'(s)^2} +1\right)^2/g'(s)^2} \, (-g''(s)) \\
&= -\frac{g''(s)}{\sqrt{1-g'(s)^2}\left(\sqrt{1-g'(s)^2} +1\right)}  
\le \frac{C}{s^2}.
\end{align*}

\begin{lemma}\label{lemma:mRBounds}
Let $\m_{R,a} = \Phi(u_{R,a})$ be constructed as in Subsection~\ref{subsec:cut-off}.  
Then there exists a constant $C>0$ such that
\[
|\nabla \m_{R,a}(x)|^2 \le C \frac{a^2}{|x|^4}
\quad \text{and} \quad 
(\m_{R,a})_3(x)^2 \le C\frac{a^2}{|x|^2}
\]
for all $x \in D_{2R} \setminus D_R$.
\end{lemma}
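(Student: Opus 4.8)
The plan is to reduce both pointwise bounds to a single explicit computation of the energy density of $\m_{R,a} = \Phi(u_{R,a})$ and then feed in the rescaled annulus estimates for the radial profile $r_R$. Writing $f_{R,a}(z) = \frac{w-a}{w+a}$ with $w = r_R(|z|)\,\frac{z}{|z|}$, I set $\rho = r_R(|z|)$ and $\theta = \arg z$, so that $w = \rho\,e^{i\theta}$ and $|w| = \rho$. Because stereographic projection is conformal, one has the pointwise identities
\[
|\nabla \m_{R,a}|^2 = \frac{4\,|\nabla f_{R,a}|^2}{(1 + |f_{R,a}|^2)^2},
\qquad
(\m_{R,a})_3 = \frac{|f_{R,a}|^2 - 1}{|f_{R,a}|^2 + 1},
\]
so that everything is governed by $f_{R,a}$ and its gradient, the only relevant derivative being $\frac{df}{dw} = \frac{2a}{(w+a)^2}$ together with the chain rule for $w$ as a function of $z$.

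First I would differentiate $w$ in polar coordinates, $\partial_{|z|} w = r_R'(|z|)\,e^{i\theta}$ and $\partial_\theta w = i\,w$, and read off, using $|\nabla f_{R,a}|^2 = |\partial_{|z|} f_{R,a}|^2 + |z|^{-2}|\partial_\theta f_{R,a}|^2$,
\[
|\nabla f_{R,a}|^2
= \frac{4a^2}{|w+a|^4}\left( r_R'(|z|)^2 + \frac{\rho^2}{|z|^2} \right).
\]
Inserting $1 + |f_{R,a}|^2 = \frac{|w-a|^2 + |w+a|^2}{|w+a|^2} = \frac{2(\rho^2+a^2)}{|w+a|^2}$ makes the factor $|w+a|^{-4}$ cancel, leaving the clean expression
\[
|\nabla \m_{R,a}|^2
= \frac{4a^2\left( r_R'(|z|)^2 + \rho^2/|z|^2 \right)}{(\rho^2 + a^2)^2}.
\]
The same bookkeeping, now with $|w-a|^2 - |w+a|^2 = -4a\,\Re w = -4a\rho\cos\theta$, gives $(\m_{R,a})_3 = -\frac{2a\rho\cos\theta}{\rho^2 + a^2}$.

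It then remains to estimate these two explicit quantities on the annulus. Bounding $(\rho^2 + a^2)^2 \ge \rho^4$ in both denominators and $\cos^2\theta \le 1$ yields
\[
(\m_{R,a})_3^2 \le \frac{4a^2}{\rho^2},
\qquad
|\nabla \m_{R,a}|^2 \le 4a^2\left( \frac{r_R'(|z|)^2}{\rho^4} + \frac{1}{|z|^2\,\rho^2} \right).
\]
At this point I would invoke the annulus estimates established in the appendix, rescaled by $r_R(s) = R\,r(s/R)$, namely $\rho = r_R(|z|) \ge |z|$ (equivalently $\rho^{-2} \le |z|^{-2}$) and $r_R'(|z|)/r_R(|z|)^2 \le C/|z|^2$. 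The first gives $(\m_{R,a})_3^2 \le 4a^2/|z|^2$; writing $r_R'^2/\rho^4 = (r_R'/\rho^2)^2$ for the first gradient term and $1/(|z|^2\rho^2) \le 1/|z|^4$ for the second, both appendix estimates give $|\nabla \m_{R,a}|^2 \le C a^2/|z|^4$. This delivers the two claimed bounds with a single constant.

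The computation is elementary; the only points requiring care are the correct power-counting in the chain rule, where the angular derivative $\partial_\theta w = iw$ produces the dominant term $\rho^2/|z|^2$, and the consistent rescaling of the two appendix inequalities, since the powers of $R$ must cancel exactly to yield $|z|$-homogeneous bounds. I expect the main (though minor) obstacle to be verifying that the rescaled derivative estimate $r_R'(|z|)/r_R(|z|)^2 \le C/|z|^2$ follows from the unscaled one with the stated power of $R$; once this is in place, both inequalities are immediate upon discarding the $a^2$ in the denominators.
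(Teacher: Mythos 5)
Your proposal is correct and follows essentially the same route as the paper's proof: the chain rule in polar coordinates applied to the composition with $w=r_R(|z|)\,z/|z|$, the cancellation coming from the explicit M\"obius density $a^2/(\rho^2+a^2)^2$, and the two appendix estimates $1/r_R(s)^2\lesssim 1/s^2$ and $r_R'(s)/r_R(s)^2\lesssim 1/s^2$. The only differences are cosmetic (you work with the complex field $f_{R,a}$ rather than the real field $u_{R,a}$, and your intermediate inequality $r_R(|z|)\ge|z|$ holds only up to a harmless multiplicative constant).
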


\begin{proof}
Relation \eqref{eq:radiusannulus} implies, for $r_R(s):=R\, r(\tfrac{s}{R})$ and $R \le s \le 2R$,
\[
\frac{r_R'(s)}{r_R(s)^2} \lesssim \frac{1}{s^2},
\qquad
\frac{1}{r_R(s)^2} \lesssim \frac{s^2}{R^4} \lesssim \frac{1}{s^2}.
\]
Since $|\nabla \m_{R,a}|^2 = 4\frac{|D u_{R,a}|^2}{(1+|u_{R,a}|^2)^2}$ and 
$u_{R,a}(x) = u\!\left(r_R(|x|)\tfrac{x}{|x|}\right)$,
application of the chain rule and \eqref{eq:radiusannulus} yields
\begin{align*}
|\nabla \m_{R,a}|^2 
&\le C \frac{|Du|^2}{(1+|u|^2)^2} 
\bigg|_{r_R(|x|)\tfrac{x}{|x|}}  
\left( r_R'(|x|)^2 + \frac{r_R(|x|)^2}{|x|^2} \right) \\
&\le C \frac{a^2}{(r_R(|x|)^2+a^2)^2}
\left( r_R'(|x|)^2 + \frac{r_R(|x|)^2}{|x|^2} \right) \\
&\le C a^2 
\left( \frac{r_R'(|x|)^2}{r_R(|x|)^4} 
+ \frac{1}{r_R(|x|)^2|x|^2} \right)
\le C \frac{a^2}{|x|^4}. 
\end{align*}
The second estimate follows analogously:
\begin{align*}
(\m_{R,a})_3(x)^2 
&= \frac{(|u_{R,a}(x)|^2-1)^2}{(|u_{R,a}(x)|^2+1)^2}
\le 4 a^2\frac{r_R(|x|)^2}{(r_R(|x|)^2+a^2)^2}
\le 4 \frac{a^2}{r_R(|x|)^2}
\le C\frac{a^2}{|x|^2}.
\end{align*}
The assertion is thus established.
\end{proof}

\section{Energy in complex coordinates}\label{sec:proofdensitylift}
For the lift $\m = \Phi(u)$ it follows from conformality of the stereographic map $\Phi$. 
\begin{align*}
|D\m|^2 =  \frac{4|Du|^2}{(|u|^2+1)^2}  .
\end{align*}
Furthermore, we have $m = \frac{2u}{|u|^2+1}$ and $m_3= \frac{|u|^2-1}{|u|^2+1}$, and, therefore, 
\begin{align*}
(\nabla \cdot m) m_3 
&= 2\left(\nabla \cdot  \frac{u}{|u|^2+1} \right)\frac{|u|^2-1}{|u|^2+1}  \\
&= 2\left[  \frac{(\nabla \cdot u)(|u|^2-1)}{(|u|^2+1)^2} - \frac{(|u|^2-1)(u \cdot \nabla )|u|^2}{(|u|^2+1)^3}    \right] \\
&= 2\left[ -  \frac{(\nabla \cdot u)}{(|u|^2+1)^2}+ {\frac{(\nabla \cdot u)|u|^2-(u \cdot \nabla)|u|^2}{(|u|^2+1)^2}}  + 2\frac{(u \cdot \nabla) |u|^2}{(|u|^2+1)^3}  \right] \\
&= - 2\frac{(\nabla \cdot u)}{(|u|^2+1)^2}  + \nabla \cdot \frac{2|u|^2 u }{(|u|^2+1)^2}  . 
\end{align*}
In the second last step we used
\begin{align*}
\frac{(\nabla \cdot u) |u|^2 - (u \cdot \nabla)|u|^2  }{(|u|^2+1)^2} = u \cdot \nabla \frac{1}{(|u|^2+1)^2} + \nabla \cdot \frac{|u|^2 u }{(|u|^2+1)^2}
\end{align*}
which follows immediately from the product rule and the chain rule 
\begin{align*}
u \cdot \nabla \frac{1}{(|u|^2+1)^2} + \nabla \cdot \frac{|u|^2 u }{(|u|^2+1)^2} &= u \cdot {\nabla \frac{1}{|u|^2+1} }+ \frac{(\nabla \cdot u) |u|^2  }{(|u|^2+1)^2} \\
&=  \frac{(\nabla \cdot u) |u|^2 - (u \cdot \nabla)|u|^2  }{(|u|^2+1)^2} .
\end{align*}

\bibliographystyle{abbrv}
\bibliography{bibliography.bib}
\end{document}